\newtheorem{proposition}{Proposition}[section]
\newtheorem{lemma}[proposition]{Lemma}
\newtheorem{theorem}[proposition]{Theorem}
\newtheorem{definition}[proposition]{Definition}
\newtheorem{corollary}[proposition]{Corollary}
\def\l{{\langle}}
\def\r{\rangle}
\def\dim{{\rm dim}}
\def\R{{\mathbb R}}
\def\p{{\mathbb R}^p}
\def\d{{\mathbb R}^d}
\def\a{\alpha}
\def\be{\beta}
\def\la{\lambda}
\def\De{\Delta}
\def\de{\delta}
\def\ga{\gamma}
\def\ep{\varepsilon}
\def\si{\sigma}
\def\X{{\bf X}}
\def\E{{\mathbb E}}
\def\P{{\mathbb P}}
\def\N{{\mathbb R}^N}
\def\Var{{\rm Var}}
\def\Cov{{\rm Cov}}
\def\Gr{{\rm Gr}}
\def\msl{{\rm l.i.m.}}
\def\to{\rightarrow}
\def\({\big(}
\def\){\big)}
\makeatletter \@addtoreset{equation}{section} \makeatother
\newcommand {\qed}%
{%
    {}\hfill
    {}\hfill
    {$\square $}%
    \vspace {0.3cm}%
    \pagebreak [2]%
    \par
}%
\newenvironment{proof}[1]{%
    \vspace{0.3cm}%
    \pagebreak [2]%
    \par%
    \noindent {\bf  Proof~#1\ }}{\qed}%
\newenvironment{remark}{%
    \vspace{0.3cm} \pagebreak [2]%
    \par%
    \refstepcounter{proposition}
    \noindent%
   {\bf Remark~\theproposition\  }}{\ }%
\begin{document}
%

%\begin{titlepage}

\title{Fractal and Smoothness Properties of Space-Time Gaussian Models}
\author{Yun Xue and Yimin Xiao \footnote{Research partially
        supported by  NSF grant DMS-0706728.}
\\Department of Statistics and Probability
\\Michigan State University\\}

\maketitle

\begin{abstract}

Spatio-temporal models are widely used for inference in statistics
and many applied areas. In such contexts interests are often in
the fractal nature of the sample surfaces and in the rate of
change of the spatial surface at a given location in a given
direction. In this paper we apply the theory of Yaglom (1957) to
construct a large class of space-time Gaussian models with
stationary increments, establish bounds on the prediction errors
and determine the smoothness properties and fractal properties of
this class of Gaussian models. Our results can be applied directly
to analyze the stationary space-time models introduced by Cressie
and Huang (1999), Gneiting (2002) and Stein
(2005), respectively.
\end{abstract}

{\sc Running head}: Fractal and Smoothness Properties of
Space-Time
Gaussian Models.\\

{\sc 2000 AMS Classification numbers}: 62M40, 62M30, 60G15, 60G17,
60G60, 28A80.\\

{\sc Key words:} Space-time models, anisotropic Gaussian fields,
prediction error, mean square differentiability, sample path
differentiability, Hausdorff dimension.

\section{Introduction}

Spatio-temporal models are widely used for inference in
statistics and many applied areas such as meteorology,
climatology, geophysical science, agricultural sciences,
environmental sciences, epidemiology, hydrology.
%The books of Cressie  (1993),
%Stein (1999), Christako (2000) contain and
Such models presume, on $\d \times \R$, where $d$ is the spatial
dimension, a collection of random variables $X(x,t)$ at location
$x$ and time $t$. The family $\{X(x,t): (x,t)\in\d\times\R\}$ is
referred to as a spatio-temporal random field or a space-time
model.
%The random field might stand for atmospheric
%pollutant concentrations, yields of a product, temperature or wind
%speed. More often than not, it could also denote a transformed
%field or a residual field, rather than the original random field.
Many authors have constructed various stationary space-time models
and the topic has been under rapid development in recent years.
See, for example, Jones and Zhang (1997), Cressie and Huang
(1999),  de Iaco, Myers and Posa (2001, 2002,
2003), Gneiting (2002), Gneiting, \emph{et al.} (2009),
Kolovos, \emph{et al.} (2004), Kyriakidis and
Journel (1999), Ma (2003a, 2003b, 2004, 2005a, 2005b, 2007, 2008),
Stein (2005) and their combined references for further
information on constructions of space-time models and
their applications.

There has also been increasing demand for non-stationary
space-time models. For example, in the analysis of spatio-temporal
data of environmental studies, sometimes there is little reason to
expect stationarity under the spatial covariance structures, and
it is more advantageous to have space-time models whose
variability changes with location and/or time. %Recent literature
%includes a number of examples of data addressing such
%nonstationarity.
Henceforth, the construction of nonstationary space-time models
has become an attractive topic and several approaches have been
developed recently. These include to deform the coordinates
of an isotropic and stationary random field to obtain a rich class
of nonstationary random fields [see Schmidt and O'Hagan (2003),
Anderes and Stein (2008)], or to use convolution-based methods
[cf. Higdon, Swall and Kern (1999), Higdon (2002), Paciorek and
Schervish (2006), Calder and Cressie (2007)] or spectral methods
[Fuentes (2002, 2005)].
%Escoufier, Camps and Gonzalez (1984) and Chami
%and Gonzalez (1984) suggest that stationarity should not be
%assumed in the study of atmospheric pollution data. Loader and
%Switzer (1992) studied nonstationarity structure in monthly
%monitoring data on sulfate concentration in rain.
%Braud (1990) has addressed nonstationarity in monthly summaries
%of mean sea surface temperature.
%Higdon, Swall and Kern (1999) showed that
%nonstationary covariance function can be obtained by convolving
%spatially-varying kernel functions. Based on their method,
%Paciorek and Schervish (2006) constructed a class of nonstationary
%covariance functions which includes a nonstationary version of the
%Mat\'ern covariance function and they employed this new
%nonstationary covariance in a fully Bayesian model.

In this paper, we apply the theory of Yaglom (1957) to
construct a class of space-time Gaussian models with stationary
increments and study their statistical and geometric properties.
The main feature of this class of space-time models is that
they are anisotropic in time and space, and may have different
smoothness and geometric properties along different directions.
Such properties make them potentially useful as stochastic models
in various areas. By applying tools from Gaussian random fields,
fractal geometry and Fourier analysis, we derive upper and lower
bounds for the prediction errors, establish criteria for the
mean-square and sample path differentiability and determine
the Hausdorff dimensions of the sample surfaces, all in terms of
the parameters of the models explicitly. Our main results show
that the statistical and geometric properties of the Gaussian random
fields in this paper are very different from those obtained
by deformation from any isotropic random field. It is also
worth to mention that the method in this paper may be applied
to analyze more general Gaussian intrinsic random functions, 
convolution-based space-time Gaussian models [Higdon (2002), 
Calder and Cressie (2007)] and the spatial processes in Fuentes 
(2002, 2005).

The rest of this paper is organized as follows. In Section 2 we
construct a class of space-time Gaussian models with stationary
increments by applying the theory of Yaglom (1957). Then we
establish upper and lower bounds for the prediction errors
of this class of models in Section 3. In Section 4 we consider
smoothness properties of the models and establish explicit criteria
for the existence of mean square directional derivatives,
mean square differentiability and sample path continuity of
partial derivatives.  In Section 5 we look into the fractal
properties of these models and determine the Hausdorff dimensions
of the range, graph and level sets. In Section 6, we apply the main
results to some stationary space-time models, such as those
constructed by Cressie and Huang (1999), Gneiting (2002) and
Stein (2005). Finally, in Section 7, we provide proofs of
our results.

We end the Introduction with some notation. Throughout this paper,
instead of using space-time parameter space $\R^d \times \R$,
we take the parameter space as $\R^N$ or $\R^N_+ = [0,
\infty)^N$. We use $|\cdot|$ to denote the Euclidean norm
in $\R^N$. The inner product in $\R^N$ is denoted by
$\l \cdot, \cdot\r$. A typical parameter, $t\in\R^N$ is
written as $t = (t_1, \ldots, t_N)$. For any $s, t \in \R^N$
such that $s_j < t_j$ ($j = 1, \ldots, N$), $[s, t] =
\prod^N_{j=1}\, [s_j, t_j]$ is called a
closed interval (or a rectangle).

We will use $c, c_1, c_2, \ldots,$ to denote unspecified
positive and finite constants which may not be the same
in each occurrence.

\section{Anisotropic Gaussian models with stationary increments}

We consider a special class of {\it intrinsic random functions};
namely, space-time models with stationary increments. We will
further restrict ourselves to Gaussian random fields for which
powerful general Gaussian principles can be applied. Many of
the results in this paper can be extended to non-Gaussian
space-time models (such as stable or more general infinitely
divisible random fields), but their proofs require
different methods and go beyond the scope of this paper.
One can find some information for stable random fields
in Xiao (2008).

Throughout this paper, $X=\{X(t),t\in\N\}$ is a real-valued,
centered Gaussian random field with $X(0)=0$. We assume
that $X$ has stationary increments
and continuous covariance function $C(s,t)=\E[X(s)X(t)]$.
According to Yaglom (1957), $C(s,t)$ can be represented as
\begin{equation}\label{Eq:covar}
C(s,t)=\int_{\N}\(e^{i\l s,\la\r}-1\)\(e^{-i\l t,\la\r}-1\)F(d\la)
+\l s,Qt\r,
\end{equation}
where $Q$ is an $N\times N$ non-negative definite matrix and
$F(d\la)$ is a nonnegative symmetric measure on $\N\setminus\{0\}$
satisfying
\begin{equation}\label{Eq:int}
\int_{\N}\frac{|\la|^2}{1+|\la|^2}F(d\la)<\infty.
\end{equation}
In analogy to the stationary case, the measure $F$ is called
the \textit{spectral measure} of X. If $F$ is absolutely
continuous with respect to the Lebesgue measure in $\R^N$,
its density $f$ will be called the spectral density of $X$.

It follows from (\ref{Eq:covar}) that $X$ has the following
stochastic integral representation:
\begin{equation}\label{Eq:eks}
X(t)\stackrel {d} {=}\int_{\N}\(e^{i\l t,\la\r}-1\)W(d\la)+\l Y,t\r,
\end{equation}
where $X_1 \stackrel {d} {=} X_2$ means the processes $X_1$ and
$X_2$ have the same finite dimensional distributions, $Y$ is an
$N$-dimensional Gaussian random vector with mean 0 and covariance
matrix $Q$, $W(d\la)$ is a centered complex-valued Gaussian random
measure which is independent of $Y$ and satisfies
\[
\E\Big(W(A)\overline{W(B)}\Big)=F(A\cap B)\quad
 \textup{and}\quad W(-A)=\overline{W(A)}
\]
for all Borel sets $A, B\subseteq \N$. The spectral measure $F$ is
called the \textit{control measure} of $W$. Since the linear term
$\l Y,t\r$ in (\ref{Eq:eks}) will not have any effect on the
problems considered in this paper, we will from now on assume
$Y=0$. This is equivalent to assuming $Q=0$ in (\ref{Eq:covar}).
Consequently, we have
\begin{equation}\label{Eq:sigmaa}
v(h)\triangleq\E\big(X(t+h)-X(t)\big)^2=
2\int_{\N}\(1-\cos\l h,\la\r\)F(d\la).
\end{equation}
It is important to note that $v(h)$, called \textit{variogram} in
spatial statistics, is a negative definite function in the sense
of I. J. Schoenberg, which is determined by the spectral
measure $F$. See Berg and Forst (1975) for more information on
negative definite functions.

The above shows that various centered Gaussian random
fields with stationary increments can be constructed by
choosing appropriate spectral measures $F$. For the well
known fractional Brownian motion $B^H= \{B^H(t), t \in \R^N\}$
of Hurst index $H \in (0, 1)$, its spectral measure has a 
density function
\[
f_H(\la) = c(H, N) \frac 1 {|\la|^{2H + {N} }},
\]
where $c(H, N)>0$ is a normalizing constant such that
$v(h)= |h|^{2H}$. Since $v(h)$ depends on $|h|$ only,
$B^H$ is isotropic. Other examples of isotropic
Gaussian fields with stationary increments can
be found in Xiao (2007). We also remark that all
centered stationary Gaussian random fields can be treated using 
the above framework. In fact, if $Y = \{Y(t), t \in \R^N\}$
is a centered stationary Gaussian random field, it can
be represented as $Y(t)=\int_{\N}e^{i\l t,\la\r}\, W(d\la)$.
Thus the random field $X$ defined by
$$X(t)=Y(t)-Y(0)=\int_{\N}\(e^{i\l t,\la\r}-1\)\,W(d\la),
\quad \forall\ t \in \R^N
$$
is Gaussian with stationary increments and $X(0)=0$. Note
that the spectral measure $F$ of $X$ in the sense of
(\ref{Eq:sigmaa}) is the same as the spectral measure
[in the ordinary sense] of the stationary random 
field $Y$.

In the following, we propose and investigate
a class of centered, anisotropic Gaussian random
fields with stationary increments,
whose spectral measures are absolutely continuous
with respect to the Lebesgue measure in $\R^N$.
More precisely, we assume that the spectral measure
$F$ of $X=\{X(t), t\in \N\}$ is absolutely continuous
with density function $f(\la)$ which satisfies (\ref{Eq:int})
and the following condition:
\begin{itemize}
\item[(C)]\ There exist positive constants $c_1$,
$c_2$, $c_3 $, $\ga $ and $(\be_1,\cdots,\be_N)\in
(0,\infty)^N$ such that
\begin{equation}\label{Eq:leg}
\ga >\sum_{j=1}^N\frac{1}{\be_j}
\end{equation}
and
\begin{equation}\label{Eq:Maternb}
\frac{c_1}{\(\sum_{j=1}^N{|\la_j|}^{\be_j}\)^{\ga}}
\leq f(\la)\leq\frac{c_2}{\(\sum_{j=1}^N{|\la_j|}^{\be_j}\)^{\ga}},
\qquad  \forall \la\in \N \hbox{ with }\ |\la|\ge c_3.
\end{equation}
\end{itemize}

The following proposition  shows that (\ref{Eq:leg}) ensures
$f$ is a legitimate spectral density function.
\begin{proposition}\label{Le:int}
Assume that $f(\la)$ is a non-negative measurable function
defined on $\N$. If
$$\int_{|\la|\leq 1}|\la|^2 f(\la)d\la<\infty $$
and (\ref{Eq:Maternb}) holds,
then $f(\la)$ is a legitimate spectral density
if and only if the parameters $\ga$ and $\be_j$
for $j=1,\cdots,N$ satisfy (\ref{Eq:leg}).
\end{proposition}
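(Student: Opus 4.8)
The plan is to recall that, by \eqref{Eq:int}, $f$ is a legitimate spectral density if and only if $\int_{\N}\frac{|\la|^2}{1+|\la|^2}f(\la)\,d\la<\infty$, and to split this integral over the region $\{|\la|\le c_3\}$ and the region $\{|\la|\ge c_3\}$. On the bounded region the hypothesis $\int_{|\la|\le 1}|\la|^2 f(\la)\,d\la<\infty$ together with local integrability handles the near-origin behavior, so the whole question reduces to the convergence of $\int_{|\la|\ge c_3}|\la|^{-0}\cdots$, more precisely of
\[
I:=\int_{|\la|\ge c_3}\frac{1}{\(\sum_{j=1}^N|\la_j|^{\be_j}\)^{\ga}}\,d\la,
\]
since on that region $\frac{|\la|^2}{1+|\la|^2}$ is bounded between two positive constants and $f(\la)$ is sandwiched by \eqref{Eq:Maternb}. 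Thus $f$ is legitimate iff $I<\infty$, and it suffices to determine for which $\ga,\be_j$ the integral $I$ converges.

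To evaluate $I$ I would use the anisotropic scaling that is natural for the quasi-norm $\rho(\la)=\sum_{j=1}^N|\la_j|^{\be_j}$. Introduce the change of variables $\la_j = r^{1/\be_j}\mu_j$ for $r>0$, so that $\rho(\la)=r\,\rho(\mu)$; the Jacobian of $\la\mapsto(r,\mu)$ contributes a factor $r^{\sum_j 1/\be_j - 1}$ (up to the surface measure on the "unit sphere" $\{\rho(\mu)=1\}$, which has finite total mass). Writing $Q:=\sum_{j=1}^N\frac1{\be_j}$, this turns the tail integral into, schematically,
\[
I \asymp \int_{1}^{\infty} r^{Q-1}\, r^{-\ga}\,dr = \int_1^\infty r^{Q-1-\ga}\,dr,
\]
which converges if and only if $Q-1-\ga<-1$, i.e. $\ga>Q=\sum_{j=1}^N 1/\be_j$. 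Conversely, the same computation with the lower bound in \eqref{Eq:Maternb} shows $I=\infty$ when $\ga\le Q$, giving the "only if" direction. Combining the two regions yields the claim.

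The routine parts are the verification that the anisotropic polar-type change of variables is valid and that the "sphere" $\{\sum_j|\mu_j|^{\be_j}=1\}$ carries finite measure in the relevant sense; these are standard but must be done carefully because the $\be_j$ need not be integers, so one works with the dilations $\de_r(\la)=(r^{1/\be_1}\la_1,\dots,r^{1/\be_N}\la_N)$ and integrates in "shells" $\{2^k\le\rho(\la)<2^{k+1}\}$, on each of which Lebesgue measure scales like $2^{kQ}$ and the integrand like $2^{-k\ga}$, reducing the question to convergence of $\sum_k 2^{k(Q-\ga)}$. The only genuine subtlety — and the step I expect to need the most care — is making sure the contribution near the origin is truly harmless: one must check that the behavior of $f$ on $c_3\le|\la|$ is decoupled from its behavior on $|\la|<c_3$, using only the two stated integrability hypotheses, so that the dichotomy in $\ga$ is driven entirely by the tail integral $I$ and not contaminated by any singularity of $f$ at $0$.
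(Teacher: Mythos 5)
Your proposal is correct, and the first half coincides with the paper: both reduce the question, via \eqref{Eq:int} and the given integrability near the origin, to deciding when the tail integral $I=\int_{|\la|\ge c_3}\rho(\la)^{-\ga}\,d\la$ with $\rho(\la)=\sum_{j=1}^N|\la_j|^{\be_j}$ converges. Where you genuinely differ is in evaluating $I$. The paper uses the elementary one--dimensional identity \eqref{Eq:Elem}, $\int_0^\infty (a+x^{\be})^{-\ga}dx = c\, a^{-(\ga-1/\be)}$ when $\be\ga>1$ (and $=+\infty$ otherwise), and integrates out the coordinates one at a time, lowering the exponent by $1/\be_j$ at each step; for necessity it locates the first index $\tau$ with $\ga\le\sum_{i\le\tau}1/\be_i$ and exhibits divergence after integrating the first $\tau$ variables. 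Your anisotropic--dilation/dyadic--shell argument is a legitimate alternative and arguably cleaner here: since $\la_j\mapsto r^{1/\be_j}\la_j$ sends $\rho$ to $r\rho$ with Jacobian $r^{Q}$, $Q=\sum_j 1/\be_j$, each shell $\{2^k\le\rho<2^{k+1}\}$ contributes $\asymp 2^{k(Q-\ga)}$, and both directions of the equivalence fall out of a single geometric series; the routine checks you flag (the reference shell $\{1\le\rho<2\}$ is bounded with positive finite measure, and $\{|\la|\ge c_3\}$ differs from $\{\rho(\la)\ge c\}$ by a bounded set on which the integrand is bounded) all go through. What the paper's route buys is reusability: the same lemma \eqref{Eq:Elem} is invoked repeatedly later, e.g.\ in the proofs of Lemma \ref{Le:St} and Theorem \ref{Th:spdiff}. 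One small caveat shared by your argument and the paper's: the hypothesis controls $\int_{|\la|\le 1}|\la|^2 f$ while \eqref{Eq:Maternb} constrains $f$ only on $|\la|\ge c_3$, so if $c_3>1$ nothing is assumed on the annulus $1<|\la|<c_3$; one needs the normalization $c_3=1$ adopted in Remark 2.2 (or an extra local integrability assumption there) for the reduction to the tail to be complete.
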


Some remarks about condition (\ref{Eq:Maternb}) are in the following.
\begin{remark}
\begin{itemize}
\item There is an important connection between the random
field models that satisfy Condition (C) and those considered
in Xiao (2009). For $j=1,\cdots, N$, let
\begin{equation}\label{Eq:Hs}
H_j=\frac{\be_j}{2}\bigg(\ga-\sum_{i=1}^N \frac{1}{\be_i}\bigg)
\end{equation}
and let $Q=\sum_{j=1}^N \frac{1}{H_j}$. Then (\ref{Eq:Maternb})
can be rewritten as
\begin{equation}\label{Eq:aniMat}
\frac{c_4}{\(\sum_{j=1}^N{|\la_j|}^{H_j}\)^{2+Q}}
\leq f(\la)\leq\frac{c_5}{\(\sum_{j=1}^N{|\la_j|}^{H_j}\)^{2+Q}},
\qquad \forall \ \la\in \N \hbox{ with }\, |\la|\ge c_3,
\end{equation}
where the positive and finite constants $c_4$ and $c_5$
depend on $N$, $c_1, c_2$, $\beta_j$ and $\ga$ only.
%The corresponding Gaussian field $X$ here is
%named \textit{anisotropic Mat\'ern-type model}.\\
To verify this claim, we will make use of the following
elementary fact: For any positive numbers $N$ and $q$,
there exist positive and finite constants $c_4$ and $c_5$
such that
\[
c_4 \bigg(\sum_{j=1}^N a_j\bigg)^{q}  \le \sum_{j=1}^N a_j^{q}
\le c_5 \bigg(\sum_{j=1}^N a_j\bigg)^{q}\]
for all non-negative numbers $a_1, \ldots, a_N$. Note that
\[
\bigg(\sum_{j=1}^N{|\la_j|}^{H_j}\bigg)^{2+Q} =
\bigg(\sum_{j=1}^N{|\la_j|}^{\beta_j \cdot
\frac 1 2 (\gamma - \sum_{i=1}^N \frac{1}{\be_i})}\bigg)^{2+Q}
\]
and $\frac 1 2 (\gamma - \sum_{i=1}^N \frac{1}{\be_i})(2+Q)
= \gamma$. We see that (\ref{Eq:Maternb}) and (\ref{Eq:aniMat})
are equivalent.

In turns out that the expression (\ref{Eq:aniMat}) is essential
in this paper and will be used frequently. For simplicity of
notation, from now on we take $c_3=1$.

\item It is also possible to consider Gaussian random
fields with stationary increments whose spectral measures
are not absolutely continuous. Some examples of such 
covariance space-time models can be found in Cressie and 
Huang (1999), Gneiting (2002), Ma (2003a, 2003b). Since 
the mathematical tools for studying such random fields are 
quite different, we will deal with them systematically in 
a subsequent paper.

\item Nonstationary Gaussian random fields can be constructed
through deformation of an isotropic Gaussian random field.
Refer to Anderes and Stein (2008) for more details. One of the
advantages of deformation is to closely connect a nonstationary
and/or anisotropic random field to a stationary and isotropic
one for which the existing statistical techniques are available.
However, there is also a disadvantage [from the point of
view of flexibility] associated with deformation.
Let $X(t)=Z(g^{-1}(t))$, where $\{Z(t), t \in \R^N\}$
is an isotropic Gaussian model and $g$ is a smooth bijection
of $\R^N$. Since the function $g$ is bi-Lipschitz
on compact intervals, the fractal dimensional properties of $X$
are the same as those of $Z$. Hence deformation of isotropic
Gaussian models will not generate anisotropic random fields
with rich geometric structures as shown by the models introduced
in this paper.
\end{itemize}
\end{remark}

\section{Prediction error of anisotropic Gaussian models}

Suppose we observe an anisotropic Gaussian random field $X$ on
$\N$ at $t^1,\ldots,t^n$ and wish to predict $X(u)$, for $u\in\N$.
Then the inference about $X(u)$ will be based upon the conditional
distribution of $X(u)$ given the observed values of
$X(t^1),\ldots,X(t^n)$. Refer to Stein (1999, Section 1.2)
for the closed form of this conditional distribution. A statistical
analysis typically aims at the optimal linear predictor of this
unobserved $X(u)$, known as \textit{simple kriging}. The simple
kriging predictor of $X(u)$ is
    \begin{equation}\label{Eq:krig}
     X^*(u)={\bf c}(u)^T\boldsymbol\Sigma^{-1}{\bf Z},
    \end{equation}
where ${\bf Z}=\(X(t^1),\ldots,X(t^n)\)^T$, ${\bf
c}(u)^T=\Cov\{X(u),{\bf Z}\}$ and $\boldsymbol\Sigma=\Cov({\bf
Z},{\bf Z}^T)$. The form (\ref{Eq:krig}) minimizes the mean square
prediction error, which then is given as $\Var (X(u))-{\bf
c}(u)^T\boldsymbol\Sigma^{-1}{\bf c}(u)$.
Since $X$ is Gaussian, the simple kriging is the conditional
expectation of $X(u)$ given ${\bf Z}$, and the mean square
prediction error is the conditional variance of $X(u)$
given ${\bf Z}$.

The main result of this section is Theorem \ref{Th:pred-error}
below, which gives lower and upper bounds for the mean
square prediction error for Gaussian random
fields with stationary increments which satisfy Condition (C).
It shows that, similar to stationary Gaussian field models
[cf. Stein (1999)], the prediction error of the models in
this paper only depends on the high frequency
behavior of the spectral density of $X$.

\begin{theorem}\label{Th:pred-error}
Let $X=\{X(t),t\in \N\}$ be a centered Gaussian random field
valued in $\R$ with stationary increments and spectral density
$f(\la)$ satisfying
(\ref{Eq:Maternb}). Then there exist constants $c_6>0$ and $c_7>0$,
such that for all integers $n\geq 1$ and all $u,t^1,\cdots,t^n\in \N,$\\
\begin{equation}\label{Eq:C3'}
c_6\min_{0\leq k\leq n}\sum_{j=1}^N|u_j-t^k_j|^{2H_j}
\leq\Var\(X(u)|X(t^1),\cdots,X(t^n)\)\leq
c_7\min_{0\leq k\leq n}\sum_{j=1}^N \sigma_j\big(|u_j-t^k_j|\big),
\end{equation}
where $H_j$ is given in (\ref{Eq:Hs}), $t^0=0$ and $\sigma_j: \R_+\to \R_+$
is defined by
\begin{equation}\label{Def:sigma}
\sigma_j(r) = \left\{\begin{array}{ll}
r^{2H_j}\qquad \quad &\hbox{ if }\ 0 < H_j < 1,\\
r^2|\log r| &\hbox{ if }\ H_j =1,\\
r^2 &\hbox{ if }\ H_j >1.
\end{array}
\right.
\end{equation}
\end{theorem}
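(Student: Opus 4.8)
The plan is to rephrase the conditional variance as a distance in the reproducing kernel Hilbert space (equivalently, as a best linear prediction error) and then estimate it from above and below using the spectral representation (\ref{Eq:eks}) with $Y=0$. Write $\mathcal{H}$ for the $L^2$-space spanned by $\{X(t): t \in \N\}$; then $\Var(X(u)\mid X(t^1),\dots,X(t^n))$ equals the squared $\mathcal{H}$-distance from $X(u)$ to the subspace spanned by $X(t^1),\dots,X(t^n)$, and since $X(0)=0$ we may freely adjoin $t^0=0$ to the design points. Using the isometry $X(t)\mapsto e^{i\langle t,\la\rangle}-1 \in L^2(F)$, this squared distance becomes
\[
\Var\big(X(u)\mid X(t^1),\dots,X(t^n)\big) = \inf_{a_1,\dots,a_n} \int_\N \Big| \big(e^{i\langle u,\la\rangle}-1\big) - \sum_{k=1}^n a_k \big(e^{i\langle t^k,\la\rangle}-1\big)\Big|^2 F(d\la).
\]
After absorbing the constant terms (using the $t^0=0$ trick), this is $\inf_{a} \int_\N \big|e^{i\langle u,\la\rangle} - \sum_{k=0}^n a_k e^{i\langle t^k,\la\rangle}\big|^2 f(\la)\,d\la$ subject to $\sum a_k = 1$, i.e. a weighted $L^2$ approximation problem with weight $f$. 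The point of Condition (C), in the form (\ref{Eq:aniMat}), is that only the tail behavior $|\la|\ge 1$ matters; the low-frequency part contributes a harmless bounded perturbation.

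For the \emph{upper bound}, I would choose an explicit, suboptimal predictor. Fix the index $k$ achieving $\min_{0\le k\le n}\sum_j \sigma_j(|u_j - t^k_j|)$ and simply predict $X(u)$ by $X(t^k)$; then the error is at most $\E\big(X(u)-X(t^k)\big)^2 = v(u-t^k)$. So the upper bound reduces to the deterministic variogram estimate
\[
v(h) = 2\int_\N \big(1-\cos\langle h,\la\rangle\big) f(\la)\,d\la \le c_7 \sum_{j=1}^N \sigma_j(|h_j|),
\]
which one proves by splitting the integral over the regions $\{|\la|\le 1\}$ and $\{|\la|\ge 1\}$, using $1-\cos x \le \min(2, x^2)$, and evaluating the anisotropic tail integral coordinate by coordinate: for each $j$ the contribution behaves like $\int |h_j|^{-1}\big(1\wedge (h_j\la_j)^2\big)|\la_j|^{\be_j\gamma'-\,\cdots}$ and the three cases $H_j<1$, $H_j=1$, $H_j>1$ in (\ref{Def:sigma}) come precisely from whether this one-dimensional integral converges at infinity ($H_j<1$), is logarithmically divergent and must be cut off ($H_j=1$), or is dominated by the quadratic regime near the origin of the rescaled variable ($H_j>1$). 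This is the anisotropic analogue of the classical computation for fractional Brownian motion.

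For the \emph{lower bound}, the natural tool is a harmonic-analysis / Fourier argument: one must show that no linear combination $\sum_{k=0}^n a_k e^{i\langle t^k,\la\rangle}$ with $\sum a_k=1$ can approximate $e^{i\langle u,\la\rangle}$ well in $L^2(f\,d\la)$ unless $u$ is close (in the anisotropic metric $\rho(u,t^k)^2 := \sum_j |u_j-t^k_j|^{2H_j}$) to some $t^k$. The standard device is to test against a well-chosen bump: pick a smooth function $\psi$ whose Fourier transform is supported so that $\widehat\psi$ ``sees'' the frequency $u$ but is orthogonal (or nearly so) to all frequencies $t^k$ that are $\rho$-far from $u$, scaled anisotropically at the relevant frequency level $R$ with $R^{-H_j}\asymp \min_k |u_j - t^k_j|$ in the worst coordinate; then $\int (e^{i\langle u,\la\rangle}-\sum a_k e^{i\langle t^k,\la\rangle})\overline{\widehat\psi(\la)}\,d\la$ is bounded below by a constant while, by Cauchy–Schwarz and the lower bound in (\ref{Eq:aniMat}), it is at most $\sqrt{\Var(\cdots)}\cdot (\int |\widehat\psi|^2 / f)^{1/2}$, and the second factor is $O(R^{-Q/2}\cdot R^{\,?})$ chosen to match $\min_k \rho(u,t^k)$. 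Since the paper cites Xiao (2009) and notes (\ref{Eq:aniMat}) puts these fields in that framework, I expect the cleanest route is to invoke the strong local nondeterminism property established there: fields whose spectral density satisfies (\ref{Eq:aniMat}) are strongly locally $\phi$-nondeterministic with $\phi(h)=\sum_j |h_j|^{2H_j}$, and strong local nondeterminism is \emph{exactly} the statement that $\Var(X(u)\mid X(t^1),\dots,X(t^n)) \ge c_6 \min_{0\le k\le n}\sum_j |u_j-t^k_j|^{2H_j}$.

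The main obstacle is the lower bound. The upper bound is essentially a one-variable calculus exercise once the spectral domain is split, but the lower bound genuinely requires either (i) quoting and correctly invoking the strong local nondeterminism result for anisotropic Gaussian fields with spectral density (\ref{Eq:aniMat}) — which is the efficient path and relies on the equivalence between (\ref{Eq:Maternb}) and (\ref{Eq:aniMat}) already proved in the Remark — or (ii) reproducing the Fourier-analytic bump-function argument in the anisotropic setting, where the delicate point is constructing the test function adapted simultaneously to the anisotropic scaling exponents $H_1,\dots,H_N$ and to the location of the \emph{nearest} design point in the worst coordinate, uniformly in $n$ and in the configuration $t^1,\dots,t^n$.
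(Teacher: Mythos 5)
Your proposal matches the paper's proof in both halves: the upper bound is obtained exactly as you say, by predicting $X(u)$ with the nearest $X(t^k)$ and bounding the variogram via the split of the spectral integral at $|\la|=1$ (the paper's Lemma \ref{Le:St}), and the lower bound is the same duality argument, testing $e^{i\l u,\la\r}-\sum_k a_k e^{i\l t^k,\la\r}$ against an anisotropically rescaled bump $\de_r(t)=r^{-Q}\de(r^{-E}t)$ supported in $\{\sum_j|t_j|^{H_j}<1\}$ so that it vanishes at every $u-t^k$, then applying Cauchy--Schwarz with weight $1/f$ on $\{|\la|\ge 1\}$. The only step you gloss over is the low-frequency region $\{|\la|<1\}$, where $1/f$ is not controlled by (\ref{Eq:aniMat}) and the paper instead invokes a separate pointwise bound $|g(\la)|\le c\,|\la|\,\|g\|_{L^2(f)}$ for exponential sums (its Lemma \ref{Le:main}); your remark that this part is ``a harmless bounded perturbation'' is the right idea but is precisely where that lemma is needed.
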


If $H_j <1$, for $j=1,\cdots, N$, then the two bounds in
(\ref{Eq:C3'}) match. When there is some $H_j>1$, that means, the
random field $X(t)$ is smoother in the $j$-th direction
[see Corollaries \ref{Th:MSdiff}, \ref{Th:MSdiff2} and
Theorem \ref{Th:spdiff} below], then the
upper and lower bounds are not the same any more. This suggests
that the prediction error may become bigger as $X(t)$ gets smoother
in some directions.

%\begin{lemma}\label{Le:Pre2}
%Under the same assumption as Theorem \ref{Th:pred-error}, there
%exist constants $c_8>0$ and $c_9>0$ such that for $s,t\in \N$,
%\begin{equation}\label{Eq:pre2}
%c_8\,\sum_{j=1}^N|s_j-t_j|^{2\bar{H}_j}\leq\Var\(X(t)|X(s)\)\leq
%c_9\,\sum_{j=1}^N|s_j-t_j|^{2\bar{H}_j}.
%\end{equation}
%\end{lemma}

The proof of Theorem \ref{Th:pred-error}, as well as those
of Theorems \ref{Th:moduli}, \ref{Th:dim} and \ref{Th:dim2}
reply partially on the following lemma, which provides upper
and lower bounds for the variogram of the model.

\begin{lemma}\label{Le:St}
Let $X=\{X(t), t\in \N\}$ be a centered Gaussian random field
valued in $\R$ with stationary increments and spectral density
$f(\la)$ satisfying (\ref{Eq:Maternb}). Then
there exist constants $c_{8}>0$ and $c_{9}>0$ such that
for $s,t \in \R^N$,
\begin{equation}\label{Eq:sigmab}
c_{8}\sum_{j=1}^N\sigma_j\big(|s_j-t_j|\big)
\leq \E\(X(s)-X(t)\)^2\leq
c_{9}\sum_{j=1}^N\sigma_j\big(|s_j-t_j|\big),
\end{equation}
where the function $\sigma_j$ is defined in (\ref{Def:sigma}).
\end{lemma}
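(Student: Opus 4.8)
The plan is to start from the spectral representation \eqref{Eq:sigmaa}, which gives
$$
\E\bigl(X(s)-X(t)\bigr)^2 = v(s-t) = 2\int_{\N}\bigl(1-\cos\l s-t,\la\r\bigr)f(\la)\,d\la,
$$
and to estimate this integral using the form \eqref{Eq:aniMat} of the spectral density. Write $h = s-t$. The first step is to split the domain of integration into the ``low frequency'' region $\{|\la|\le 1\}$ and the ``high frequency'' region $\{|\la|>1\}$. On $\{|\la|\le 1\}$ one uses the elementary bound $0\le 1-\cos x\le \tfrac12 x^2$ together with $|\l h,\la\r|\le |h|\,|\la|$ and the integrability hypothesis $\int_{|\la|\le 1}|\la|^2 f(\la)\,d\la<\infty$ to see that this piece contributes at most $c\,|h|^2$, which is absorbed into $\sum_j \sigma_j(|h_j|)$ once one checks that each $\sigma_j(r)\ge c\, r^2$ for $r$ in a bounded range (for $H_j<1$ this uses $r\le$ const; for $H_j=1$ use $r^2|\log r|$; for $H_j>1$ it is immediate). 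So the bounds in \eqref{Eq:sigmab} are governed entirely by the high-frequency part, exactly as the remark before the lemma advertises.

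The main work is the high-frequency integral $I(h) = \int_{|\la|>1}\bigl(1-\cos\l h,\la\r\bigr)\,\dfrac{d\la}{\bigl(\sum_{j=1}^N |\la_j|^{H_j}\bigr)^{2+Q}}$ and its lower counterpart with $c_4$, $c_5$. The key step is a scaling/substitution argument exploiting the anisotropy. Fixing $h$ with all $h_j\neq 0$, substitute $\la_j = |h_j|^{-1}\,\mu_j$ for each $j$; the Jacobian is $\prod_j |h_j|^{-1}$, the phase becomes $\sum_j \mathrm{sgn}(h_j)\mu_j = \l \tilde h,\mu\r$ with $\tilde h = (\mathrm{sgn}(h_1),\dots,\mathrm{sgn}(h_N))$, and the denominator becomes $\bigl(\sum_j |h_j|^{-H_j}|\mu_j|^{H_j}\bigr)^{2+Q}$. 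Pulling out the ``slowest'' scale — more precisely, using the elementary comparison $\sum_j a_j^q \asymp (\sum_j a_j)^q$ from the remark, together with a dyadic decomposition of $\mu$-space into regions where one coordinate dominates — one factors the estimate into a sum over $j$ of contributions each comparable to $|h_j|^{-H_j\cdot\text{(something)}}$ times a convergent constant integral; the exponents are arranged precisely so that the $j$-th term is $\asymp |h_j|^{2H_j}$ when $H_j<1$. The convergence of the residual $\mu$-integrals near $\infty$ is where the hypothesis $\ga>\sum_j 1/\be_j$, equivalently $2+Q>\sum_j 1/H_j$ hence all $H_j>0$ and the denominator exponent is large enough, is used; near $\mu=0$ the factor $1-\cos\l\tilde h,\mu\r = O(|\mu|^2)$ supplies the needed decay. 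When some $H_j\ge 1$ the $j$-th integral near $\infty$ would diverge as a pure power, and one instead cuts it off at scale $|\mu_j|\asymp |h_j|^{-1}$: below the cutoff the phase oscillates and $1-\cos$ is $O(|\mu_j|^2|h_j|^{-2}\cdot|h_j|^2)$... more carefully, the truncated integral $\int_1^{1/|h_j|} r^{1-H_j(2+Q)}\,dr$ (after reducing to one dimension) produces $r^2$ behavior when $H_j>1$ and $r^2|\log r|$ when $H_j=1$, which is exactly the definition of $\sigma_j$ in \eqref{Def:sigma}. This case analysis is the technical heart of the argument.

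Finally, one handles the degenerate cases where some coordinates of $h$ vanish (there the corresponding substitution is not available, but one simply drops those coordinates — setting $h_j=0$ only decreases $1-\cos\l h,\la\r$ pointwise, giving the upper bound, and for the lower bound one integrates out the $\la_j$ variables first to reduce to a lower-dimensional instance of the same estimate) and assembles the pieces: the upper bound of \eqref{Eq:sigmab} follows by summing the per-coordinate upper estimates plus the $c\,|h|^2$ low-frequency term, and the lower bound follows because the dyadic region where the $j$-th coordinate dominates already contributes a constant multiple of $\sigma_j(|h_j|)$, and these regions are disjoint so their contributions add. I expect the main obstacle to be making the ``factor the anisotropic integral into a sum of one-dimensional integrals'' step fully rigorous — controlling the cross terms in the dyadic decomposition and verifying uniformity of the constants as $h$ ranges over all of $\R^N$, including the transition as some $|h_j|$ crosses the threshold $1$ and as $H_j$ passes through $1$. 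A cleaner route to the same end, which I would also keep in mind, is to invoke the general anisotropic variogram estimates for random fields with spectral density of the form \eqref{Eq:aniMat} established in Xiao (2009), to which Condition (C) reduces by the computation in the Remark; but since the lemma is stated for its own sake, giving the self-contained scaling argument above seems preferable.
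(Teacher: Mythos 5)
Your overall strategy --- estimate the spectral integral for $v(h)$ by splitting into low and high frequencies, reduce the high-frequency part to one-dimensional integrals in $\la_j$ with a cutoff at $|\la_j|\asymp|h_j|^{-1}$, and read off the three regimes of $\sigma_j$ --- is the same as the paper's, and your upper-bound sketch is essentially the paper's estimate of its integrals $I_3$ and $I_4$. The paper reaches the single-phase integrals $\int(1-\cos(h_k\la_k))f(\la)\,d\la$ more cleanly than you do: it first telescopes $X(s)-X(t)$ through the points $\hat s_k=(s_1,\dots,s_k,t_{k+1},\dots,t_N)$ and applies Jensen's inequality, which replaces your dyadic decomposition of the phase by a one-line reduction and also disposes of the degenerate case $h_j=0$ automatically. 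Your justification for that degenerate case is actually wrong as stated: setting $h_j=0$ does \emph{not} decrease $1-\cos\l h,\la\r$ pointwise (the cosine is not monotone in its argument); this is harmless, since one can pass to the limit using continuity of $v$, but it should not be phrased as a pointwise inequality.

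The genuine gap is in the lower bound. After your substitution $\la_j=|h_j|^{-1}\mu_j$ the integrand contains $1-\cos\big(\sum_j \mathrm{sgn}(h_j)\mu_j\big)$, and on the ``dyadic region where the $j$-th coordinate dominates'' this factor vanishes on the hypersurfaces $\sum_i\mathrm{sgn}(h_i)\mu_i\in 2\pi\Z$ and can be made small by cancellation between the $j$-th phase term and the others; the assertion that such a region ``already contributes a constant multiple of $\sigma_j(|h_j|)$'' is precisely the step you flag as the obstacle, and it does not follow from the decomposition alone --- some argument that kills the cross terms is needed. The paper's device is different from yours: it rescales by a \emph{single} anisotropic factor, $y_\ell=\rho(h)^{1/H_\ell}\la_\ell$ with $\rho(h)=\sum_j|h_j|^{H_j}$, and restricts the integral to a region $D(h)$ on which (a) every $|h_\ell\la_\ell|\le 1$, so $1-\cos x\ge c\,x^2$ applies, and (b) all phase terms $h_\ell\la_\ell$ have the same sign, so that $\big(\sum_\ell h_\ell\la_\ell\big)^2\ge (h_j\la_j)^2$ and the cross terms can simply be discarded. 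Some such sign-coherence (or positivity) argument is the missing ingredient in your sketch; without it the per-coordinate lower bound does not close. Your fallback of citing the variogram estimates of Xiao (2009) for densities satisfying (\ref{Eq:aniMat}) is legitimate, but then the lemma is no longer proved self-containedly.
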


The upper bound in (\ref{Eq:sigmab}) implies that $X$ has
a version whose sample functions are almost surely continuous.
Throughout this paper, without loss of generality, we will
assume that the sample function $t \mapsto X(t)$ is almost
surely continuous.

\section{Smoothness properties of anisotropic Gaussian models}
\label{sec:smoothness}

Regularity properties of sample path of random fields are of
fundamental importance in probability and statistics. Many
authors have studied mean square and sample path continuity
and differentiability of  Gaussian processes and fields. See
Cram\'er and Leadbetter (1967), Alder (1981), Stein (1999),
Banerjee and Gelfand (2003), Adler and Taylor (2007). In this
section we provide explicit criteria for the mean square
and sample path differentiability, for the models introduced
in Section 2.

\subsection{Distributional properties of mean square
partial derivatives}

Banerjee and Gelfand (2003) studied the smoothness
properties of stationary random fields and some
non-stationary relatives through
directional derivative processes and their distributional
properties. To apply their method to random fields
with stationary increments, let us first recall the definition
of mean square directional derivatives.

\begin{definition}
Let $u \in \N$ be a unit vector. A second order random field
$\{X(t), t\in \N\}$ has mean square directional
derivative $X'_u(t)$ at $t\in \N$ in the direction $u$ if, as $h\to 0$,
\[
 X_{u,h}(t)=\frac{X(t+h u)-X(t)}{h}
\]
converges to $X'_u(t)$ in the $L_2$ sense. In this case, we write
$X'_u(t) = {\rm l.i.m.}_{h \to 0} X_{u,h}(t)$.
\end{definition}

Let $e_1,e_2,\cdots,e_N$ be an orthonormal basis for $\N$. If $u = e_j$,
then $X'_{e_j}(t)$ is the mean square partial derivative in the $j$-th
direction defined in Adler (1981), which will simply be written as
$X'_{j}(t)$. We will also write $X_{e_j,h}(t)$ as $X_{j,h}(t)$.

For any second order, centered random field
$\{X(t), t\in \N\}$, similar to Theorem 2.2.2 in Adler (1981),
one can easily establish a criterion in terms of the
covariance function $C(s, t) = \E\big[X(t) X(s)\big]$ for
the existence of mean square directional derivative $X'_u(t)$.
Banerjee and Gelfand (2003) further showed that
the covariance function of $X'_u(t)$ is given by
\[
\begin{split}
K_u(s, t) &= \lim_{h\to 0}\lim_{k\to 0} \E\big[X_{u, h}(t)
X_{u, k}(s)\big ]\\
&=  \lim_{h\to 0}\lim_{k\to 0} \frac{C(t+hu, s + ku)- C(t+hu, s)-
C(t, s + ku)+C(t, s) } {hk}.
\end{split}
\]
Extending their argument, one obtains the following theorem for Gaussian
random fields with stationary increments.

\begin{theorem}\label{Th:diff}
Let $X=\{X(t), t\in\N\}$ be a centered Gaussian random field
valued in $\R$ with stationary increments, then the mean
square partial derivative $X'_{j}(t)$ exists for all
$t \in \R^N$ if and only if the limit
\begin{equation}\label{Eq:msd-nsc}
\lim_{h,k\to 0}\frac{v(he_j)+v(ke_j)-v\((h-k)e_j\)}{hk}
\end{equation}
exists. Moreover, this later condition is equivalent to
$v(t)$ has second-order partial derivatives at 0
in the $j$-th direction.
\end{theorem}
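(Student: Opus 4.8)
The plan is to reduce the existence of $X'_j(t)$ to a statement purely about the variogram $v$, using the fact that for Gaussian (hence $L^2$) fields, mean square convergence of $X_{j,h}(t)$ is equivalent to the Cauchy criterion in $L^2$, which in turn is controlled by the covariance structure of the increments. First I would write out $\E\big[(X_{j,h}(t)-X_{j,k}(t))^2\big]$ and express everything in terms of $v$. Since $X$ has stationary increments and $X(0)=0$, we have $\E\big[(X(s)-X(t))^2\big] = v(s-t)$, and the cross term $\E\big[(X(t+he_j)-X(t))(X(t+ke_j)-X(t))\big]$ can be expanded via the polarization identity $2ab = (a^2+b^2-(a-b)^2)$ applied to $a = X(t+he_j)-X(t)$, $b = X(t+ke_j)-X(t)$, giving $v(he_j)+v(ke_j)-v((h-k)e_j)$. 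A short computation then shows
\[
\E\big[(X_{j,h}(t)-X_{j,k}(t))^2\big] = \frac{v(he_j)}{h^2} + \frac{v(ke_j)}{k^2} - 2\,\frac{v(he_j)+v(ke_j)-v((h-k)e_j)}{2hk},
\]
so that the family $\{X_{j,h}(t)\}_{h\to 0}$ is Cauchy in $L^2$ (equivalently, $X'_j(t)$ exists) if and only if the quantity in (\ref{Eq:msd-nsc}) has a finite limit. One must also check the limit in (\ref{Eq:msd-nsc}) is independent of $t$ — it is, since $v$ does not depend on $t$ — so existence at one point gives existence everywhere, matching the statement.

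For the "moreover" clause, I would argue that the limit in (\ref{Eq:msd-nsc}) existing is precisely the second symmetric difference quotient of the one-variable function $g(r) := v(re_j)$ at $0$. Writing $h$ and $k$ and setting $s = h$, $-u = k$ temporarily, one sees $v(he_j)+v(ke_j)-v((h-k)e_j) = g(h) + g(-k) - g(h-(-(-k))) $... more cleanly: since $v$ is even (it is a variogram, $v(h)=v(-h)$), put $g(r)=v(re_j)$, and note $g(h)+g(k)-g(h-k)$ divided by $hk$ is a standard mixed second difference; its limit as $h,k\to 0$ exists iff $g''(0)$ exists (in the symmetric/second-difference sense), using $g(0)=0$ and $g'(0)=0$ (the latter because $g\ge 0$ with a minimum at $0$, provided $g$ is smooth enough, or by the evenness of $g$ giving vanishing odd part). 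This identifies the condition with "$v$ has a second-order partial derivative at $0$ in the $j$-th direction."

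The main obstacle I expect is the last equivalence: showing that existence of the \emph{mixed} limit as $h,k\to 0$ independently (not just along $h=k$) is equivalent to twice-differentiability of $g$ at $0$. One direction is easy — if $g''(0)$ exists then Taylor expansion with $g(0)=g'(0)=0$ gives $g(h)+g(k)-g(h-k) = g''(0)\,hk + o(hk)$, hence the limit is $g''(0)$. The converse is more delicate: one must deduce from the existence of the mixed limit that $g$ is actually twice differentiable at $0$, which typically requires first establishing that $g'(0)$ exists (exploiting that $g$ is even, or that $X$ is mean square continuous so $g$ is continuous, and that the mixed difference being bounded forces the needed regularity) and then a careful argument that the second difference quotient controls the second derivative. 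I would handle this by first taking $k=h$ to get that the symmetric second difference $\big(2g(h)-g(2h)\big)/h^2$... (or rather $\big(g(h)+g(-h)-g(2h)\big)$, using evenness $g(h)+g(h)-g(2h))/h^2$) converges, then bootstrapping; alternatively one can invoke the standard fact (as in Adler (1981), Theorem 2.2.2, adapted to the stationary-increments setting) that this is the correct and complete characterization, and present the verification at the level of detail appropriate to the paper.
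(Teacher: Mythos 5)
Your proposal follows essentially the same route as the paper: the paper invokes the standard criterion that $X_{j,h}(t)$ converges in $L^2$ if and only if $D_{h,k}=\E\big[X_{j,h}(t)X_{j,k}(t)\big]$ tends to a constant as $h,k\to 0$ (which is exactly your Cauchy-criterion computation, since $\E\big[(X_{j,h}-X_{j,k})^2\big]=D_{h,h}+D_{k,k}-2D_{h,k}$), computes $D_{h,k}=\frac{1}{2hk}\big\{v(he_j)+v(ke_j)-v((h-k)e_j)\big\}$ by the same polarization identity, and for the ``moreover'' clause argues via Taylor's theorem in one direction and, in the other, sets $h=k$ to deduce $v'_j(0)=0$ and then realizes $v''_j(0)$ as an iterated limit of the mixed difference quotient---precisely the $h=k$-then-bootstrap plan you sketch. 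The delicate points you flag (the $o(hk)$ control in the Taylor direction and the passage from the joint limit to genuine twice-differentiability) are treated in the paper at the same, rather terse, level of detail, so your proposal is a faithful match.
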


As a consequence, we obtain an explicit criterion for the
existence of mean square partial derivatives of Gaussian random 
fields in Section 2.

\begin{corollary}\label{Th:MSdiff}
Let $X=\{X(t),t\in \N\}$ be a centered Gaussian random field
valued in $\R$ with stationary increments and spectral density
$f(\la)$ satisfying Condition (C). Then for every $j=1,\cdots,N$,
the mean square partial derivative $X'_j(t)$ exists if and only if
\begin{equation}\label{Eq:mqdiff}
\be_j\bigg(\ga-\sum_{i=1}^N\frac{1}{\be_i}\bigg)>2,
\end{equation}
or equivalently $H_j > 1$ {\rm [}cf. (\ref{Eq:Hs}){\rm ]}.
\end{corollary}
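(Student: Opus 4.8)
The plan is to combine Theorem~\ref{Th:diff} with the variogram estimates in Lemma~\ref{Le:St}. By Theorem~\ref{Th:diff}, the mean square partial derivative $X'_j(t)$ exists for all $t$ if and only if the limit in (\ref{Eq:msd-nsc}) exists, equivalently $v(t)$ has a second-order partial derivative at $0$ in the $j$-th direction. So the task reduces to deciding, for each $j$, whether $h\mapsto v(he_j)$ is twice differentiable at $h=0$. The key is that along the $j$-th axis the variogram behaves like $\sigma_j(|h|)$: by Lemma~\ref{Le:St}, $c_8\,\sigma_j(|h|)\le v(he_j)\le c_9\,\sigma_j(|h|)$, and $\sigma_j$ is $r^{2H_j}$, $r^2|\log r|$, or $r^2$ according as $H_j<1$, $H_j=1$, or $H_j>1$.

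First I would handle the "only if" direction. If $H_j\le 1$, the two-sided bound forces $v(he_j)$ to grow no faster than $|h|^{2H_j}$ (up to the logarithmic correction when $H_j=1$) and no slower; in particular $v(he_j)/h^2\to\infty$ as $h\to 0$, since $\sigma_j(r)/r^2\to\infty$ in both the $H_j<1$ and $H_j=1$ cases. A twice-differentiable-at-$0$ function $\phi$ with $\phi(0)=0$, $\phi'(0)=0$ (which holds here because $v$ is even, nonnegative, and vanishes at $0$, so the first derivative along the axis is $0$) must satisfy $\phi(h)=O(h^2)$; this contradicts the lower bound. Hence $X'_j(t)$ cannot exist unless $H_j>1$. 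For the "if" direction, when $H_j>1$ I need genuine $C^2$ behavior of $v(\cdot e_j)$ at $0$, not merely the order bound $O(h^2)$ from Lemma~\ref{Le:St}, so here I would go back to the spectral representation (\ref{Eq:sigmaa}): $v(he_j)=2\int_{\N}(1-\cos(h\la_j))\,F(d\la)$. Differentiating formally twice in $h$ produces the integrand $2\la_j^2\cos(h\la_j)$, so the second derivative exists at $0$ provided $\int_{\N}\la_j^2\,F(d\la)<\infty$; one then justifies differentiation under the integral sign by dominated convergence using $|1-\cos x|\le \tfrac12 x^2$ and $|\,\partial^2_h(1-\cos(h\la_j))\,|\le\la_j^2$. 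Finiteness of $\int \la_j^2 f(\la)\,d\la$ is checked exactly as in Proposition~\ref{Le:int}: split into $|\la|\le 1$ (finite by the assumed integrability of $|\la|^2 f$) and $|\la|\ge 1$, where (\ref{Eq:aniMat}) gives $f(\la)\asymp (\sum_i|\la_i|^{H_i})^{-(2+Q)}$, and a change of variables $\la_i=x_i^{1/H_i}$ (the standard scaling argument for these anisotropic densities) reduces the convergence of $\int_{|\la|\ge1}\la_j^2 f(\la)\,d\la$ to the condition $2H_j>2$, i.e.\ $H_j>1$. Finally I would record that (\ref{Eq:mqdiff}) is just $H_j>1$ rewritten via the definition (\ref{Eq:Hs}) of $H_j$, since $H_j=\tfrac{\be_j}{2}\big(\ga-\sum_i\tfrac{1}{\be_i}\big)$.

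The main obstacle is the "if" direction: Lemma~\ref{Le:St} only controls $v(he_j)$ up to multiplicative constants, which is not enough to conclude twice-differentiability (a function can be squeezed between $c_8 h^2$ and $c_9 h^2$ and still fail to be $C^2$, or even differentiable, at $0$). One must therefore return to the integral representation and carry out the differentiation-under-the-integral argument, and the crux there is verifying $\int_{\N}\la_j^2 f(\la)\,d\la<\infty$ under $H_j>1$ by the anisotropic scaling estimate. The "only if" direction, by contrast, is a soft consequence of the lower bound in Lemma~\ref{Le:St} together with Theorem~\ref{Th:diff}.
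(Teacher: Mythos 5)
Your proof is correct, and in the sufficiency direction it is essentially the paper's own argument: both reduce the problem, via Theorem~\ref{Th:diff} and the spectral representation (\ref{Eq:sigmaa}), to the moment condition $\int_{\N}\la_j^2 f(\la)\,d\la<\infty$, and both verify by the same anisotropic integration (iterating (\ref{Eq:Elem}) over the variables $\la_i$, $i\ne j$) that this holds exactly when $H_j>1$; whether one phrases the conclusion as differentiating $h\mapsto v(he_j)$ twice under the integral sign, as you do, or as applying dominated convergence directly to the second difference quotient $D_{h,k}=\frac{1}{2hk}\{v(he_j)+v(ke_j)-v((h-k)e_j)\}$, as the paper does, is immaterial since Theorem~\ref{Th:diff} identifies the two formulations. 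Where you genuinely diverge is necessity: the paper sets $h=k\downarrow 0$, notes that the integrand of $D_{h,h}$ is then non-negative, and applies Fatou's lemma to get $\liminf D_{h,h}\ge\int_{\N}\la_j^2 f(\la)\,d\la=\infty$ when (\ref{Eq:mqdiff}) fails; you instead invoke the lower bound of Lemma~\ref{Le:St}, which forces $v(he_j)/h^2\ge c_8\,\sigma_j(|h|)/h^2\to\infty$ for $H_j\le 1$, incompatible with second-order differentiability of $v$ at $0$ (note $D_{h,h}=v(he_j)/h^2$ since $v(0)=0$, so the two arguments bound the same quantity, one from the spectral side and one from the variogram side). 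Your route gives a softer necessity proof at the cost of importing Lemma~\ref{Le:St}, whereas the paper's Fatou argument stays entirely inside the spectral computation. You are also right to flag that the two-sided bound of Lemma~\ref{Le:St} cannot by itself yield sufficiency; that observation is precisely why both you and the paper must return to the integral representation for that half.
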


Assume condition (\ref{Eq:msd-nsc}) of Theorem \ref{Th:diff} holds 
so that the mean square partial derivative $X'_j(t)$ exists for all
$t \in \R^N$. We now consider the distributional properties of the
random field $\{X'_j(t), t \in \R^N\}$. 

Since $\E(X(t))=0$
for all $t\in\N$, we have $\E(X_{j,h}(t))=0$ and $\E(X'_j(t))=0$. Let
$C_j^{(h)}(s,t)$ and $C_j(s,t)$ denote the covariance functions
of the random fields $\{X_{j,h}(t), t \in \R^N\}$ and $\{X'_j(t), 
t \in \R^N\}$, respectively. Let $\De=s-t$, we immediately have
\begin{equation}\label{Eq:covdi}
C_j^{(h)}(s,t)=\frac{v(\De+he_j)+v(\De-he_j)-2v(\De)}{2h^2},
\end{equation}
and $\Var(X_{j,h}(t))=v(he_j)/h^2$, which only
depends on the scalar $h$.

\begin{theorem}\label{Th:deriv}
Let $X=\{X(t), t \in \R^N\}$ be a centered Gaussian random
field valued in $\R$ 
with stationary increments. Suppose that all second-order
partial derivatives of the variogram $v(t)$ exist.
Then the covariance function of $X'_j(t)$ is given by
\begin{equation}\label{Eq:derivv}
C_j(s,t)=\frac{1}{2}v_j^{''}(s-t),
\end{equation}
where $v_j^{''}(t)$ is the second-order partial derivative
of $v$ at $t$ in the $j$-th direction. In particular, $\{X'_j(t),
t \in \R^N\}$ is a stationary Gaussian random field.
\end{theorem}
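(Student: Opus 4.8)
The plan is to compute the covariance $C_j(s,t)$ directly from its definition as a double limit, exploiting the stationary-increments structure to reduce everything to the variogram $v$. Recall from Theorem~\ref{Th:diff} and the displayed formula for $K_u(s,t)$ that, with $\De = s-t$,
\[
C_j(s,t) = \lim_{h\to 0}\lim_{k\to 0} \E\big[X_{j,h}(t) X_{j,k}(s)\big]
= \lim_{h\to 0}\lim_{k\to 0} \frac{\E\big[(X(t+he_j)-X(t))(X(s+ke_j)-X(s))\big]}{hk}.
\]
First I would expand the numerator using the polarization identity $2\,\E[(X(a)-X(b))(X(c)-X(d))] = v(a-d) + v(b-c) - v(a-c) - v(b-d)$, which holds for any centered random field with stationary increments since $v$ is the variogram and cross terms like $\E[(X(a)-X(b))(X(c)-X(d))]$ depend only on increment differences. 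With $a = t+he_j$, $b=t$, $c = s+ke_j$, $d = s$, the four arguments become $\De + he_j - ke_j$, $\De - ke_j$... wait, more carefully: $a-d = \De + he_j$, $b-c = -\De - ke_j$, $a-c = \De + he_j - ke_j$, $b-d = -\De$. Using that $v$ is even (it is a negative definite function with $v(-h)=v(h)$ from \eqref{Eq:sigmaa}), the numerator equals $\tfrac12\big[v(\De+he_j) + v(\De+ke_j) - v(\De+(h-k)e_j) - v(\De)\big]$.

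Next I would take the iterated limit. Dividing by $hk$ and letting $k\to 0$ first: the hypothesis that all second-order partial derivatives of $v$ exist guarantees, in particular, that $v$ is differentiable in the $e_j$-direction, so $\partial_{e_j} v(\De+ke_j)\big|_{k=0}$ exists, and a first-order Taylor expansion of $v(\De+ke_j)$ and $v(\De+(h-k)e_j)$ in $k$ shows the inner limit is $\tfrac{1}{2h}\big[\partial_{e_j}v(\De) + \partial_{e_j}v(\De + he_j)\big] - \tfrac{1}{2h}\big[v(\De+he_j)+v(\De)\big]\cdot 0$ — more precisely it collapses to $\tfrac{1}{2h}\big[v(\De+he_j) - v(\De) + h\,\partial_{e_j}v(\De)\big]/h$... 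I would organize this as: $\lim_{k\to 0}\tfrac{1}{hk}[v(\De+ke_j) - v(\De+(h-k)e_j)] = \tfrac{1}{h}[v'_j(\De) + v'_j(\De+he_j)]$, writing $v'_j = \partial_{e_j}v$, so the inner limit equals $\tfrac{1}{2h^2}[v(\De+he_j)-v(\De)] + \tfrac{1}{2h}v'_j(\De+he_j)$. Hmm, let me just say: after the $k\to 0$ limit one is left with a first-order difference quotient in $h$ of the function $h\mapsto v'_j(\De+he_j)$ plus lower-order pieces, and then letting $h\to 0$ and invoking existence of the second-order partial derivative $v''_j(\De)$ yields $C_j(s,t) = \tfrac12 v''_j(\De) = \tfrac12 v''_j(s-t)$. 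Since this depends on $s,t$ only through $s-t$, and a centered Gaussian field is determined by its covariance, $\{X'_j(t)\}$ is stationary Gaussian.

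The main obstacle is bookkeeping: carrying out the two successive limits rigorously when only the \emph{existence} (not continuity) of the second-order partials of $v$ is assumed. One must be careful that the inner limit in $k$ produces a well-defined function of $h$, and that the sum/difference of first-order difference quotients genuinely converges — I would handle this by writing each of the two difference quotients (in $h$ and in $k$) separately in terms of $v'_j$ via the mean value theorem or the definition of the derivative, so that no double-limit interchange is ever needed; the iterated limit is taken in the fixed order $k\to 0$ then $h\to 0$, exactly as in the statement of $K_u(s,t)$. A minor point to verify is that $v'_j$ is an odd function and $v''_j$ an even function, so that the final expression is symmetric in $s$ and $t$ as a covariance must be; this follows from differentiating the identity $v(-h)=v(h)$.
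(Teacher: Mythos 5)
Your overall strategy is viable and is essentially the paper's own reduction: everything comes down to difference quotients of the variogram. The paper's proof is a one\-liner because it takes $h=k$ from the start --- by (\ref{Eq:covdi}) the covariance of the difference-quotient field is the symmetric second difference $C_j^{(h)}(s,t)=\frac{v(\De+he_j)+v(\De-he_j)-2v(\De)}{2h^2}$, and since $X_{j,h}(t)\to X'_j(t)$ in $L^2$ one passes to the limit in the covariance and obtains $\tfrac12 v''_j(\De)$ directly. Your two-parameter iterated limit is equally legitimate (the same $L^2$ convergence justifies it), but your execution contains a sign error that derails the computation. With $\De=s-t$, $a=t+he_j$ and $d=s$, you have $a-d=-\De+he_j$, so $v(a-d)=v(\De-he_j)$, not $v(\De+he_j)$; similarly $v(a-c)=v(\De-(h-k)e_j)$. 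The correct numerator is $\tfrac12\bigl[v(\De-he_j)+v(\De+ke_j)-v(\De-(h-k)e_j)-v(\De)\bigr]$.

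This is not cosmetic. With your bracket $v(\De+he_j)+v(\De+ke_j)-v(\De+(h-k)e_j)-v(\De)$, the first-order Taylor terms do not cancel: the inner limit over $k$ equals $\tfrac{1}{2h}\bigl[v'_j(\De)+v'_j(\De+he_j)\bigr]$, and since that bracket tends to $2v'_j(\De)\neq 0$ as $h\to 0$, the outer limit diverges like $v'_j(\De)/h$. Your displayed identity for the two-term piece $\frac{1}{hk}\bigl[v(\De+ke_j)-v(\De+(h-k)e_j)\bigr]$ is also false as stated, because that expression does not vanish at $k=0$. So the proof as written never reaches (\ref{Eq:derivv}), and indeed your paragraph visibly stalls at exactly this point. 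With the signs corrected the argument closes cleanly and needs no interchange of limits: setting $g(k)=v(\De+ke_j)-v(\De-(h-k)e_j)$, the numerator is $\tfrac12\bigl[g(k)-g(0)\bigr]$, so the inner limit equals $\tfrac{1}{2h}\bigl[v'_j(\De)-v'_j(\De-he_j)\bigr]$, and letting $h\to0$ gives $\tfrac12 v''_j(\De)$ using only the existence of the second-order partial at $\De$ (no continuity required, as you anticipated). Your closing remarks on the evenness of $v''_j$ and the resulting stationarity are fine once this is repaired.
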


\begin{proof}\ The desired result follows from (\ref{Eq:covdi}).
\end{proof}

It is also useful to determine the covariance of $X(s)$ and $X'_j(t)$
for all $s,t\in\N$. Since
\[
\Cov\(X(s),X_{j,h}(t)\)=\frac{1}{2h}\,\Big\{v(t+he_j)
-v(t)+v(\De)-v(\De-he_j)\Big\},
\]
where $\De=s-t$, we obtain
\[
\Cov\(X(t),X_{j,h}(t)\)=\frac{1}{2h}\,\Big\{v(t+he_j)-v(t)-v(he_j)\Big\}
\]
and
\begin{equation}\label{Eq:crosscov}
\begin{split}
\Cov\(X(s),X'_j(t)\)&=\lim_{h\to0}\frac{1}{2h}\, \Big\{v(t+he_j)
-v(t)+v(\De)-v(\De-he_j)\Big\}\\
&=\frac{1}{2}\(v'_j(t)+ v'_j(\De)\),
\end{split}
\end{equation}
where $v'_j(t)$ is the partial derivative of $v$ at $t$ in the
$j$-th direction.

In particular, $\Cov\(X(t),X'_j(t)\)=v'_j(t)/2$, which is
different from the stationary case. Recall that if $Y(t)$
is a stationary Gaussian field with mean square partial
derivative $Y'_j(t)$, then $Y(t)$ and $Y'_j(t)$ are
uncorrelated. That means, the level of the stationary
random field at a particular location is uncorrelated
with the partial derivative in any direction at
that location. However, this is not always true for
nonstationary random fields.

Next we consider the bivariate process
\[Y_j^{(h)}(t)=\left(
                 \begin{array}{c}
                   X(t) \\
                   X_{j,h}(t) \\
                 \end{array}
               \right).
\]
It can be verified that this process has mean 0 and
cross-covariance matrix
\begin{equation*}\label{Vmat}
\begin{split}
&V_{j,h}(s,t)\\
&=\left(
\begin{array}{cc}
\displaystyle\frac{v(s)+v(t)-v(\De)}{2}
& \displaystyle\frac{v(t+h e_j)-v(t)+v(\De)-v(\De-h e_j)}{2h}  \\
\displaystyle\frac{v(s+h e_j)-v(s)+v(\De)-v(\De+h e_j)}{2h}
& \displaystyle\frac{v(\De+h e_j)+v(\De-h e_j)-2v(\De)}{2h^2}
\\
\end{array}
\right).
\end{split}
\end{equation*}
Because $Y_j^{(h)}(t)$ is obtained by linear transformation of $X(t)$,
the above is a valid cross-covariance matrix in $\N$. Since
this is true for every $h$, letting $h\to 0$ we see that
\[
V_j(s,t)=\left(
\begin{array}{cc}
\displaystyle\frac{1}{2}\big\{ v(s)+v(t)-v(\De)\big\}
& \displaystyle\frac{1}{2}\big\{v'_j(t)+v'_j(\De)\big\}\\
\displaystyle\frac{1}{2}\big\{ v'_j(s)-v'_j(\De)\big\}
& \displaystyle\frac{1}{2}v_j^{''}(\De) \\
\end{array}
\right)
\]
is a valid cross-covariance matrix in $\N$. In fact, $V_j$ is the
cross-covariance matrix for the bivariate process
\[
Y_j(t)=\left(
\begin{array}{c}
X(t) \\
X'_j(t) \\
\end{array}
\right).
\]

\subsection{Criterion for mean square differentiability}

Benerjee and Gelfand (2003) pointed out that the existence
of all mean square directional derivatives of
a random field $X$ does not even guarantee mean square
continuity of $X$, and they introduced a notion of mean
square differentiability which has analogous properties
of total differentiability of a function in $\N$ in the
non-stochastic setting. We first recall their definition.
\begin{definition}
A random field $\{X(t), t\in \N\}$ is mean square differentiable
at $t\in \N$ if there exists a (random) vector $\nabla_X(t) \in \N$ 
such that for all scalar $h>0$, all vectors $u \in {\cal S}_N =
\{t\in \N: |t|=1\}$
\begin{equation}\label{def:msdiff}
X(t+h u)=X(t)+h u^T\nabla_X(t)+r(t,h u),
\end{equation}
where $r(t,h u)/h\to0$ in the $L_2$ sense as $h\to 0$.
\end{definition}

In other words,  for all vectors $u \in {\cal S}_N$, it
is required that
\begin{equation}\label{Eq:msd}
\lim_{h \to 0} \E\bigg(\frac{X(t+h u)-X(t)-h u^T\nabla_X(t)}
{h}\bigg)^2=0.
\end{equation}
It can be seen that if $X$ is mean square differentiable
at $t$, then for all unit vectors $u \in {\cal S}_N$
\begin{equation*}
\begin{split}
X'_u(t)&=  \stackrel{\displaystyle\msl}{\scriptstyle{h\to 0}}
\frac{X(t+h u)-X(t)}{h}\\
&=\stackrel{\displaystyle\msl}{\scriptstyle{h\to 0}}
\frac{h u^T\nabla_X(t)+r(t, h u)}{h}\\
&=u^T\nabla_X(t).
\end{split}
\end{equation*}
Hence it is necessary that $\nabla_X(t)=(X'_1(t), \ldots,
X'_N(t))$. 

The next theorem provides a sufficient condition
for a Gaussian random field with stationary increments
to be mean square differentiable.

\begin{theorem}\label{Th:msdifferentiability}
Let $X=\{X(t),t\in \N\}$ be a centered Gaussian random field
valued in $\R$ with stationary increments. If all the
second-order partial and mixed derivatives of the
variogram $v(t)$ exist and are continuous, then $X$ is
mean square differentiable at every $t\in\N$.
\end{theorem}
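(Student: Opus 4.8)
The plan is to reduce mean square differentiability to a second-moment computation using the bivariate process $Y_j(t)$ and the cross-covariance matrices that were just set up. Fix $t\in\N$ and a unit vector $u\in{\cal S}_N$. Define the remainder quotient
\[
R_h(u)=\frac{X(t+hu)-X(t)-h\,u^T\nabla_X(t)}{h},
\]
where $\nabla_X(t)=(X'_1(t),\dots,X'_N(t))$ is the vector of mean square partial derivatives, each of which exists by Theorem~\ref{Th:diff} since the second-order partial derivatives of $v$ exist. The goal is to show $\E R_h(u)^2\to0$ as $h\to0$. Expanding the square, $\E R_h(u)^2$ is a quadratic form in the quantities $\E[(X(t+hu)-X(t))^2]$, $\E[(X(t+hu)-X(t))X'_i(t)]$ and $\E[X'_i(t)X'_j(t)]$, all divided by appropriate powers of $h$; every one of these can be written in terms of $v$ and its first and second derivatives via (\ref{Eq:covdi}), (\ref{Eq:crosscov}), (\ref{Eq:derivv}), and the increment variance formula (\ref{Eq:sigmaa}).

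The key step is a second-order Taylor expansion of $h\mapsto v(he_j)$ and, more delicately, of the increment structure of $X$ along the segment from $t$ to $t+hu$. Concretely, I would first treat the axis-direction case $u=e_j$: here
\[
\E R_h(e_j)^2=\frac{v(he_j)}{h^2}-\frac{2}{h}\,\Cov\!\big(\tfrac{X(t+he_j)-X(t)}{h},X'_j(t)\big)\cdot h+\E\big(X'_j(t)\big)^2\cdot\text{(corrections)},
\]
and using that $v$ has a second-order partial derivative at $0$ in the $j$-th direction together with $v(0)=0$, $v'_j(0)=0$ (which follows from $v\ge0$ and evenness), one gets $v(he_j)/h^2\to\tfrac12 v''_j(0)$; matching this against $\Var(X'_j(t))=\tfrac12 v''_j(0)$ from (\ref{Eq:derivv}) and against the cross-term $\Cov(X(t+he_j)-X(t),X'_j(t))$ computed from (\ref{Eq:crosscov}) shows the three terms cancel to leading order, leaving $o(1)$. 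For a general unit vector $u=\sum_j u_j e_j$, I would write $X(t+hu)-X(t)$ as a telescoping sum of increments along the coordinate axes through intermediate points and apply the one-dimensional estimate to each piece, using the existence and continuity of \emph{all} second-order partial and mixed derivatives of $v$ to control the cross terms $\E[(X(t+hu)-X(t))X'_j(t)]$ uniformly; continuity is what lets the mixed second derivatives of $v$ evaluated at nearby points be replaced by their values at $0$ up to an $o(1)$ error.

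The main obstacle is the general-direction estimate: unlike the stationary case, $X(t)$ and $X'_j(t)$ are correlated (as the excerpt emphasizes, $\Cov(X(t),X'_j(t))=v'_j(t)/2\neq0$), so the bookkeeping in the quadratic form $\E R_h(u)^2$ does not collapse as cleanly, and one must carefully verify that all the $v$-derivative terms assemble into a perfect square that vanishes. I would handle this by invoking the positive semidefiniteness of the limiting cross-covariance matrix $V_j(s,t)$ (already established in the excerpt as a valid cross-covariance matrix) to organize the argument: $\E R_h(u)^2$ equals the increment variance of the bivariate process $Y^{(h)}_j$ against the linear functional $(1,-h)\mapsto\cdots$, and taking $h\to0$ through the matrices $V_{j,h}\to V_j$ together with the Taylor expansion shows the quadratic form tends to zero. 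An alternative, perhaps cleaner, route is to bound $\E R_h(u)^2$ directly by a sum over $j$ of one-dimensional remainders plus mixed-derivative error terms, each of which is $o(1)$ by the continuity hypothesis; I would present that version since it avoids matrix manipulations and makes the role of continuity of the mixed derivatives transparent.
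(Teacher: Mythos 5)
Your proposal is correct and follows essentially the same route as the paper: expand $\E R_h(u)^2$ into the three terms $h^{-2}v(hu)$, $\E\big(\sum_j u_j X'_j(t)\big)^2$ and $-h^{-1}\sum_j u_j v'_j(hu)$ via (\ref{Eq:crosscov}) and (\ref{Eq:derivv}), and show by Taylor's theorem (using $v(0)=0$ and $\nabla v(0)=0$) that they converge to $\tfrac12 u^T\Omega(0)u$, $\tfrac12 u^T\Omega(0)u$ and $-u^T\Omega(0)u$ respectively, where $\Omega(0)$ is the matrix of second-order partial and mixed derivatives of $v$ at $0$, so the limit is zero. The coordinate-axis telescoping you propose for general $u$ is an unnecessary detour: the paper simply applies the multivariate Taylor expansion of $v$ at $0$ along the ray $hu$ directly, which is exactly your ``alternative, cleaner route'' and handles the mixed derivatives in one stroke.
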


As a consequence of Theorem \ref{Th:msdifferentiability} we
obtain
\begin{corollary}\label{Th:MSdiff2}
Let $X=\{X(t),t\in \N\}$ be a centered Gaussian random field
valued in $\R$ with stationary increments and spectral density
$f(\la)$ satisfying Condition (C). If
\begin{equation}\label{Eq:mqdiff2}
\be_j\bigg(\ga-\sum_{i=1}^N\frac{1}{\be_i}\bigg)>2
\  \qquad \hbox{for every } j=1,\ldots,N,
\end{equation}
then $X$ is mean square differentiable at every $t\in\N$.
\end{corollary}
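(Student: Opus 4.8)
The plan is to deduce Corollary \ref{Th:MSdiff2} from Theorem \ref{Th:msdifferentiability} by verifying that, under (\ref{Eq:mqdiff2}), all second-order partial and mixed derivatives of the variogram $v(t)$ exist and are continuous on $\R^N$. By (\ref{Eq:sigmaa}),
\[
v(t)=2\int_{\N}\bigl(1-\cos\l t,\la\r\bigr)\,f(\la)\,d\la,
\]
so the natural approach is to differentiate twice under the integral sign and to justify this by dominated convergence. Differentiating formally in the $j$-th and $k$-th coordinates produces the candidate kernel $2\la_j\la_k\cos\l t,\la\r\,f(\la)$ (and $2\la_j^2\cos\l t,\la\r\,f(\la)$ for the pure second derivative, coming from $\partial_j\partial_j(1-\cos\l t,\la\r)=\la_j^2\cos\l t,\la\r$). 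So the whole corollary reduces to one integrability statement: under (\ref{Eq:mqdiff2}),
\begin{equation}\label{Eq:integcor}
\int_{\N}|\la_j|\,|\la_k|\,f(\la)\,d\la<\infty\qquad\text{for all }j,k\in\{1,\dots,N\}.
\end{equation}
Indeed, once (\ref{Eq:integcor}) holds, the integrand $\la_j\la_k\cos\l t,\la\r f(\la)$ is dominated by the fixed integrable function $|\la_j||\la_k|f(\la)$ uniformly in $t$, which both validates differentiation under the integral (via the standard theorem on differentiating parameter integrals, applied twice) and gives continuity of the resulting second derivatives in $t$ by dominated convergence as $t'\to t$.

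First I would dispose of the low-frequency part: on $\{|\la|\le1\}$, $f$ is integrable against $|\la|^2$ by hypothesis (this is exactly condition (\ref{Eq:int}), which Condition (C) presupposes), and $|\la_j\la_k|\le|\la|^2$, so $\int_{|\la|\le1}|\la_j\la_k|f(\la)\,d\la<\infty$ with no extra assumptions needed. The real work is the high-frequency part $\{|\la|\ge1\}$, where I would use the lower/upper Matérn-type bounds, most conveniently in the rewritten form (\ref{Eq:aniMat}): $f(\la)\asymp\bigl(\sum_{i=1}^N|\la_i|^{H_i}\bigr)^{-(2+Q)}$ with $Q=\sum_i 1/H_i$ and $H_i$ as in (\ref{Eq:Hs}). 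Then I would estimate, for $|\la|\ge1$,
\[
\int_{|\la|\ge1}\frac{|\la_j|\,|\la_k|}{\bigl(\sum_{i=1}^N|\la_i|^{H_i}\bigr)^{2+Q}}\,d\la,
\]
using the elementary comparison $\sum_i|\la_i|^{H_i}\ge c\,|\la_m|^{H_m}$ for each fixed $m$ and a dyadic or direct power-counting argument along each coordinate axis. Concretely, bounding $|\la_j|\le(\sum_i|\la_i|^{H_i})^{1/H_j}$ and $|\la_k|\le(\sum_i|\la_i|^{H_i})^{1/H_k}$, convergence of the integral comes down to the scaling exponent $2+Q-\tfrac1{H_j}-\tfrac1{H_k}$ exceeding $\sum_i\tfrac1{H_i}=Q$ in the appropriate anisotropic-homogeneity sense — i.e., to $2-\tfrac1{H_j}-\tfrac1{H_k}>0$ after the coordinate-by-coordinate integration is set up with the substitution $\mu_i=|\la_i|^{H_i}$ (whose Jacobian contributes $\prod_i |\mu_i|^{1/H_i-1}$, accounting precisely for the $\sum_i 1/H_i$). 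Since (\ref{Eq:mqdiff2}) is exactly the statement $H_j>1$ for every $j$ (this equivalence is recorded in Corollary \ref{Th:MSdiff}), we get $1/H_j<1$ and $1/H_k<1$, hence $2-1/H_j-1/H_k>0$, which is what the computation needs.

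The main obstacle, and the only place requiring care, is the anisotropic power-counting estimate on $\{|\la|\ge1\}$: making the substitution $\mu_i=|\la_i|^{H_i}$ rigorous, tracking the Jacobian $\prod_i\tfrac1{H_i}\mu_i^{1/H_i-1}$, splitting the region $\{|\la|\ge1\}$ so that at least one coordinate is large, and checking that the resulting one-dimensional integrals of the form $\int_1^\infty \mu_i^{(1+\text{extra})/H_i-1}\bigl(\cdots\bigr)\,d\mu_i$ converge exactly under $H_j,H_k>1$. An alternative, perhaps cleaner, route avoiding an explicit change of variables is to slice $\{|\la|\ge1\}=\bigcup_m A_m$ where $A_m=\{|\la_m|^{H_m}\ge|\la_i|^{H_i}\ \forall i\}$, so that on $A_m$ we have $\sum_i|\la_i|^{H_i}\asymp|\la_m|^{H_m}$, then for each $i\ne m$ integrate $|\la_i|^{(\text{exponent})}$ over the range $|\la_i|\le|\la_m|^{H_m/H_i}$, and finally integrate the remaining power of $|\la_m|$ over $[1,\infty)$; the final $|\la_m|$-integral converges precisely because $2-1/H_j-1/H_k>0$ (with the $j=m$ or $k=m$ cases handled by noting the relevant exponent only improves). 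Either way, once (\ref{Eq:integcor}) is in hand, the rest of the proof is the routine differentiation-under-the-integral argument sketched above, and the conclusion follows from Theorem \ref{Th:msdifferentiability}.
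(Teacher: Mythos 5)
Your proposal is correct and follows the paper's intended route: the paper states this corollary as an immediate consequence of Theorem \ref{Th:msdifferentiability}, and your verification that (\ref{Eq:mqdiff2}) forces all second-order partial and mixed derivatives of $v$ to exist and be continuous is precisely the missing step. Two small refinements: the high-frequency mixed integrability $\int_{|\la|\ge 1}|\la_j||\la_k|f(\la)\,d\la<\infty$ follows in one line from $|\la_j\la_k|\le\tfrac12(\la_j^2+\la_k^2)$ together with $\int_{|\la_j|>1}\la_j^2 f(\la)\,d\la<\infty$, which is already established in the proof of Corollary \ref{Th:MSdiff} under $H_j>1$, so the anisotropic power-counting (while correct) is not needed; and in the \emph{first} application of differentiation under the integral sign the kernel $\la_j\sin\l t,\la\r$ should be dominated near $\la=0$ by $|t|\,|\la|^2 f(\la)$ rather than by $|\la_j|f(\la)$, since condition (\ref{Eq:int}) does not guarantee $\int_{|\la|\le 1}|\la_j|f(\la)\,d\la<\infty$.
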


\subsection{Criterion for sample path differentiability}

For many theoretical and applied purposes, one often needs
to work with random fields with smooth sample paths.
Refer to Adler (1981), Adler and Taylor (2007) and the
reference therein for more information. Since in general
mean square differentiability does not imply almost sure
sample path differentiability, it is of interest
to provide convenient criteria for the latter. %While such
%criteria are available for stochastic processes (i.e. $N=1$),
%see Cram\'er and Leadbetter (1967), Adler and Taylor (2007),
%few are available explicitly for random fields in the literature.

For Gaussian random fields considered in this paper, it turns
out that under the same condition as Corollary \ref{Th:MSdiff},
the partial derivatives of $X$ are almost surely continuous.

\begin{theorem}\label{Th:spdiff}
Let $X=\{X(t),t\in \N\}$ be a separable and centered Gaussian
random field with values in $\R$. We assume that $X$ has
stationary increments and  satisfies Condition (C).\\
{\rm (i).} If
\begin{equation}\label{Eq:diff}
\be_j\bigg(\ga-\sum_{i=1}^N\frac{1}{\be_i}\bigg)>2
\qquad (i.e., H_j > 1),
\end{equation}
for some $j \in \{1, \cdots, N\}$,
then $X$ has a version $\widetilde{X}$ with continuous
sample functions such that its $j$th partial derivative
$\widetilde{X}'_j(t)$ is continuous almost surely.\\
{\rm (ii).} If (\ref{Eq:diff}) holds for all $j \in \{1, \cdots, N\}$,
%\begin{equation}\label{Eq:diff-all}
%\ga-\sum_{i=1}^N\frac{1}{\be_i}>\frac{2}{\min_{1\leq j\leq N}\be_j},
%\end{equation}
then $X$ has a version $\widetilde{X}$ which is continuously
differentiable in the following sense: with probability 1,
\begin{equation}\label{def:sample-diff}
\lim_{h \to 0} \frac{\widetilde{X}(t+h u)- \widetilde{X}(t)
-h u^T\nabla_{\widetilde{X}}(t)} {h} = 0
\qquad \hbox{ for all } u \in {\cal S}_N
\ \hbox{ and } t \in \R^N.
\end{equation}
\end{theorem}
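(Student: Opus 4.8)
The plan is to prove part (i) first and then bootstrap to part (ii). For part (i), the idea is to show that the mean square partial derivative process $\{X'_j(t), t \in \R^N\}$—which exists by Corollary \ref{Th:MSdiff} under hypothesis (\ref{Eq:diff}) and is stationary Gaussian with covariance $\frac12 v''_j(s-t)$ by Theorem \ref{Th:deriv}—admits a continuous version, and then to identify that version with the pathwise partial derivative of a continuous version of $X$. The first step is to estimate the variogram of $X'_j$: I would compute $\E\big(X'_j(s) - X'_j(t)\big)^2 = v''_j(0) - v''_j(s-t)$ and bound it above by a sum $\sum_{k=1}^N \rho_k(|s_k - t_k|)$ where each $\rho_k$ is a modulus of continuity dictated by the high-frequency behaviour of $f$ (via the spectral representation of $v''_j$, differentiating under the integral sign is justified precisely because $\int |\la_j|^2 f(\la)\, d\la < \infty$ when $H_j>1$). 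Along the $j$-th axis the relevant exponent drops by $2$ in the $H_j$ direction, so one gets a Hölder-type bound; along the other axes the exponents $H_i$ ($i \neq j$) are unchanged. Then a standard entropy / Dudley-type argument (or Kolmogorov's continuity theorem for anisotropic fields, as in the references to Adler (1981)) gives a continuous version $\widetilde{Y}_j$ of $X'_j$.

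The second step is to upgrade "$\widetilde{Y}_j$ is a continuous version of the $L^2$-derivative" to "$\widetilde{Y}_j$ is the genuine pathwise partial derivative of a continuous version $\widetilde X$ of $X$." The clean way: fix the continuous version $\widetilde X$ of $X$ guaranteed by the remark after Lemma \ref{Le:St}, and for fixed $t$ write, using the fundamental theorem of calculus formally,
\[
\widetilde{X}(t + h e_j) - \widetilde{X}(t) \stackrel{?}{=} \int_0^h \widetilde{Y}_j(t + r e_j)\, dr .
\]
To prove this identity almost surely and simultaneously for all $t$, I would verify it in $L^2$ first (both sides have the same mean, namely $0$, and computing the $L^2$ distance between them reduces via Fubini and the definition of the mean-square derivative to something that vanishes), upgrade to "for each fixed $t$, a.s." and then, using joint continuity in $(t,h)$ of both sides (continuity of $\widetilde X$ and of $\widetilde Y_j$, plus continuity of the integral), to "a.s., for all $t$ and $h$ simultaneously." Dividing by $h$ and letting $h \to 0$, continuity of $r \mapsto \widetilde{Y}_j(t+re_j)$ gives $\pa_j \widetilde X(t) = \widetilde{Y}_j(t)$, which is continuous; this is the assertion of part (i).

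For part (ii), assume (\ref{Eq:diff}) for every $j$. By part (i) applied to each coordinate, $\widetilde X$ has continuous pathwise partial derivatives $\pa_1 \widetilde X, \ldots, \pa_N \widetilde X$ on all of $\R^N$, almost surely. It is a deterministic fact that a function on $\R^N$ possessing continuous (first-order) partial derivatives everywhere is continuously (totally) differentiable, with gradient $\nabla_{\widetilde X}(t) = (\pa_1\widetilde X(t), \ldots, \pa_N \widetilde X(t))$; applying this to the sample paths yields exactly (\ref{def:sample-diff}), with the convergence holding uniformly over $u \in {\cal S}_N$ for each fixed $t$ since it follows from the mean value theorem applied along the segment. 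The main obstacle is the second step of part (i): making the passage from the $L^2$-derivative to a pathwise derivative rigorous and, in particular, getting the "simultaneously for all $t$" quantifier, which requires the joint-continuity bookkeeping above rather than a naive Fubini argument. The anisotropic entropy estimate in the first step is routine given Lemma \ref{Le:St} and the differentiated spectral representation, but one must be careful that the moduli $\rho_k$ are summable enough to apply the continuity theorem—this is where the strict inequality in (\ref{Eq:diff}) (as opposed to $H_j \geq 1$) is used.
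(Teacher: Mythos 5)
Your proposal is correct and follows essentially the same route as the paper: a H\"older estimate for $\E\big(X'_j(s)-X'_j(t)\big)^2$ obtained from the differentiated spectral representation (using $H_j>1$), Kolmogorov/Gaussian continuity to get a continuous version of $X'_j$, identification of that version with the pathwise derivative via the integral identity $\widetilde X(t+he_j)-\widetilde X(t)=\int_0^h \widetilde Y_j(t+re_j)\,dr$ checked in $L^2$ and upgraded by continuity, and then the deterministic fact that continuous partials give total differentiability for part (ii). The only cosmetic difference is that the paper constructs the version $\widetilde X$ directly as the base value plus the integral of $X'_j$ and iterates this "updating" over $j=1,\dots,N$, whereas you verify the same identity for a fixed continuous version; these are equivalent.
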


If condition (\ref{Eq:diff}) does not hold for some
$j \in \{1, \cdots, N\}$, then $X(t)$ does
not have mean square partial derivatives along
those directions and $X(t)$ is usually a
random fractal. In this case, it is of interest to characterize
the asymptotic behavior of $X(t)$ by its local and uniform
moduli of continuity.

These problems for anisotropic Gaussian random fields have been
considered in Xiao (2009) and the methods there are applicable
to $X$ with little modification. For completeness, we state
the following result which can be proved by using Lemma \ref{Le:St}
and general Gaussian methods. We omit its proof.

\begin{theorem}\label{Th:moduli}
Let $X =\{X(t),t\in \N\}$ be as in Theorem \ref{Th:spdiff}. Then for
every compact interval $I \subset \R^N$, there exists a
positive and finite constant $c_{10}$, depending only on $I$ and
$H_j,\, (j=1,\ldots,N)$ such that
\begin{equation}
\limsup_{|\ep|\to 0}\frac{\sup_{t\in I,\,s\in[0,\ep]}|X(t+s)-X(t)|}
{\sqrt{\varphi(\ep) \log(1+\varphi(\ep)^{-1})}}\leq c_{10},
\end{equation}
where $\varphi(\ep)=\sum_{j=1}^N\sigma_j(|\ep_j|)$
for all $\ep = (\ep_1, \ldots, \ep_N)\in \R^N$. 		
\end{theorem}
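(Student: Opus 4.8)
The plan is to derive this uniform modulus of continuity from Lemma~\ref{Le:St} via the standard chaining (entropy/metric entropy) argument for Gaussian random fields, exactly as carried out in Xiao (2009). First I would introduce the canonical metric $d(s,t) = \big(\E(X(s)-X(t))^2\big)^{1/2}$ and observe that Lemma~\ref{Le:St} gives $c_{8}^{1/2}\varphi(s-t)^{1/2} \le d(s,t) \le c_{9}^{1/2}\varphi(s-t)^{1/2}$, where $\varphi(h) = \sum_{j=1}^N \sigma_j(|h_j|)$. Thus up to constants the canonical metric is comparable to $\sqrt{\varphi}$, and $\varphi$ behaves, along the $j$th axis, like the increasing function $\sigma_j$ which is regularly varying with index $\min(2H_j,2) \le 2$. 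The key structural point is that $\varphi$ is subadditive (each $\sigma_j$ is subadditive on $\R_+$ because $r \mapsto r^{2H_j}$, $r^2|\log r|$, $r^2$ are all concave or nearly so near $0$, modulo adjusting constants), which lets one control the $d$-diameter of small rectangles.

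Next I would fix a compact interval $I \subset \R^N$ and, for small $\ep = (\ep_1,\ldots,\ep_N)$, cover $I$ by translates of the rectangle $[0,\ep]$; the increment $\sup_{t\in I,\,s\in[0,\ep]}|X(t+s)-X(t)|$ is then handled by a metric-entropy estimate on $I + [0,\ep]$ with respect to $d$. The number of $d$-balls of radius $\delta$ needed to cover a fixed compact set in $\R^N$ is at most $c\,\delta^{-Q}$ for a suitable exponent $Q$ (coming from the anisotropy exponents $H_j$), so Dudley's entropy integral converges and the classical Gaussian maximal inequality gives, for the oscillation of $X$ over any $d$-ball of radius $\rho$, a bound of order $\rho\sqrt{\log(1/\rho)}$. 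Taking $\rho \asymp \sqrt{\varphi(\ep)}$ and summing over the $O(\prod_j \ep_j^{-1})$ translates of $[0,\ep]$ needed to cover $I$, a Borel--Cantelli argument along a geometric sequence $\ep^{(n)} \to 0$ (and a standard monotonicity interpolation between consecutive scales) yields
\[
\limsup_{|\ep|\to 0}\frac{\sup_{t\in I,\,s\in[0,\ep]}|X(t+s)-X(t)|}
{\sqrt{\varphi(\ep)\,\log(1+\varphi(\ep)^{-1})}} \le c_{10}
\]
almost surely, with $c_{10}$ depending only on $I$ and the $H_j$.

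I expect the main technical obstacle to be the bookkeeping of the anisotropy: because $X$ scales differently in each coordinate direction, one cannot simply use Euclidean balls, and must instead work with the anisotropic "balls" $\{s : \varphi(s-t) \le r^2\}$, whose volumes and covering numbers must be estimated carefully (this is where the condition $\gamma > \sum_j 1/\be_j$, equivalently $H_j>0$, and the definition of $\sigma_j$ in the three regimes $H_j<1$, $H_j=1$, $H_j>1$ all enter). A secondary subtlety is the logarithmic factor when some $H_j = 1$: the extra $|\log r|$ in $\sigma_j$ must be tracked so that it is absorbed into the $\log(1+\varphi(\ep)^{-1})$ term rather than worsening the rate. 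Once the covering-number bound $N(I+[0,\ep],d,\rho) \le c\rho^{-Q}$ is in hand, the remainder is the routine Gaussian chaining plus Borel--Cantelli already standard in the literature, which is why the paper only states the result and refers to Xiao (2009) for details.
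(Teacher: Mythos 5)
Your proposal follows essentially the same route the paper intends: the paper omits the proof, saying only that the result follows from Lemma \ref{Le:St} together with general Gaussian methods as in Xiao (2009), and that is precisely the canonical-metric comparison, anisotropic covering-number estimate, Dudley/chaining bound, and Borel--Cantelli-plus-interpolation argument you outline. One small correction that does not affect the outcome: the functions $\sigma_j$ (e.g.\ $r\mapsto r^{2}$ or $r^{2H_j}$ with $H_j>1/2$) are \emph{not} subadditive, but you never actually need subadditivity --- monotonicity of each $\sigma_j$ near $0$ already bounds the $d$-diameter of $t+[0,\ep]$, and the doubling property $\sigma_j(2r)\le c\,\sigma_j(r)$ is what the geometric-scale interpolation step requires.
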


\section{Fractal properties of anisotropic Gaussian models}

The variations of soil, landform and geology are usually highly
non-regular in form and can be better approximated by a stochastic
fractal. Hausdorff dimension has been extensively used in
describing fractals. We refer to Kahane (1985) or Falconer
(1990) for their definitions and properties.

Let $X=\{X(t),t\in \N\}$ be a real-valued, centered
Gaussian random field. For any integer $p \ge 1$,
we define an $(N, p)$-Gaussian random field ${\bf X} =
\{{\bf X}(t), t \in \N\}$ by
\begin{equation}\label{def:X}
{\bf X}(t) = \big(X_1(t), \ldots, X_p(t)\big),\qquad t \in \R^N,
\end{equation}
where $X_1, \ldots, X_p$ are independent copies of $X$.

In this section, under more general conditions on $X$ than those
in Sections 2--4, we study the Hausdorff dimensions of the range
$\X([0,1]^N)=\{\X(t):t\in
[0,1]^N\}$, the graph $\Gr \X([0,1]^N)=\{(t,\X(t)):t\in [0,1]^N\}$
and the level set $\X^{-1}(x)=\{t\in \N: \X(t)=x\}$ ($x \in \R^p$).
The results in this section can be applied to wide classes of
Gaussian spatial or space-time models (with or without stationary
increments).

First, let's consider fractional Brownian motion
$B^H=\{B^H(t),t\in \N\}$ valued in $\p$ with Hurst index $H \in (0, 1)$.
$B^H(t)$ is a special example of our model which, however, has
isotropic spectral density.
It is known [cf. Kahane (1985)] that
\[\dim \,{\rm Gr}\,B^H\big([0,1]^N\big)=\min\bigg\{N+(1-H)p,\,\,
\frac{N}{H}\bigg\} \quad\quad {\rm a.s.}\]
Especially, when $p=1$,
\[
\dim\, {\rm Gr}\,B^H\big([0,1]^N\big)= N+1-H\quad\quad {\rm a.s.}
\]
Also,
\[
\dim \,(B^H)^{-1}(x)=N-H,\quad\quad {\rm a.s.}
\]
The fractal properties of fractional Brownian motion have
been applied by many statisticians to estimate the Hurst
index $H$ and it is sufficient to choose $p=1$. Refer to
Hall and Wood (1993), Constantine
and Hall (1994), Kent and Wood (1997), Davis and Hall
(1999), Chan and Wood (2000, 2004), Zhu and Stein (2002).

Let $(\overline{H}_1, \ldots, \overline{H}_N) \in (0, 1]^N$
be a constant vector. Without loss of generality,
we assume that they are ordered as
\begin{equation}\label{Eq:Hs2}
0 < \overline{H}_1 \le \overline{H}_2 \le \cdots \le
\overline{H}_N \le 1.
\end{equation}
We assume the following conditions.

{\rm (D1).}\ There exist positive constants $\de_0, \,c_{11}
 \ge 1$  such that for all $s, t \in [0, 1]^N$ with $|s-t|\le \delta_0$
\begin{equation}\label{Eq:CompareVar2}
c_{11}^{-1}\, \sum_{j=1}^N |s_j - t_j|^{2\overline{H}_j}
\le \E\bigl[\bigl(X(t)-
X(s)\bigr)^2\bigr] \le c_{11}\, \sum_{j=1}^N |s_j - t_j|^{2\overline{H}_j}.
\end{equation}

{\rm (D2).}\ For any constant $\varepsilon \in (0, 1)$, there exists
a positive constant $c_{12}$ such that for all $u, t$ $\in [\ep, 1]^N$, 
we have
\begin{equation} \label{Eq:slnd2}
{\rm Var} \left( \left. X(u)\, \right|\, X(t)\right) \ge c_{12}\,
\sum_{j=1}^N  \big| u_j - t_j \big|^{2\overline{H}_j}.
\end{equation}

The following theorems determine the Hausdorff dimensions of
range, graph and level sets of $\X$. Because of anisotropy,
these results are significantly different from the aforementioned
results for fractional Brownian motion or other isotropic random
fields [cf. Xiao (2007)]. Even though Theorems \ref{Th:dim} and
\ref{Th:dim2} below are similar to Theorems 6.1 and 7.1 in Xiao
(2009), they have wider applicability. In particular, they can
be applied to a random field $X$ which may be smooth in certain
(or all) directions.

\begin{theorem}\label{Th:dim}
Let $\X=\{\X(t),t\in \N\}$ be an $(N, p)$-Gaussian random field
defined by (\ref{def:X}). If the coordinate process $X$ satisfies
Condition (D1), then, with probability 1,
\begin{equation}\label{Eq:dim1}
\dim \,\X \big([0,1]^N\big)=\min\bigg\{p;\, \sum_{j=1}^N\frac{1}
{\overline{H}_j}\bigg\},
\end{equation}
and
\begin{equation}\label{Eq:dim2}
\dim\,{\rm Gr} \X\big([0,1]^N\big)=\min_{1\leq k\leq
N}\bigg\{\sum_{j=1}^k\frac{\overline{H}_k}{\overline{H}_j}
+N-k+(1- \overline{H}_k)p;\,\,
\sum_{j=1}^N\frac{1}{\overline{H}_j}\bigg\},
\end{equation}
where $\sum_{j=1}^0\frac{1}{\overline{H}_j}:=0$.
\end{theorem}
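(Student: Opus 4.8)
The plan is to establish the Hausdorff dimensions of the range and graph via the standard two-sided approach: upper bounds come from covering arguments exploiting the uniform modulus of continuity (Theorem~\ref{Th:moduli}), and lower bounds come from the capacity argument (Frostman's lemma) together with a density/energy estimate that uses the anti-concentration furnished by a strong local nondeterminism-type property. Since the statement assumes only Condition (D1), which is a two-sided variogram estimate but does \emph{not} give conditional-variance lower bounds, I must be careful: (D1) alone suffices for the upper bounds and for the energy estimates needed for the lower bound of $\dim \X([0,1]^N)$ and $\dim\Gr\X([0,1]^N)$, because for the graph one only needs the one-sided bound $\E[(X(t)-X(s))^2]\le c\sum_j|s_j-t_j|^{2\overline H_j}$ in the crucial direction, and for the range lower bound one can use the Gaussian anti-concentration inequality $\P(|\X(t)-\X(s)|\le r)\le c\,(r/\sigma)^{p}$ with $\sigma^2 = \E[(X(t)-X(s))^2]\asymp \sum_j|s_j-t_j|^{2\overline H_j}$, which follows solely from (D1). (Condition (D2) will be needed for the level sets in Theorem~\ref{Th:dim2}, not here.)

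First I would treat the range. For the upper bound, fix $\eta>0$ and for each $j$ subdivide $[0,1]$ into intervals of length $\asymp n^{-1/\overline H_j}$ times a common scale; using Theorem~\ref{Th:moduli} the oscillation of $\X$ over the resulting anisotropic rectangle is $\lesssim n^{-1}(\log n)^{1/2}$, so $\X([0,1]^N)$ is covered by $\asymp n^{\sum_j 1/\overline H_j}$ balls of that radius, giving $\dim\X([0,1]^N)\le \sum_j 1/\overline H_j$; also the trivial bound $\dim\X([0,1]^N)\le p$. For the lower bound, when $p\le\sum_j1/\overline H_j$ I would use a Frostman-type argument: push forward Lebesgue measure on $[0,1]^N$ and estimate the $\gamma$-energy $\int\int |\X(t)-\X(s)|^{-\gamma}\,ds\,dt$ for any $\gamma<p$, bounding its expectation by $\int\int \E|\X(t)-\X(s)|^{-\gamma}\,ds\,dt \asymp \int\int (\sum_j|s_j-t_j|^{2\overline H_j})^{-\gamma/2}\,ds\,dt$, which is finite precisely when $\gamma<\sum_j 1/\overline H_j$ (a standard integral estimate after the change of variables $s_j-t_j\mapsto$ new variables, splitting the region according to which coordinate dominates); taking $\gamma<\min\{p,\sum_j1/\overline H_j\}$ yields the matching lower bound a.s.

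Next I would treat the graph. The upper bound requires care because the formula involves the minimum over $k$; the idea, following Xiao (2009), is that over an anisotropic rectangle of side lengths $n^{-1/\overline H_j}$ (for $j\le k$) and $n^{-1/\overline H_k}$ (for $j> k$), the graph piece fits into $\asymp n^{(1-\overline H_k)p}$ cubes of side $n^{-1}$ in the $\R^p$ direction times the $n^{\sum_{j\le k}1/\overline H_j + (N-k)/\overline H_k}$ rectangles; optimizing over $k$ and combining with the range bound gives the stated $\min$. For the lower bound I would again run the energy/capacity method on $\Gr\X$, now on $[0,1]^N\times\R^p$, and estimate $\E\big[(|t-s|^2 + |\X(t)-\X(s)|^2)^{-\gamma/2}\big]$; conditioning or using the Gaussian density of $\X(t)-\X(s)$, this is $\lesssim \int_{\R^p}(|t-s|^2+|w|^2)^{-\gamma/2}\,\sigma^{-p}e^{-|w|^2/2\sigma^2}\,dw$ with $\sigma\asymp(\sum_j|s_j-t_j|^{2\overline H_j})^{1/2}$; a careful case analysis (comparing $\sigma$ with $|t-s|$, and whether $\gamma\lessgtr p$) reduces the double integral over $[0,1]^{2N}$ to exactly the condition $\gamma<\min_k\{\sum_{j\le k}\overline H_k/\overline H_j + N-k+(1-\overline H_k)p\}$ and $\gamma<\sum_j1/\overline H_j$.

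The main obstacle will be the graph lower bound: the integral $\int\int \E\big[(|t-s|^2+|\X(t)-\X(s)|^2)^{-\gamma/2}\big]\,ds\,dt$ must be shown finite for every $\gamma$ strictly below the claimed dimension, and this requires the delicate multi-scale decomposition of $[0,1]^N$ into the anisotropic shells where the $k$-th coordinate gap dominates, estimating the Gaussian integral over $w\in\R^p$ in each shell, and checking that the exponent matches $\min_k\{\cdots\}$ rather than something larger. This is essentially the computation in Xiao (2009, proof of Theorem~6.1), and since (D1) provides exactly the two-sided variogram control used there, the argument transfers; I would carry it out by first handling the "diagonal" contribution where $|t-s|$ is small, then summing the geometric series over dyadic scales in each coordinate. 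All other steps — the covering upper bounds from Theorem~\ref{Th:moduli} and the trivial bounds $\dim\le p$, $\dim\Gr\le N+p$ — are routine.
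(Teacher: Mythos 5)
Your proposal follows essentially the same route as the paper: the upper bounds come from the uniform H\"older condition supplied by the right inequality in (D1) together with Theorem \ref{Th:moduli}, and the lower bounds come from the left inequality in (D1) via the capacity/energy argument of Xiao (2009, Theorem 6.1), which is exactly what the paper's (very terse) proof invokes. Your observation that (D2) is not needed here, and your anisotropic covering and shell decompositions, match the cited argument, so the proposal is correct.
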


\begin{proof}\
The right inequality in (\ref{Eq:CompareVar2}) and
Theorem 4.9 show that $\X(t)$ satisfies a uniform H\"older
condition on $[0, 1]^N$ which, in turn, imply the desired
upper bounds in  (\ref{Eq:dim1}) and (\ref{Eq:dim2}).

The lower bounds for $\dim \,\X \big([0,1]^N\big)$ and $
\dim\,{\rm Gr} \X\big([0,1]^N\big)$ can be derived from the
left inequality in (\ref{Eq:CompareVar2}) and a capacity
argument. See the proof of Theorem 6.1 in Xiao (2009)
for details.
\end{proof}

For the level sets of $\X$, we have
\begin{theorem}\label{Th:dim2}
Let $\X=\{\X(t),t\in \N\}$ be an $(N, p)$-Gaussian random field
defined by (\ref{def:X}). If the coordinate process $X$ satisfies
Conditions (D1) and (D2), then the following statements hold:\\
{\rm (i)}\ If $\sum_{j=1}^N\frac{1}{\overline{H}_j}<p$,
then for every $x\in\p\backslash \{0\}$, $\X^{-1}(x)=\emptyset$ a.s.\\
{\rm (ii)}\ If $\sum_{j=1}^N\frac{1}{\overline{H}_j}>p$,
then for any $x\in\p$, with positive probability
\begin{equation} \label{Eq:leveldim}
\dim\, \X^{-1}(x)=
\min_{1\leq k\leq N}\bigg\{\sum_{j=1}^k\frac{\overline{H}_k}
{\overline{H}_j}+N-k-\overline{H}_k\, p \bigg\}.
\end{equation}
\end{theorem}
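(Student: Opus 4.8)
The plan is to follow the standard two-part strategy for level sets of anisotropic Gaussian random fields, splitting into an upper bound (via a covering/first-moment argument) and a lower bound (via a second-moment/capacity argument), using Conditions (D1) and (D2) as the substitutes for the exact scaling of fractional Brownian motion. For part (i), when $\sum_{j=1}^N 1/\overline H_j < p$, I would show that for any fixed $x \in \p\setminus\{0\}$ one has $\E\big[\lambda_N\big(\{t \in [\ep,1]^N : |\X(t)-x|\le \rho\}\big)\big] \to 0$ in an appropriate sense; more precisely, cover $[\ep,1]^N$ by anisotropic rectangles $I$ of side lengths $r^{1/\overline H_j}$ in the $j$-th direction (so that by the upper bound in (D1) the oscillation of $\X$ over $I$ is of order $r$), estimate the probability that $\X(I)$ meets a ball of radius $r$ around $x$ using the crude bound $\P(|\X(t)-x|\le c r) \le c\, r^p$ (valid because $\X(t)$ is a nondegenerate $p$-dimensional Gaussian vector with variance bounded below, away from $t=0$, by (D1)), and sum over the $\asymp \prod_j r^{-1/\overline H_j}$ rectangles. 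Since $p > \sum_j 1/\overline H_j$, the expected number of rectangles meeting the level set tends to $0$ as $r \to 0$, whence $\X^{-1}(x)\cap [\ep,1]^N = \emptyset$ a.s.; letting $\ep \downarrow 0$ along a sequence and recalling $x \ne 0$ (so $0 \notin \X^{-1}(x)$ a.s. since $\X(0)=0$) gives $\X^{-1}(x) = \emptyset$ a.s.

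For part (ii), assume $\sum_{j=1}^N 1/\overline H_j > p$ and write $q$ for the right-hand side of (\ref{Eq:leveldim}). The upper bound $\dim \X^{-1}(x) \le q$ a.s. uses the same anisotropic covering as above together with the uniform modulus of continuity from Theorem \ref{Th:moduli} (or the Hölder continuity from the right inequality in (\ref{Eq:CompareVar2})): the number of rectangles of the above type that meet $\X^{-1}(x)$ has expectation $\asymp \prod_j r^{-1/\overline H_j}\cdot r^p$, and distributing the "deficit" $\prod_j r^{-1/\overline H_j} - r^{-p}$ optimally among the $N$ coordinates — covering each such rectangle by smaller cubes of the coarsest relevant side length — produces a covering of $\X^{-1}(x)\cap I$ whose $q$-dimensional sum is bounded; the minimum over $k$ in the formula is exactly the outcome of this optimization over which coordinate directions are "split." For the lower bound, fix $\ep \in (0,1)$, let $\nu$ be a suitable measure on $[\ep,1]^N$, and consider the random measure (local time / sojourn measure) $\mu_x(A) = \lim_{\rho \to 0}(2\rho)^{-p}\int_A \I\{|\X(t)-x|\le\rho\}\,dt$; one shows $\E[\mu_x([\ep,1]^N)] > 0$ and, using Condition (D2) to control $\Var(\X(u)\mid \X(t))$ from below, that the energy integral
\[
\E\bigg[\int\!\!\int \frac{\mu_x(dt)\,\mu_x(ds)}{|t-s|^{q-\eta}}\bigg]
= c\int\!\!\int_{[\ep,1]^{2N}} \frac{dt\,ds}{|t-s|^{q-\eta}\,\big[\det\Cov(\X(t),\X(s))\big]^{p/2}}
\exp\!\Big(-\tfrac{|x|^2 \,(\text{something})}{2}\Big)
\]
is finite for every $\eta>0$. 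The Gaussian density factor is handled by the standard device of replacing the full covariance determinant with $\Var(\X(t))\cdot\Var(\X(u)\mid\X(t)) \ge c\sum_j|t_j-u_j|^{2\overline H_j}$ via (D1) and (D2), reducing the double integral to one of the form $\int\!\int |t-s|^{-(q-\eta)}\big(\sum_j |t_j-s_j|^{2\overline H_j}\big)^{-p/2}\,dt\,ds$; a change of variables and the elementary inequality relating $|t-s|$ to $\sum_j|t_j-s_j|^{\overline H_j/\overline H_k}$ on each diadic "direction block" show this converges precisely when $q - \eta < q$, i.e. for all $\eta > 0$. Then the Frostman-type lemma gives $\dim \X^{-1}(x) \ge q - \eta$ on the event $\{\mu_x([\ep,1]^N) > 0\}$, which has positive probability; letting $\eta \downarrow 0$ and $\ep \downarrow 0$ finishes the proof. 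This entire scheme is exactly that of Theorem 7.1 in Xiao (2009), and since our Conditions (D1)--(D2) are precisely the hypotheses used there (now not requiring $\overline H_j < 1$), I would simply indicate the modifications and refer to that proof for the routine estimates.

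The main obstacle is the lower bound in part (ii): verifying the finiteness of the energy integral above when some $\overline H_j$ may equal $1$ (the smoothest admissible case under (\ref{Eq:Hs2})), since the blow-up rate of the Gaussian density near the diagonal is governed by $\big(\sum_j|t_j-s_j|^{2\overline H_j}\big)^{-p/2}$ and one must check that the optimization producing $\min_{1\le k\le N}$ still yields a strictly positive remaining "room" $\sum_{j=1}^N 1/\overline H_j - p > 0$ to absorb the singularity $|t-s|^{-(q-\eta)}$; the careful bookkeeping of which directions contribute at which scale — essentially the same combinatorial argument that produces the $\min_k$ in the graph dimension formula (\ref{Eq:dim2}) — is where the real work lies, though it is by now standard. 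A secondary point requiring care is the positivity $\E[\mu_x([\ep,1]^N)] > 0$ uniformly in small $\rho$, which follows from the nondegeneracy of $\X(t)$ on $[\ep,1]^N$ (again from (D1), the lower bound) but must be stated to justify that the limiting local time is a genuine nonzero measure on a set of positive probability.
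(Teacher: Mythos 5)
Your proposal is correct and follows exactly the route the paper itself takes: the paper's proof of this theorem is simply a citation to the proof of Theorem 7.1 in Xiao (2009), which is the covering/first-moment argument for part (i) and the upper bound, plus the sojourn-measure and energy-integral (capacity) argument using (D1)--(D2) for the lower bound, precisely as you outline. Your added remarks on the boundary case $\overline{H}_j=1$ and the nondegeneracy needed for $\E[\mu_x]>0$ are sensible refinements of the same scheme rather than a different approach.
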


\begin{proof}\ The results (i) and (ii) follow
from the proof of Theorem 7.1 in Xiao (2009).
\end{proof}

Let $X=\{X(t),t\in \N\}$ be a centered Gaussian random field
valued in $\R$ with stationary increments and spectral density
$f(\la)$ satisfying (\ref{Eq:Maternb}). Let $H_1, \ldots, H_N$
be defined by (\ref{Eq:Hs}). Then, by Theorem
\ref{Th:pred-error} and Lemma \ref{Le:St}, we see that $X$
satisfies (D1) for all $\overline{H}_j \in
(0,\, 1 \wedge H_j)$ ($1 \le j \le N$). It also satisfies
Condition (D2) with $\overline{H}_j = H_j$ provided
$H_j \le 1$ for all $j=1, \cdots, N$. Hence one can apply
Theorems \ref{Th:dim} and \ref{Th:dim2} to derive the following
result.
\begin{corollary}\label{Co:SI-dim}
Let $\X=\{\X(t),t\in \N\}$ be a centered Gaussian random field
valued in $\p$ defined by (\ref{def:X}). We assume that its
coordinate process $X$ has stationary increments with
spectral density $f(\la)$ satisfying (\ref{Eq:Maternb})
and $H_j \ (j = 1, \cdots, N)$ defined by (\ref{Eq:Hs}) are
ordered as $H_1 \le H_2 \le \cdots \le H_N$. We have\\
{\rm (i).} With probability 1, (\ref{Eq:dim1}) and (\ref{Eq:dim2})
hold with $\overline{H}_j = 1 \wedge H_j$
($1 \le j \le N$).\\
{\rm (ii).} If, in addition, $H_j \le 1$ for all $j=1, \cdots, N$
and $\sum_{j=1}^N\frac{1}{H_j}>p$,
then (\ref{Eq:leveldim}) holds with positive probability.
\end{corollary}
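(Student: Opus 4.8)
The plan is to deduce both parts directly from Theorems \ref{Th:dim} and \ref{Th:dim2}. Those theorems apply to an $(N,p)$-field $\X$ whose coordinates $X_1,\dots,X_p$ are i.i.d.\ copies of a process $X$ satisfying the metric Conditions (D1) and (D2); hence the only work is to extract (D1) and (D2), with exponents $\overline H_j = 1\wedge H_j$, from the spectral Condition (C). This is precisely what Lemma \ref{Le:St} and Theorem \ref{Th:pred-error} are designed to deliver.

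\emph{Verification of (D1).} By Lemma \ref{Le:St}, $\E[(X(s)-X(t))^2]\asymp\sum_{j=1}^N\sigma_j(|s_j-t_j|)$ for $s,t\in[0,1]^N$, with $\sigma_j$ as in (\ref{Def:sigma}). For every $j$ with $H_j<1$ one has $\sigma_j(r)=r^{2H_j}=r^{2(1\wedge H_j)}$, and for every $j$ with $H_j>1$ one has $\sigma_j(r)=r^2=r^{2(1\wedge H_j)}$ (in the latter directions $X$ is even sample-path differentiable by Theorem \ref{Th:spdiff}, and its variogram is genuinely quadratic). Thus whenever $H_j\ne1$ the bounds of Lemma \ref{Le:St} are exactly the two-sided power bounds demanded by (D1) with $\overline H_j=1\wedge H_j$. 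In the borderline case $H_j=1$ one instead has $\sigma_j(r)=r^2|\log r|$: the \emph{left} inequality of (D1) still holds with $\overline H_j=1$ (because $|\log r|\ge1$ for $r$ small), and, by Theorem \ref{Th:moduli}, $X$ is for every $\eta>0$ uniformly H\"older of order $1-\eta$ in the $j$-th variable. Since the proof of Theorem \ref{Th:dim} derives the upper bounds in (\ref{Eq:dim1}) and (\ref{Eq:dim2}) from a uniform H\"older condition alone, and the lower bounds from the left inequality of (D1) via a capacity argument, one obtains the upper bounds with exponent $1-\eta$ in any such direction and then lets $\eta\downarrow0$, using the continuity and monotonicity in the $\overline H_j$ of the formulas in (\ref{Eq:dim1})--(\ref{Eq:dim2}). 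This yields part (i).

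\emph{Verification of (D2) and part (ii).} Assume now $H_j\le1$ for all $j$ and $\sum_{j=1}^N 1/H_j>p$, so $\overline H_j:=H_j=1\wedge H_j$ and (D1) holds as above. Apply Theorem \ref{Th:pred-error} with $n=1$, $t^1=t$, and recall $t^0=0$:
\begin{equation*}
\Var\bigl(X(u)\,\big|\,X(t)\bigr)\ \ge\ c_6\,\min\Bigl\{\sum_{j=1}^N|u_j|^{2H_j},\ \sum_{j=1}^N|u_j-t_j|^{2H_j}\Bigr\}.
\end{equation*}
For $u,t\in[\varepsilon,1]^N$ the first sum is at least $c(\varepsilon):=\sum_{j=1}^N\varepsilon^{2H_j}>0$, while the second is at most $N$; hence the minimum is at least $\bigl(1\wedge(c(\varepsilon)/N)\bigr)\sum_{j=1}^N|u_j-t_j|^{2H_j}$, which is (\ref{Eq:slnd2}) with $\overline H_j=H_j$. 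With (D1), (D2) and $\sum_j 1/H_j>p$ in force, Theorem \ref{Th:dim2}(ii) gives (\ref{Eq:leveldim}) with positive probability; the logarithmic-correction remark above applies verbatim if some $H_j=1$.

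\emph{Main obstacle.} The only delicate point is the bookkeeping in the smooth directions: one must make sure that Theorems \ref{Th:dim}--\ref{Th:dim2} are invoked in a form permitting $H_j$ to be truncated to $1$ when $H_j>1$ and tolerating the slowly varying factor $|\log r|$ when $H_j=1$. Granting the standard fact that Hausdorff dimension depends only on the H\"older order and is insensitive to such factors, no estimate beyond Lemma \ref{Le:St} and Theorem \ref{Th:pred-error} is required.
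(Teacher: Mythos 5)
Your proposal is correct and follows essentially the same route as the paper, which likewise deduces the corollary by checking Condition (D1) (via Lemma \ref{Le:St}) and Condition (D2) (via Theorem \ref{Th:pred-error}) with $\overline{H}_j=1\wedge H_j$ and then invoking Theorems \ref{Th:dim} and \ref{Th:dim2}. Your treatment of the borderline case $H_j=1$ (absorbing the $|\log r|$ factor by an $\eta$-approximation and continuity of the dimension formulas) is in fact more careful than the paper's one-line assertion, and your explicit derivation of (\ref{Eq:slnd2}) from the $n=1$ case of Theorem \ref{Th:pred-error} is exactly the intended argument.
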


We believe that the above fractal properties can also be
useful for estimating the parameters $H_1, \ldots, H_N$
of our model. However this will be more subtle than the
isotropic case, where only the single parameter is
involved, for the following two reasons. First, if a parameter
$H_j > 1$, then the sample function $X(t)$ is smooth in the
$j$th direction and the Hausdorff dimensions of $\X$ has nothing
to do with $H_j$. In other words, based on fractal dimensions,
a parameter $H_j$ can be explicitly estimated only when $H_j < 1$.

Secondly, if we let $p=1$,
then (\ref{Eq:dim2}) gives $\dim{\rm Gr} X\big([0,1]^N\big)
=N+1-\overline{H}_1$, which does not give any information about
the other parameters $H_2, \ldots, H_N$. This suggests that,
in order to estimate all the parameters of an {\it anisotropic}
random field model, one has to work with a {\it multivariate}
random field $\X$ as defined by (\ref{def:X}).

\section{Applications to some stationary space-time models}

The above results can be applied to the stationary space-time
Gaussian fields constructed by Cressie and Huang (1999), Gneiting
(2002), de Iaco, Myers, and Posa (2002), Ma (2003a, 2003b)
and Stein (2005).

\subsection{Stationary covariance models}

Extending the results of Cressie and Huang (1999), Gneiting (2002)
showed that, for $(x,t)\in\d\times\R$,
\begin{equation}\label{Eq:Cre}
C(x,t)=\frac{\si^2}{\(1+a|t|^{2\a}\)^{\be N/2}}
\exp\bigg(-\frac{c|x|^{2\ga}}{\(1+a |t|^{2\a}\)^{\be\ga}}\bigg),
\end{equation}
is a stationary space-time covariance function, where $\si>0$,
$a>0$, $c>0$, $\a\in(0,1]$, $\be\in(0,1]$ and $\ga\in(0,1]$ are
constants. It can be verified that the corresponding spectral
measure is continuous in space $x$ and discrete in time $t$.
See Ma (2003a, 2003b) for more examples of stationary
covariance models.

In the following, we verify that the sample functions of these
space-time models are fractals. We will check Conditions
(D1) and (D2) first, and then obtain the corresponding
Hausdorff dimension results from Theorems \ref{Th:dim}
and \ref{Th:dim2}.

\begin{proposition}\label{Le:Cre1}
Let $X=\{X(x,t), (x,t)\in\d\times\R\}$ be a centered
stationary Gaussian random field in $\R$ with covariance function
as (\ref{Eq:Cre}). Then for any $M > 0$,
there exist constants $c_{13}>0$ and $c_{14}>0$ such that
\begin{equation}\label{Eq:si2}
c_{13}\(|x-y|^{2\ga}+|t-s|^{2\a}\)\leq\E\(X(x,t)-X(y,s)\)^2 \leq
c_{14}\(|x-y|^{2\ga}+|t-s|^{2\a}\)
\end{equation}
and
\begin{equation}\label{Eq:si3}
{\rm Var}\big(X(x,t)\big|X(y,s)\big) \geq
c_{13}\, \big(|x-y|^{2\ga}+|t-s|^{2\a}\big)
\end{equation}
for all $(x,t)$ and $(y,s) \in [-M, M]^{d+1}$.
\end{proposition}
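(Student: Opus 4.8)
The plan is to compute the variogram-like quantity $\E\big(X(x,t)-X(y,s)\big)^2$ directly from the covariance function \eqref{Eq:Cre} and extract the two-sided bounds in \eqref{Eq:si2}; the conditional-variance bound \eqref{Eq:si3} will then follow from the lower bound in \eqref{Eq:si2} by a standard one-observation argument. First I would write, for a stationary field,
\[
\E\big(X(x,t)-X(y,s)\big)^2 = 2\big(C(0,0) - C(x-y,\,t-s)\big) = 2\sigma^2\Big(1 - \frac{1}{(1+a|h|^{2\a})^{\be N/2}}\exp\big(-\tfrac{c|\xi|^{2\ga}}{(1+a|h|^{2\a})^{\be\ga}}\big)\Big),
\]
where $\xi = x-y$ and $h = t-s$. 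Since all parameters lie in bounded ranges and $(x,t),(y,s)$ are confined to $[-M,M]^{d+1}$, the quantities $|h|$ and $|\xi|$ are bounded; so it suffices to obtain the bounds for $|\xi|,|h|$ small (the complementary region where $|\xi|^{2\ga}+|h|^{2\a}$ is bounded below follows because the continuous function $\E(X(x,t)-X(y,s))^2$ is bounded below by a positive constant there and bounded above by $2\sigma^2$, both comparable to $|\xi|^{2\ga}+|h|^{2\a}$ on a compact set away from the diagonal).

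For the small-increment regime I would treat the temporal and spatial contributions by a telescoping decomposition: write
\[
1 - \frac{e^{-c|\xi|^{2\ga}/D^{\be\ga}}}{D^{\be N/2}} = \underbrace{\Big(1 - \frac{1}{D^{\be N/2}}\Big)}_{\text{temporal part}} + \underbrace{\frac{1}{D^{\be N/2}}\Big(1 - e^{-c|\xi|^{2\ga}/D^{\be\ga}}\Big)}_{\text{spatial part}},
\]
with $D = 1 + a|h|^{2\a}$. For the temporal part, $D \to 1$ as $h \to 0$, and $1 - D^{-\be N/2} \asymp D - 1 = a|h|^{2\a}$ by the elementary estimate $1 - (1+u)^{-\rho} \asymp u$ for $u$ in a bounded interval and $\rho>0$ fixed. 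For the spatial part, $D$ stays in a bounded interval $[1, 1+a(2M\sqrt{d}\,)^{2\a}\,]$ bounded away from $0$, so $D^{-\be N/2}$ and $D^{-\be\ga}$ are both bounded above and below by positive constants; then $1 - e^{-c|\xi|^{2\ga}/D^{\be\ga}} \asymp c|\xi|^{2\ga}/D^{\be\ga} \asymp |\xi|^{2\ga}$, again using $1 - e^{-v} \asymp v$ for $v$ bounded. Adding the two comparable pieces yields $\E(X(x,t)-X(y,s))^2 \asymp |\xi|^{2\ga} + |h|^{2\a}$, which is \eqref{Eq:si2}.

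For \eqref{Eq:si3}, I would use that for a centered Gaussian pair $\Var(X(x,t)\mid X(y,s)) = \Var(X(x,t)) - \Cov(X(x,t),X(y,s))^2/\Var(X(y,s))$, and since the field is stationary with $\Var \equiv \sigma^2$, this equals $\sigma^2 - C(\xi,h)^2/\sigma^2 = \sigma^{-2}(\sigma^2 - C(\xi,h))(\sigma^2 + C(\xi,h)) \ge \sigma^{-2}(\sigma^2 - C(\xi,h))\cdot \sigma^2 = \sigma^2 - C(\xi,h) = \tfrac12 \E(X(x,t)-X(y,s))^2$ (here $C(\xi,h) \le \sigma^2$ so $\sigma^2 + C(\xi,h) \ge \sigma^2$; one also needs $C(\xi,h) \ge 0$, which is clear from \eqref{Eq:Cre}, though in fact only $C(\xi,h) \ge -\sigma^2 + \text{const}$ would be needed after adjusting constants). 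Combining with the lower bound just proved in \eqref{Eq:si2} gives \eqref{Eq:si3} with $c_{13}$ as claimed. The main obstacle is purely bookkeeping: keeping track of which exponents ($\be N/2$, $\be\ga$) and which arguments stay in compact sets bounded away from the singularities of $u \mapsto u^{-\rho}$, so that all the $\asymp$ steps have uniform constants over $[-M,M]^{d+1}$ — there is no conceptual difficulty, only the need to state the elementary inequalities $1-(1+u)^{-\rho}\asymp u$ and $1-e^{-v}\asymp v$ with explicit dependence on the bounded ranges of $u$ and $v$.
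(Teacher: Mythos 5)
Your proposal is correct and follows essentially the same route as the paper: compute $\E\(X(x,t)-X(y,s)\)^2=2C(0,0)-2C(x-y,t-s)$ from stationarity, compare it to $|x-y|^{2\ga}+|t-s|^{2\a}$ via elementary expansions of $(1+u)^{-\rho}$ and $e^{-v}$ on bounded ranges (the paper does this with a Taylor expansion, you with a telescoping split into temporal and spatial parts, which is a cosmetic difference), and then deduce (\ref{Eq:si3}) from the one-observation Gaussian conditional-variance identity $\Var(U|V)=\sigma^{-2}(\sigma^2-C)(\sigma^2+C)\ge\sigma^2-C$ using $C\ge0$, exactly as in the paper. Your handling of the large-increment regime by compactness is a small point of extra care that the paper leaves implicit.
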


\begin{proposition}\label{Pr:Credim}
Let $X=\{X(x,t), (x,t)\in\d\times\R\}$ be a centered
stationary Gaussian random field in $\R$ with covariance function
as (\ref{Eq:Cre}), and let $\X$ be its associated $(N, p)$-random
field defined by (\ref{def:X}). Then, with probability 1,
\begin{equation}\label{Eq:Credimr}
\dim \,\X([0,1]^{d+1})=\min\bigg\{p;
\,\frac{d}{\ga}+\frac{1}{\a}\bigg\}.
\end{equation}
And if $0<\a\leq\ga<1$, then

\begin{equation}\label{Eq:CreGr1}
\dim \,{\rm Gr}\X([0,1]^{d+1})=\left\{
\begin{array}{ll} d+1+(1-\a)p \qquad
 &{\rm if}\,\, p < \frac{1}{\a},\\
 d+\frac{\ga}{\a}+(1-\ga)p
 &{\rm if}\,\, \frac{1}{\a}\leq p < \frac{1}{\a}+\frac{d}{\ga},\\
 \frac{1}{\a}+\frac{d}{\ga}
 &{\rm if}\,\, p \geq \frac{1}{\a}+\frac{d}{\ga}.\\
\end{array}
\right.
\end{equation}
If $0<\ga\leq\a<1$, then
\begin{equation}\label{Eq:CreGr2}
\dim \,{\rm Gr}\X([0,1]^{d+1})=\left\{
\begin{array}{ll} d+1+(1-\ga)p \qquad
 &{\rm if}\,\, p < \frac{d}{\ga},\\
 \frac{d\a}{\ga}+1+(1-\a)p
 &{\rm if}\,\, \frac{d}{\ga}\leq p < \frac{1}{\a}+\frac{d}{\ga},\\
 \frac{1}{\a}+\frac{d}{\ga}
 &{\rm if}\,\, p \geq \frac{1}{\a}+\frac{d}{\ga}.\\
\end{array}
\right.
\end{equation}
\end{proposition}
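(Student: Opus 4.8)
The plan is to deduce Proposition~\ref{Pr:Credim} from the general dimension results Theorems~\ref{Th:dim} and \ref{Th:dim2} by first verifying, via Proposition~\ref{Le:Cre1}, that the coordinate process $X$ satisfies Conditions (D1) and (D2) with the appropriate exponents. Reading off (\ref{Eq:si2}) and (\ref{Eq:si3}), we see that $X=\{X(x,t),(x,t)\in\d\times\R\}$, with parameter space $\R^N$ where $N=d+1$, satisfies (D1) and (D2) with $\overline{H}_j=\ga$ for $j=1,\ldots,d$ (the spatial coordinates) and $\overline{H}_{d+1}=\a$ (the temporal coordinate), at least after restricting to any compact box $[-M,M]^{d+1}$ and using stationarity of increments to pass to the box $[0,1]^{d+1}$ with $\de_0$ absorbed into the constants; since $\ga,\a\in(0,1]$ no truncation $1\wedge H_j$ is needed here.

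The range formula (\ref{Eq:Credimr}) then follows immediately from (\ref{Eq:dim1}): $\sum_{j=1}^N \frac{1}{\overline{H}_j}=\frac{d}{\ga}+\frac{1}{\a}$, giving $\dim\X([0,1]^{d+1})=\min\{p;\,\frac{d}{\ga}+\frac1\a\}$ almost surely. The graph formulas (\ref{Eq:CreGr1}) and (\ref{Eq:CreGr2}) come from specializing (\ref{Eq:dim2}). The one subtlety is that (\ref{Eq:dim2}) is stated under the ordering $\overline{H}_1\le\cdots\le\overline{H}_N$, so one must first relabel the coordinates so that the $N=d+1$ exponents $(\ga,\ldots,\ga,\a)$ are put in nondecreasing order; this is exactly why the statement splits into the two cases $0<\a\le\ga<1$ and $0<\ga\le\a<1$. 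In the first case the ordered exponents are $\overline{H}_1=\a$ followed by $\overline{H}_2=\cdots=\overline{H}_{d+1}=\ga$; in the second case they are $\overline{H}_1=\cdots=\overline{H}_d=\ga$ followed by $\overline{H}_{d+1}=\a$. One then evaluates the piecewise-linear-in-$p$ function $\min_{1\le k\le N}\{\sum_{j=1}^k \frac{\overline{H}_k}{\overline{H}_j}+N-k+(1-\overline{H}_k)p\}$ against the competitor $\sum_{j=1}^N\frac{1}{\overline{H}_j}=\frac{d}\ga+\frac1\a$.

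The main computational step—and the only place any real work occurs—is to show that the inner minimum over $k$ in (\ref{Eq:dim2}) is attained, as a function of $p$, at the breakpoints $k=1$, $k=d$ (resp.\ $k=d+1$ minus one), and $k=N$ that produce the three regimes displayed. Because the exponents take only two distinct values, the map $k\mapsto \sum_{j=1}^k\frac{\overline{H}_k}{\overline{H}_j}+N-k+(1-\overline{H}_k)p$ is constant in $k$ across the block of equal exponents (the $\overline{H}_k/\overline{H}_j$ terms over that block contribute $1$ each while $N-k$ drops by $1$), so only the ``corner'' values of $k$ where $\overline{H}_k$ changes need to be compared; for each such $k$ the expression is affine in $p$ with slope $1-\overline{H}_k\in[0,1)$, and a short comparison of these affine functions and of the flat competitor $\frac{d}\ga+\frac1\a$ identifies which one is smallest on each $p$-interval. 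Carrying this out in the case $0<\a\le\ga<1$ yields slopes $1-\a$ (from $k=1$) and $1-\ga$ (from $k=d+1$) together with the cap $\frac1\a+\frac{d}\ga$, whose crossover points are precisely $p=\frac1\a$ and $p=\frac1\a+\frac{d}\ga$, reproducing (\ref{Eq:CreGr1}); the symmetric bookkeeping in the case $0<\ga\le\a<1$ gives (\ref{Eq:CreGr2}). I expect this affine-function comparison to be entirely routine once the coordinates are correctly ordered; the only genuine obstacle is keeping the relabeling straight so that the roles of $\ga$ and $\a$ (and the multiplicities $d$ and $1$) are not interchanged, and checking that the case $p=\frac1\a$ (resp.\ $p=\frac{d}\ga$) lies in the intended subinterval rather than on the wrong side of a strict inequality.
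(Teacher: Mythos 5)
Your proposal is correct and follows essentially the same route as the paper: verify (D1) via Proposition \ref{Le:Cre1}, feed the ordered exponents $(\a,\ga,\ldots,\ga)$ or $(\ga,\ldots,\ga,\a)$ into Theorem \ref{Th:dim}, observe that $S(k)$ is constant across each block of equal exponents so only the corner values of $k$ matter, and compare the resulting affine functions of $p$ with the cap $\frac1\a+\frac{d}\ga$ at the crossover points $p=\frac1\a$ (resp.\ $p=\frac{d}\ga$) and $p=\frac1\a+\frac{d}\ga$. This matches the paper's computation with $S(1)$ versus $S$ (resp.\ $\tilde S$ versus $S(d+1)$) exactly.
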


\begin{proposition}\label{Pr:Credim2}
Let $X=\{X(x,t), (x,t)\in\d\times\R\}$ be a centered
stationary Gaussian random field in $\R$ with covariance function
as (\ref{Eq:Cre}), and let $\X$ be its associated $(N, p)$-random
field.\\
{\rm (i)} When $\frac{1}{\a}+\frac{d}{\ga}<p$, then for
every $x\in\p$, $\X^{-1}(x)=\emptyset$ a.s.\\
{\rm (ii)} When $\frac{1}{\a}+\frac{d}{\ga}>p$,  if
$0<\a\leq \ga \le 1$, then for any $x\in\p$, with
positive probability
\begin{equation}\label{Eq:Crel}
\dim\, \X^{-1}(x)=\left\{
\begin{array}{ll} d+1-\a p \qquad
 &{\rm if}\,\, p < \frac{1}{\a},\\
 d+\frac{\ga}{\a}-\ga p
 &{\rm if}\,\, p\geq \frac{1}{\a}, \\
\end{array}
\right.
\end{equation}
and if $0<\ga\leq\a \le 1$, then for any $x\in\p$, with
positive probability
\begin{equation}\label{Eq:Crel2}
\dim \,\X^{-1}(x)=\left\{
\begin{array}{ll} d+1-\ga p \qquad
 &{\rm if}\,\, p < \frac{d}{\ga},\\
 \frac{d\a}{\ga}+1-\a p
 &{\rm if}\,\, p\geq \frac{d}{\ga}.\\
\end{array}
\right.
\end{equation}
\end{proposition}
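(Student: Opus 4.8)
The plan is to reduce everything to the two anisotropic indices governing the covariance structure, and then quote Theorems~\ref{Th:dim} and~\ref{Th:dim2}. By Proposition~\ref{Le:Cre1}, the coordinate process $X=\{X(x,t)\}$ satisfies Condition (D1) on every cube $[-M,M]^{d+1}$ with the list of indices
\[
(\overline{H}_1,\ldots,\overline{H}_{d+1}) = (\underbrace{\ga,\ldots,\ga}_{d},\, \a),
\]
since $|x-y|^{2\ga} = \sum_{i=1}^d |x_i - y_i \cdot(\ldots)|$ is comparable to $\sum_{i=1}^d |x_i-y_i|^{2\ga}$ by the elementary inequality recalled after (\ref{Eq:aniMat}), applied with exponent $\ga \in (0,1]$; and (\ref{Eq:si3}) gives exactly Condition (D2) with the same indices. (Here I must be slightly careful: Theorems~\ref{Th:dim}--\ref{Th:dim2} are stated on $[0,1]^N$, so I first restrict to, say, $[0,1]^{d+1}$, which is contained in $[-M,M]^{d+1}$ for $M=1$, and note that by stationarity the constants in (D1), (D2) do not depend on the location of the cube.) First I would record that $N=d+1$ and $p$ are as in the statement, and then simply feed the index list above into the general formulas.

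The bulk of the proof is then a case analysis to simplify the min-formulas of Theorems~\ref{Th:dim} and~\ref{Th:dim2} under the two orderings. For the range, (\ref{Eq:dim1}) gives $\dim \X([0,1]^{d+1}) = \min\{p;\ d/\ga + 1/\a\}$, which is (\ref{Eq:Credimr}) and needs no ordering assumption. For the graph and level sets I would split according to whether $\a \le \ga$ or $\ga \le \a$, since this determines the monotone reordering required by (\ref{Eq:Hs2}). When $\a \le \ga$ the ordered list is $(\a,\ga,\ldots,\ga)$ (one copy of $\a$ first, then $d$ copies of $\ga$), so in the inner minimum over $1\le k\le d+1$ the candidate index $\overline{H}_k$ is $\a$ for $k=1$ and $\ga$ for $k\ge 2$; plugging in, the $k=1$ term contributes $\,1 + d + (1-\a)p$, and the terms $k = m+1$ ($1\le m\le d$) contribute $\,\a^{-1}\ga + m + (d-m) + (1-\ga)p = \ga/\a + d + (1-\ga)p$, so the graph dimension is the minimum of these two expressions and of $d/\ga + 1/\a$ — which, sorting out which term is smallest as $p$ increases past the thresholds $1/\a$ and $1/\a + d/\ga$, yields the three-line formula (\ref{Eq:CreGr1}). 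The case $\ga \le \a$ is symmetric, with the ordered list $(\ga,\ldots,\ga,\a)$, giving (\ref{Eq:CreGr2}). The level-set formulas (\ref{Eq:Crel}), (\ref{Eq:Crel2}) come from the same bookkeeping applied to (\ref{Eq:leveldim}), together with part (i) of Theorem~\ref{Th:dim2} for the emptiness statement when $d/\ga + 1/\a < p$.

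The only genuinely delicate point — and the step I expect to be the main obstacle, though it is more tedious than deep — is verifying that the piecewise formulas I obtain from the raw min over $k$ actually collapse to the clean breakpoints stated in (\ref{Eq:CreGr1})--(\ref{Eq:Crel2}). Concretely, one must check that for $p$ in the stated middle range the "mixed" term (the one involving both $\a$ and $\ga$) really is the minimizer, i.e. that the relevant linear functions of $p$ cross exactly at $p = 1/\a$ (resp.\ $p = d/\ga$) and at $p = 1/\a + d/\ga$; this is a short but fiddly comparison of slopes $1-\a$, $1-\ga$, $0$ and intercepts, using $0 < \a,\ga \le 1$ and the ordering hypothesis. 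I would present this comparison once in full for (\ref{Eq:CreGr1}) and then indicate that the remaining cases follow by the identical computation with the roles of $\a$ and $\ga$ (and of the multiplicities $1$ and $d$) interchanged. Everything else is a direct citation of the theorems of Section~5.
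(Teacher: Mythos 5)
Your proposal is correct and takes essentially the same route as the paper: verify Conditions (D1) and (D2) via Proposition~\ref{Le:Cre1} (with the index list consisting of $d$ copies of $\ga$ and one copy of $\a$, reordered according to which of $\a,\ga$ is smaller), invoke Theorem~\ref{Th:dim2}, and reduce the resulting minimum over $k$ to a two-term comparison whose unique breakpoint is $p=1/\a$ (resp.\ $p=d/\ga$). The only cosmetic differences are that you also rehearse the range and graph formulas, which belong to Proposition~\ref{Pr:Credim} rather than to this statement, and that for the level sets the third breakpoint $p=1/\a+d/\ga$ you mention plays no role since (\ref{Eq:leveldim}) has no analogue of the term $\sum_j 1/\overline{H}_j$.
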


\subsection{Stationary spectral density models}
In Section 6.1, the stationary space-time models are constructed
directly by covariance functions, which are isotropic in the space
variable. Stein (2005) showed that stationary covariance functions
which are anisotropic in space can be constructed by choosing
spectral densities of the form
\begin{equation}\label{Eq:st}
f(\la)=\Bigg(\sum_{j=1}^{d+1}c_j\(a_j+|\la_j|^2\)^{\a_j}\Bigg)^{-\nu},
\quad\quad \forall \la\in \d\times\R,
\end{equation}
where $\nu>0$, $c_j>0$, $a_j>0$ and $\a_j\in\mathbb{N}$ for
$j=1,\cdots, d+1$ are constants such that
    \[\sum_{j=1}^{d+1}\frac{1}{\a_j}<2\nu.\]
This last condition guarantees $f\in L^1(\R^{d+1})$. Clearly
$f(\lambda)$ in (\ref{Eq:st}) satisfies (\ref{Eq:Maternb})
with $\beta_j = \alpha_j$ and $\gamma = 2 \nu$.
Hence we may apply our results to analyze this class of models,
through the smoothness properties and the fractal properties.

\begin{proposition}
Let $X=\{X(x,t), (x,t)\in\d\times\R\}$ be a centered
stationary Gaussian random field in $\R$ with spectral
density as (\ref{Eq:st}).\\
{\rm (i)} If
\[
2\nu >\sum_{j=1}^{d+1}\frac{1}{\a_j}+
\frac{2}{\min_{1\leq\ell\leq d+1}\a_\ell},
\]
then $X(x, t)$ is mean square differentiable and has a version 
$\widetilde{X}(x,t)$ which is sample path differentiable 
almost surely.\\
{\rm (ii)} $X$ is a fractal {\rm [}i.e. the sample path of $X$ 
may have fractional Hausdorff dimension{\rm ]} if and only if
\[
\sum_{j=1}^{d+1}\frac{1}{\a_j}< 2\nu \leq
\sum_{j=1}^{d+1}\frac{1}{\a_j}+\frac{2}{\min_{1\leq\ell\leq
d+1}\a_\ell}.
\]
\end{proposition}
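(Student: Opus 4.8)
The plan is to recast everything in terms of the exponents $H_j$ and then quote the smoothness and dimension results of Sections 4--5. As already noted, the density (\ref{Eq:st}) satisfies (\ref{Eq:Maternb}) with $\be_j=\a_j$ and $\ga=2\nu$, so (\ref{Eq:Hs}) gives
\[
H_j=\frac{\a_j}{2}\Big(2\nu-\sum_{i=1}^{d+1}\frac{1}{\a_i}\Big),\qquad j=1,\dots,d+1 .
\]
Because the model carries the constraint $\sum_i 1/\a_i<2\nu$, the factor $A:=2\nu-\sum_i 1/\a_i$ is strictly positive, so every $H_j>0$ and the ordering of the $H_j$ agrees with that of the $\a_j$; in particular $\min_{1\le j\le d+1}H_j=\tfrac12 A\,\min_{1\le\ell\le d+1}\a_\ell$. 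Hence
\[
\min_{1\le j\le d+1}H_j>1\iff A>\frac{2}{\min_{\ell}\a_\ell}\iff 2\nu>\sum_{i=1}^{d+1}\frac1{\a_i}+\frac{2}{\min_{1\le\ell\le d+1}\a_\ell},
\]
and, symmetrically, $\min_j H_j\le 1$ is equivalent to $2\nu\le\sum_i 1/\a_i+2/\min_\ell\a_\ell$. This dichotomy drives both parts. (A standing reduction: the theorems of Sections 4--5 are phrased for increment-stationary fields vanishing at $0$, so one first replaces the stationary $X$ by $X(\cdot)-X(0)$; this changes neither the variogram nor any differentiability or Hausdorff-dimension assertion about $X$.)

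For part (i): the hypothesis $2\nu>\sum_i 1/\a_i+2/\min_\ell\a_\ell$ is strictly stronger than $2\nu>\sum_i 1/\a_i$, so Condition (C) holds, and by the equivalence above $H_j>1$ for every $j$, i.e.\ (\ref{Eq:mqdiff2}) and (\ref{Eq:diff}) hold for all $j=1,\dots,d+1$. Mean square differentiability of $X$ is then precisely Corollary \ref{Th:MSdiff2}, and the existence of a version $\widetilde X$ satisfying (\ref{def:sample-diff}) is precisely Theorem \ref{Th:spdiff}(ii). Thus part (i) needs nothing beyond the parameter translation.

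For part (ii): I read ``$X$ is a fractal'' as ``$X$ is not a smooth field'', i.e.\ $X$ lacks a mean square partial derivative in at least one direction. By Corollary \ref{Th:MSdiff} this happens iff $H_j\le 1$ for some $j$, i.e.\ $\min_j H_j\le 1$, i.e.\ $2\nu\le\sum_i 1/\a_i+2/\min_\ell\a_\ell$ by the equivalence above. That this is a bona fide fractal regime is seen from Corollary \ref{Co:SI-dim}: reorder the exponents as in (\ref{Eq:Hs2}), so that $\overline H_1=1\wedge H_1=\min_j(1\wedge H_j)$; when $\min_j H_j<1$ one has $\overline H_1=H_1<1$, and taking $p=1$ (so $\X=X$) Corollary \ref{Co:SI-dim}(i) yields $\dim\Gr X([0,1]^{d+1})=(d+1)+1-H_1\notin\Z$ (the minimum in (\ref{Eq:dim2}) is attained at $k=1$ when $p=1$, as in the fractional Brownian motion case). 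Conversely, if $2\nu>\sum_i 1/\a_i+2/\min_\ell\a_\ell$, then part (i) makes the sample functions $C^1$ and every $\overline H_j=1\wedge H_j=1$, so Corollary \ref{Co:SI-dim} renders $\dim\X([0,1]^{d+1})$ and $\dim\Gr\X([0,1]^{d+1})$ integers for all $p$; $X$ is then not a fractal. This establishes the equivalence.

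The single delicate point, and essentially the only obstacle, is the boundary case $2\nu=\sum_i 1/\a_i+2/\min_\ell\a_\ell$, where $\min_j H_j=1$: then $X$ has no mean square partial derivative in the minimizing direction (Corollary \ref{Th:MSdiff}) and its variogram there grows like $r^2|\log r|$ (Lemma \ref{Le:St}), so $X$ is not $C^1$; yet all $\overline H_j=1$ and the Hausdorff dimensions of range, graph and level sets are integers. I would make explicit in the proof that ``fractal'' is meant in the sense ``not a smooth random field'', under which the ``$\le$'' in part (ii) is exactly correct; the strict inequality $2\nu<\sum_i 1/\a_i+2/\min_\ell\a_\ell$ is what corresponds to a strictly non-integer Hausdorff dimension.
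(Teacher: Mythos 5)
Your proof is correct and follows exactly the route the paper intends (the paper omits an explicit proof, simply noting that (\ref{Eq:st}) satisfies (\ref{Eq:Maternb}) with $\be_j=\a_j$, $\ga=2\nu$ and pointing to Corollaries \ref{Th:MSdiff}, \ref{Th:MSdiff2}, Theorem \ref{Th:spdiff} and Corollary \ref{Co:SI-dim}): the translation to $H_j=\a_j\big(\nu-\sum_\ell\frac{1}{2\a_\ell}\big)$ and the equivalence $\min_j H_j>1\iff 2\nu>\sum_j 1/\a_j+2/\min_\ell\a_\ell$ are exactly what is needed. Your explicit handling of the boundary case $\min_j H_j=1$, where the field is non-smooth yet all Hausdorff dimensions are integers, is a careful reading of the ``$\leq$'' in part (ii) that the paper itself glosses over.
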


The Hausdorff dimensions of various fractals generated by this
kind of models can also be computed using Corollary \ref{Co:SI-dim},
with $H_j=\a_j\(\nu-\sum_{\ell=1}^{d+1}\frac{1}{2\a_\ell}\)$,
$\overline{H}_j=1\wedge H_j$ for $j=1,\cdots, d+1$. We leave the
details to an interested reader.

\section{Proofs}

{\large \textbf{Proof of Proposition \ref{Le:int}}}

Since (\ref{Eq:int}) is equivalent to $\int_{\N}\(1\wedge
|\la|^2\)f(\la)d\la <\infty$, and $\int_{|\la|\leq 1}
|\la|^2f(\la)d\la<\infty$ is given, it is enough for us to show
\[
\int_{|\la|>1}\frac{d\la}{\(\sum_{j=1}^N|\la_j|^{\be_j}\)^{\ga}}
<\infty
\]
is equivalent to (\ref{Eq:leg}).

For this purpose, we appeal to the following fact:
Given positive constants $\be$ and $\gamma$, there exists
a finite constant $c_{15}$ such that for all $a > 0$,
\begin{equation}\label{Eq:Elem}
\begin{split}
\int_0^\infty\frac{d x}{(a+x^{\be})^{\ga}}
&= \left\{\begin{array}{ll}
 c_{15}\, a^{-(\ga-\frac{1}{\be})} &\hbox{if } \, \be \ga > 1,\\
 +\infty &\hbox{if } \, \be \ga \le 1.\\
 \end{array}
 \right.
\end{split}
\end{equation}
To verify this, we make a change of
variable  $x=a^{\frac{1}{\be}}y$ to obtain
\begin{equation*}
\int_0^\infty\frac{d x}{(a+x^{\be})^{\ga}}
= a^{-(\ga-\frac{1}{\be})}\int_0^\infty\frac{d y}{(1+y^\be)^\ga}.
\end{equation*}
Thus (\ref{Eq:Elem}) follows.

First we assume (\ref{Eq:leg}) holds. Since $|\lambda| > 1$
implies that $|\lambda_{j_0}| > \frac 1 {\sqrt{N}}$
for some $j_0 \in \{1, \ldots, N\}$. Without loss of
generality we assume $j_0 = 1$. Then by using (\ref{Eq:Elem})
$(N-1)$ times we obtain
\begin{equation*}
\begin{split}
\int_{|\la|>1}\frac{d\la}{\(\sum_{j=1}^N|\la_j|^{\be_j}\)^{\ga}}
&\leq 2^N\, \int_{\frac{1}{\sqrt{N}}}^\infty d\la_1 \underbrace{
\int_0^\infty \cdots \int_0^\infty}_{N-2}\frac{ d\la_2\cdots d\la_{N-1}}
{\(\sum_{j=1}^{N-1}|\la_j|^{\be_j}\)^{\ga-\frac{1}{\be_N}}}\\
&\leq c\,\int_{\frac{1}{\sqrt{N}}}^\infty\frac{d\la_1}
{\(|\la_1|^{\be_1}\)^{\ga-\sum_{j=2}^N\frac{1}{\be_j}}}<\infty,
\end{split}
\end{equation*}
because $\be_1\(\ga-\sum_{j=2}^N\frac{1}{\be_j}\)>1$.
This proves the sufficiency of (\ref{Eq:leg}).
%erefore, $f(\lambda)$ which satisfies (\ref{Eq:Maternb})
%is a legitimate spectral density.

To prove the converse, we assume (\ref{Eq:leg}) does not hold.
Then there is a unique integer $\tau \in\{1, \ldots, N\}$ such
that $\sum_{i=1}^{\tau -1} \frac 1 {\beta_i}< \gamma \le
\sum_{i=1}^{\tau} \frac 1 {\beta_i}.$ Note that
\[
\int_{|\la|>1}\frac{d\la}{\(\sum_{j=1}^N|\la_j|^{\be_j}\)^{\ga}}
\geq \underbrace{\int_{0}^\infty  \cdots \int_{0}^\infty}_{N-1}
\int_{1}^\infty  \frac{d\la_1\cdots d\la_{N}}
{\(\sum_{j=1}^{N}|\la_j|^{\be_j}\)^{\ga}}.
\]
By using (\ref{Eq:Elem}) and integrating $d\la_1\cdots d\la_{\tau}$,
we see that the last integral is divergent. This finishes
the proof. {\qed}

\vspace{.1in}
\noindent{\large \textbf{Proof of Lemma \ref{Le:St}}}

For any $s,t\in \R^N$,  denote
$\hat{s}_0=t$, $\hat{s}_1=(s_1, t_2,\cdots,t_N)$,
$\hat{s}_2=(s_1,s_2,t_3,\cdots,t_N)$, $\cdots$,
$\hat{s}_{N-1}=(s_1,\cdots,s_{N-1},t_N)$ and
$\hat{s}_N=s$. Let $h=s-t\triangleq(h_1,\cdots,h_N)$.
By Jensen's inequality, (\ref{Eq:sigmaa}) and (\ref{Eq:aniMat})
we can write
\begin{equation}\label{Eq:si2St}
\begin{split}
\E\(X(s)-X(t)\)^2 %&=\E\Big[\sum_{k=1}^N\(X(\hat{s}_k)-X(\hat{s}_{k-1})\)\Big]^2\\
&\leq N\,\sum_{k=1}^N \E\(X(\hat{s}_k)-X(\hat{s}_{k-1})\)^2\\
&=2N\,\sum_{k=1}^N\int_{\N}\(1-\cos (h_k\la_k)\)f(\la)d\la\\
&\le 2N\,\sum_{k=1}^N\int_{|\la|\leq1}\(1-\cos (h_k\la_k)\)f(\la)d\la \\
&\qquad\qquad  +2Nc_5\, \sum_{k=1}^N\int_{|\la|>1}
\(1-\cos (h_k\la_k)\)\frac{d\la}
{\(\sum_{i=1}^N|\la_i|^{H_i}\)^{Q+2}}\\
&\triangleq I_1+I_2.
\end{split}
\end{equation}
By using the inequality $1 - \cos x \le x^2$ we have
\begin{equation}\label{Eq:I1b}
I_1\leq 2N\bigg(\sum_{k=1}^N h_k^2\bigg)\,
\int_{|\la|\leq 1}|\la|^2 f(\la)d\la \leq c_{16}\, |s-t|^2
\end{equation}
for some positive and finite constant $c_{16}$.

To bound the $k$th integral in $I_2$, we note that, when $|\lambda| > 1$,
either $|\lambda_k |> \frac{1}{\sqrt{N}}$ or there is $j_0 \ne k$
such that $|\lambda_{j_0} |> \frac{1}{\sqrt{N}}$. We break the integral
according to these two possibilities.
\begin{equation}\label{Eq:I2b}
\begin{split}
&\int_{|\la|>1}\(1-\cos (h_k\la_k)\)\frac{d\la}
{\(\sum_{i=1}^N|\la_i|^{H_i}\)^{Q+2}}\\
&\leq 2\int_{\frac{1}{\sqrt{N}}}^\infty \(1-\cos (h_k\la_k)\) d\la_k
\int_{\R^{N-1}}\frac{d\la_1\cdots d\la_{k-1}
d\la_{k+1}\cdots d\la_{N}}{\(\sum_{i=1}^{N}|\la_i|^{H_i}\)^{Q+2}} \\
&\quad\qquad + 4 \int_0^1 \(1-\cos (h_k\la_k)\)d\la_k \int_{\frac{1}
{\sqrt{N}}}^\infty d\la_{j_0}\int_{\R^{N-2}}  \frac{d\la_{k, j_0}^{\vee}}
{\(\sum_{i=1}^{N}|\la_i|^{H_i}\)^{Q+2}}\\
&\triangleq I_3+I_4,
\end{split}
\end{equation}
where $d\la_{k, j_0}^{\vee}$ denotes integration in $\la_i$ ($i \ne k, j_0$).

By using (\ref{Eq:Elem}) repeatedly [$N-1$ times], we obtain
\begin{equation}\label{Eq:I2c}
\begin{split}
I_3& \le c\, \int_{\frac{1}{\sqrt{N}}}^\infty \frac{1-\cos (h_k\la_k)}
{|\la_k|^{2H_k+1}}\, d\la_k\\
&\le c\, \Bigg(\int_{\frac{1}{\sqrt{N}}}^{\frac{1}{|h_k|}}
\frac{h_k^2\la_k^2}{\la_k^{2H_k+1}}d\la_k+
\int_{\frac{1}{|h_k|}}^\infty \frac{1}{\la_k^{2H_k+1}}d\la_k\Bigg)\\
&\le c\, \sigma_k(|h_k|),
\end{split}
\end{equation}
where $\sigma_k$ is defined as in (\ref{Def:sigma}).

Similarly, we use (\ref{Eq:Elem}) $N-2$ times to get
\begin{equation}
\begin{split}\label{Eq:I4b}
I_4&\leq c\, \int_0^1 \(1-\cos (h_k\la_k)\)d\la_k
\int_{\frac{1}{\sqrt{N}}}^\infty \frac{d\la_{j_0}}{\(\la_k^{H_k}
+ \la_{j_0}^{H_{j_0}}\)^{2+\frac{1}{H_k}+\frac{1}{H_{j_0}}}}\\
&\le c\, \int_0^1 \(1-\cos (h_k\la_k)\)d\la_k \int_{\frac{1}
{\sqrt{N}}}^\infty \frac{d\la_{j_0}}{ \la_{j_0}^{2H_{j_0}
+1+\frac{H_{j_0}}{H_k}}}\\
&\leq c\, |h_k|^2.
\end{split}
\end{equation}
Combining (\ref{Eq:si2St})--(\ref{Eq:I4b}) yields the
upper bound in (\ref{Eq:sigmab}).

Next we prove the lower bound in (\ref{Eq:sigmab}).
By (\ref{Eq:sigmaa})  and (\ref{Eq:aniMat}) we have
\begin{equation}
\E\(X(s)-X(t)\)^2  \ge c_4\, \int_{|\la|>1}\(1-\cos\l s-t,\la\r\)\,
\frac{d\la}{\rho(\la)^{Q+2}},
\end{equation}
where $\rho(\la)=\sum_{j=1}^N|\la_j|^{H_j},\,\la\in\N$.
So, for the lower bound of $\E\(X(s)-X(t)\)^2$, it is enough to
show that for every $j=1,\cdots,N$ and all
$h \in \R^N$, we have
\begin{equation}\label{Eq:lower}
\int_{|\la|>1}\(1-\cos\l h,\la\r\)\,
\frac{d\la}{\rho(\la)^{Q+2}}\geq c \sigma_j(|h_j|),
\end{equation}
where $c$ is a positive constant.

We only prove (\ref{Eq:lower}) for $j =1$, and the other
cases are similar. Fix $h \in \R^N$
with $|h_1| > 0$ [otherwise there is nothing to prove]
and we make a change of variables
\[
y_{\ell}=\rho(h)^{H_{\ell}^{-1}}\la_{\ell}, \qquad
\forall \ell=1,\cdots,N.
\]
We consider a subset
of the integration region defined by
$$
D(h)= \Big\{y\in \R^N: |y_1|\in [\rho(h)^{H_1^{-1}}, 1],\
|y_\ell| \le 1\ \hbox{ and } y_\ell h_\ell > 0
\hbox{ for } 1\le \ell \le N\Big\}.
$$
Since $\rho(\la)=\rho(y)/\rho(h)$, we have
\begin{equation}\label{Eq:low}
\begin{split}
&\int_{|\la|>1}\(1-\cos\l h,\la\r\)\frac{d\la}{\rho(\la)^{Q+2}}
\ge \rho(h)^2\, \int_{D(h)} \frac{1-\cos\Big(\sum_{\ell=1}^N
h_{\ell}\rho(h)^{-H_{\ell}^{-1}}y_{\ell}\Big)}
{\(\sum_{\ell=1}^N|y_\ell|^{H_\ell}\)^{Q+2}}\, d y.
\end{split}
\end{equation}
By using the inequality $1- \cos x\ge c\, x^2$
for all $|x|\le N$, where $c>0$ is a constant, and the fact that
$h_\ell y_\ell > 0$ for all $1 \le \ell \le N$, we derive that the
last integral is at least [up to a constant]
\begin{equation}\label{Eq:low2}
\begin{split}
&\rho(h)^2\int_{D(h)} \frac{\Big(\sum_{\ell=1}^N
h_{\ell}\rho(h)^{-H_{\ell}^{-1}}y_{\ell}\Big)^2}
{\(\sum_{\ell=1}^N|y_\ell|^{H_\ell}\)^{Q+2}}\, d y \\
&\geq \rho(h)^2\int_{\rho(h)^{H_1^{-1}}}^1 h_1^2
\rho(h)^{-\frac{2}{H_1}}y_1^2d y_1
\underbrace{\int_0^1\cdots\int_0^1}_{N-1}
\frac{d y_2\cdots d y_N}
{\(\sum_{\ell=1}^N|y_\ell|^{H_\ell}\)^{Q+2}}\\
&\geq c\, \rho(h)^{2-\frac{2}{H_1}}\, h_1^2\,
\int_{\rho(h)^{H_1^{-1}}}^1\frac{y_1^2 d y_1}
{\(y_1^{H_1}\)^{\frac{1}{H_1}+2}}\\
&= c\,  \sigma_1(|h_1|).
\end{split}
 \end{equation}
This proves (\ref{Eq:lower}) and hence Lemma \ref{Le:St}.
{\qed}

In order to prove Theorem \ref{Th:pred-error}, we will make use of
the following lemma which implies that the prediction error of $X$
is determined by the behavior of the spectral density $f(\la)$ at
infinity.
\begin{lemma}\label{Le:main}
Assume (\ref{Eq:Maternb}) is satisfied, then for any fixed
constant $T>0$, there exists a positive and finite
constant $c_{17}$ such that for all functions $g$ of the form
\[
g(\la)=\sum_{k=1}^n a_k\(e^{i\l t^k,\la\r}-1\),
\]
where $a_k\in\R$ and $t^k\in [-T,T]^N$, we have
\begin{equation}
|g(\la)|\leq c_{17}\,|\la|\bigg(\int_{\N}|g(\xi)|^2f(\xi)d\xi\bigg)^{1/2}
\end{equation}
for all $\la \in \R^N$ that satisfy $|\la|\le 1$.
\end{lemma}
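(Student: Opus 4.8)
The plan is to reduce the claim to a one-dimensional estimate in a well-chosen direction and then exploit the lower bound in Condition (C) on the spectral density. Fix $\la\in\R^N$ with $0<|\la|\le 1$; the case $\la=0$ is trivial since $g(0)=0$. Since $g(\la)=\sum_k a_k(e^{i\l t^k,\la\r}-1)$ and $|t^k|\le T\sqrt N$, a first-order Taylor estimate gives $|e^{i\l t^k,\la\r}-1|\le |\l t^k,\la\r|\le c\,|\la|$, but that alone does not connect $|g(\la)|$ to the weighted $L^2$-norm of $g$; the point is rather that $g$, extended to all of $\R^N$, is (the restriction of) a function whose behaviour near the origin is controlled by its behaviour at infinity through the spectral density. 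The key observation is that for a fixed direction $\theta\in S^{N-1}$ the function $\mu\mapsto g(\mu\theta)$ on $\R$ is a finite linear combination of the functions $e^{i\mu\l t^k,\theta\r}-1$, hence is an entire function of exponential type at most $R:=T\sqrt N$ in the variable $\mu$; moreover it vanishes at $\mu=0$.

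First I would show that it suffices to prove, for each unit vector $\theta$, a bound of the form
\begin{equation}\label{Eq:one-dim}
|g(\mu\theta)|^2\le c\,\mu^2\int_{1}^{\infty}|g(r\theta)|^2\,r^{N-1-\gamma\cdot(\text{something})}\,dr
\end{equation}
for $0<\mu\le 1$, and then integrate in $\theta$ after inserting the lower bound $f(\xi)\ge c_1(\sum_j|\xi_j|^{\beta_j})^{-\gamma}$ from \eqref{Eq:Maternb}. More precisely, writing the right-hand side of the asserted inequality in polar-like coordinates adapted to the anisotropy, $\int_{\N}|g(\xi)|^2 f(\xi)\,d\xi\ge c_1\int_{|\xi|>1}|g(\xi)|^2(\sum_j|\xi_j|^{\beta_j})^{-\gamma}\,d\xi$, and the divergence/convergence bookkeeping of Proposition 2.1 guarantees that the weight $(\sum_j|\xi_j|^{\beta_j})^{-\gamma}$ is integrable at infinity, so this lower integral is a genuine (possibly small) multiple of $\int |g|^2$ over a region where we have good control. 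The heart of the matter is then a one-variable lemma: if $\phi$ is entire of exponential type $\le R$ with $\phi(0)=0$, then for $0<\mu\le 1$ one has $|\phi(\mu)|\le c_R\,\mu\,\bigl(\int_{|r|>1}|\phi(r)|^2 w(r)\,dr\bigr)^{1/2}$ for any admissible polynomial-type weight $w$; this follows by writing $\phi(\mu)=\mu\psi(\mu)$ with $\psi$ entire of the same exponential type, applying a Bernstein/Plancherel–Pólya-type inequality to control $\psi$ on $[-1,1]$ by its $L^2$ mass on $\{|r|>1\}$ against the weight, and using that exponential type bounds the local sup-norm by the tail $L^2$-norm.

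The main obstacle I anticipate is the anisotropy: the exponential-type argument is naturally one-dimensional, but the weight $(\sum_j|\xi_j|^{\beta_j})^{-\gamma}$ and the integrability condition \eqref{Eq:leg} are genuinely $N$-dimensional and direction-dependent (the effective decay rate along the $j$th axis is governed by $\beta_j$, not by a single exponent). To handle this I would not use spherical polar coordinates but rather the substitution $\xi_j=r^{1/H_j}\eta_j$ (the same scaling used in the proof of Lemma 2.5), under which $\sum_j|\xi_j|^{H_j}$ scales linearly in $r$ and the weight in \eqref{Eq:aniMat} becomes $r^{-(Q+2)}$ times a function of $\eta$; then the Jacobian contributes $r^{\sum_j 1/H_j-1}=r^{Q-1}$, and the net radial weight at infinity is $r^{Q-1-(Q+2)}=r^{-3}$, which is integrable — this is exactly where $Q=\sum 1/H_j$, equivalently \eqref{Eq:leg}, is used. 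Restricting the $\eta$-variables to a compact set (as in the definition of $D(h)$ in the proof of Lemma 2.5) reduces everything to the one-dimensional estimate along the $\eta_1$-axis, at which point the $\mu^2=|\la|^2$ factor emerges from the single vanishing of $g$ at the origin. Finally, summing the contributions of the $N$ coordinate directions and taking square roots yields the stated inequality with a constant $c_{17}$ depending only on $N$, $T$, $c_1$ and the exponents $\beta_j,\gamma$.
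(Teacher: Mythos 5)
Your plan diverges substantially from the paper's proof, and as written it has two genuine gaps. The most serious is the slicing step. You propose to prove, for the single direction $\theta=\la/|\la|$, an estimate of the form $|g(\mu\theta)|^2\leq c\,\mu^2\int_1^\infty |g(r\theta)|^2 w(r)\,dr$ and then ``integrate in $\theta$'' to bring in $\int_{\N}|g(\xi)|^2 f(\xi)\,d\xi$. But the left-hand side of the target inequality is the value of $g$ at one point, hence lives on one ray, while a line integral of $|g|^2$ over a single ray is a set of Lebesgue measure zero in $\R^N$ and is \emph{not} dominated by the $N$-dimensional integral $\int_{\N}|g|^2 f$. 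Averaging over a positive-measure cone of directions would repair the right-hand side but destroys the left-hand side: to pass from an average of $|g(\mu\theta')|$ over nearby $\theta'$ back to $|g(\la)|$ you need a modulus of continuity for $g$, and the natural bound $|\nabla g|\leq \sum_k |a_k||t^k|$ involves $\sum_k|a_k|$, which is precisely the quantity that is \emph{not} controlled by $\int|g|^2f$. The second gap is the one-variable lemma itself: it is asserted, not proved, and it does not follow from Bernstein or Plancherel--P\'olya. Those results require $\phi\in L^2(\R)$ or a sup-norm bound on $\R$; here $\phi(\mu)=g(\mu\theta)$ need not be in $L^2(\R)$, its sup-norm is $\sum_k|a_k|$ (uncontrolled), and since your radial weight $w(r)\sim r^{-3}$ is integrable at infinity, the weighted tail norm $\int_{|r|>1}|\phi|^2w$ does not dominate the unweighted $L^2$ norm on $\{|r|>1\}$. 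So the key inequality you lean on is unproved and of doubtful validity in the stated generality.

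For comparison, the paper's proof is a two-line reduction: condition (\ref{Eq:Maternb}) implies the crude \emph{isotropic} bound $f(\la)\geq C|\la|^{-\eta}$ for $|\la|$ large (with $\eta=\ga\max_j\be_j$), and then the result is quoted from the proof of Lemma 2.2 in Xiao (2007). That argument exploits the specific structure of $g$ as the Fourier transform of the signed measure $\sum_k a_k\de_{t^k}$ supported in $[-T,T]^N$: choosing $\psi$ smooth, compactly supported, with $\psi\equiv 1$ on $[-T,T]^N$, one has the reproducing identity $g(\la)=(2\pi)^{-N}\int_{\N} g(\xi)\,\hat{\psi}(\xi-\la)\,d\xi$, and subtracting the same identity at $\la=0$ (where $g$ vanishes) gives $g(\la)=(2\pi)^{-N}\int g(\xi)\big[\hat{\psi}(\xi-\la)-\hat{\psi}(\xi)\big]d\xi$; the factor $|\la|$ comes from $|\hat{\psi}(\xi-\la)-\hat{\psi}(\xi)|\leq |\la|\,\Phi(\xi)$ with $\Phi$ rapidly decreasing, and Cauchy--Schwarz against $f$ and $1/f$ closes the argument because $1/f$ grows only polynomially where the lower bound holds. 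The lesson is that all the anisotropic machinery in your plan (the scaling $\xi_j=r^{1/H_j}\eta_j$, the exponents $H_j$, $Q$) is beside the point for this lemma: only a polynomial isotropic lower bound on $f$ at infinity is needed, and the correct vehicle is an $N$-dimensional mollifier identity rather than a one-dimensional exponential-type argument.
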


\begin{proof}\ By (\ref{Eq:Maternb}), we can find positive
constants $C$ and $\eta$, such that
\[
f(\la)\geq\frac{C}{|\la|^\eta},
\quad\forall\la \in \N  \text{ with } |\la|\text{ large enough}.
\]
Then the desired result follows from the proof of Lemma 2.2
in Xiao (2007).
\end{proof}

\vspace{.1in} \noindent{\large \textbf{Proof of Theorem
\ref{Th:pred-error}}}

First, let's prove the upper bound in (\ref{Eq:C3'}).
By Lemma \ref{Le:St} we have
\begin{equation}\label{Eq:pred1}
\begin{split}
\Var\Big(X(u)|X(t^1),\cdots,X(t^n)\Big)
%&\leq\min_{0\leq k\leq n}\Var\big(X(u)|X(t^k)\big)\\
&\leq\min_{0\leq k\leq n}\E\big(X(u)-X(t^k)\big)^2\\
&\leq c_9\min_{0\leq k\leq n}\sum_{j=1}^N
\sigma_j\big(|u_j-t^k_j|\big).
\end{split}
\end{equation}

In order to prove the lower bound for the conditional
variance in (\ref{Eq:C3'}), we denote
$r\equiv \min\limits_{0\leq k\leq
n}\sum_{j=1}^N|u_j-t^k_j|^{H_j}$. Working in the Hilbert space
setting, the conditional variance is just the square of 
$L^2(\P)$-distance of $X(u)$ from the subspace generated by
$\{X(t^1),\cdots,X(t^n)\}$, so it is sufficient to
prove that for all $a_k\in \R$, $1\leq k\leq n$,\\
\begin{equation}\label{Eq:ms}
\E\bigg(X(u)-\sum_{k=1}^n a_kX(t^k)\bigg)^2\geq c_6\, r^2,
\end{equation}
where $c_6$ is a positive constant which may only depend on
$H_1, \ldots, H_N$ and $N$.

By using the stochastic integral representation (\ref{Eq:eks})
of $X$, the left hand side of (\ref{Eq:ms}) can be written as
\begin{equation}\label{Eq:mse}
\E\bigg(X(u)-\sum_{k=1}^n a_kX(t^k)\bigg)^2=\int_{\N}\bigg|e^{i\l
u,\la\r}-1-\sum_{k=1}^n a_k(e^{i\l t^k,\la\r}-1)\bigg|^2f(\la)\,
d\la.
\end{equation}
Hence, we only need to show
\begin{equation}\label{Eq:msel}
\int_{\N}\bigg|e^{i\l u,\la\r}-\sum_{k=0}^n a_k e^{i\l
t^k,\la\r}\bigg|^2f(\la)\,d\la\geq c_6\, r^2,
\end{equation}
where $t^0=0$ and $a_0=1-\sum_{k=1}^n a_k$.
\medskip

We choose a function $\de(\cdot): \N\to[0,1]$ in $C^\infty(\N)$ [the 
space of all infinitely differentiable functions defined on $\R^N$] 
such that $\de(0)=1$ and it vanishes outside the open set
$\big\{t\in\N:\,\, \sum_{j=1}^N |t_j|^{H_j}<1\big\}$. Denote by
$\hat{\de}$ the Fourier transform of $\de$. Then one can verify that
$\hat{\de}(\cdot)\in C^\infty(\N)$ as well and $\hat{\de}(\la)$
decays rapidly as $|\la|\to\infty$.\\

Let $E$ be the $N\times N$ diagonal matrix with $H_1^{-1},
\cdots,H_N^{-1}$ on its diagonal and let $\de_r(t)=r^{-Q}\,
\de(r^{-E}t)$ for all $t \in \R^N$. Then the
inverse Fourier transformation and a change of variables yield
\begin{equation}\label{Eq:Fourier}
\de_r(t)=(2\pi)^{-N}\int_{\N}e^{-i\l
t,\la\r}\hat{\de}(r^E\la)\,d\la.
\end{equation}
Since $\min\big\{\sum_{j=1}^N |u_j-t^k_j|^{H_j}: 0\leq k\leq
n\big\}\geq r$, we have $\de_r(u-t^k)=0$ for $k=0,1,\cdots,n$.
This and (\ref{Eq:Fourier}) together imply that
\begin{equation}\label{Eq:J}
\begin{split}
J:&=\int_{\N}\Bigg(e^{i\l u,\la\r}-\sum_{k=0}^n a_k e^{i\l
t^k,\la\r}\Bigg)e^{-i\l u,\la\r}\hat{\de}(r^E\la)\,d\la\\
&=(2\pi)^N\Bigg(\de_r(0)-\sum_{k=0}^n a_k\de_r(u-t^k)\Bigg)\\
&=(2\pi)^N r^{-Q}.
\end{split}
\end{equation}

Now we split the integral in (\ref{Eq:J}) over $\{\la: |\la|<1\}$
and $\{\la: |\la|\geq 1\}$ and denote the two integrals
by $I_1$ and $I_2$, respectively. It follows from Lemma
\ref{Le:main} that
\begin{equation}\label{Eq:I1}
\begin{split}
I_1&\leq\int_{|\la|<1}\Big|e^{i\l u,\la\r}-\sum_{k=0}^n a_k
e^{i\l t^k,\la\r}\Big||\hat{\de}(r^E\la)|d\la\\
&\leq c_{17}\, \Bigg[\int_{\N}\Big|e^{i\l u,\la\r}-\sum_{k=0}^n a_k
e^{i\l t^k,\la\r}\Big|^2 \,f(\la)\,d\la\Bigg]^{1/2}
\int_{|\la|<1}|\la||\hat{\de}(r^E\la)|d\la\\
&\leq c_{18}\,\Bigg[\E\bigg(X(u)-\sum_{k=1}^n
a_kX(t^k)\bigg)^2\Bigg]^{1/2},
\end{split}
\end{equation}
where the last inequality follows from (\ref{Eq:mse}) and the
boundedness of $\hat{\de}$.

On the other hand, by the Cauchy-Schwarz inequality and
(\ref{Eq:mse}), we have
\begin{equation}\label{Eq:I2}
\begin{split}
I_2^2&\leq\int_{|\la|\geq 1}\bigg|e^{i\l u,\la\r}-\sum_{k=0}^n
a_k e^{i\l t^k,\la\r}\bigg|^2f(\la)d\la\int_{|\la|\geq 1}
\frac{1}{f(\la)}|\hat{\de}(r^E\la)|^2\,d\la\\
&\leq\E\bigg(X(u)-\sum_{k=1}^n a_kX(t^k)\bigg)^2r^{-Q}
\int_{|\la|\geq 1}\frac{1}{f(r^{-E}\la)}|\hat{\de}(\la)|^2\,d\la\\
&=\E\bigg(X(u)-\sum_{k=1}^n a_kX(t^k)\bigg)^2r^{-2Q-2}
\int_{|\la|\geq 1}\frac{1}{f(\la)}|\hat{\de}(\la)|^2\,d\la.\\
\end{split}
\end{equation}
The last integral is convergent thanks to the fast decay of
$\hat{\de}(\la)$. Finally, combining (\ref{Eq:J}), (\ref{Eq:I1})
and (\ref{Eq:I2}), we get
\[
(2\pi)^N r^{-Q}\leq c_{19}\,\Bigg[\E\bigg(X(u)-\sum_{k=1}^n
a_kX(t^k)\bigg)^2\Bigg]^{1/2}r^{-Q-1}.
\]
Henceforth (\ref{Eq:ms}) follows, and the theorem was proved
because of (\ref{Eq:pred1}) and (\ref{Eq:ms}). {\qed}

%\vspace{.1in} \noindent{\large \textit{Proof of Lemma
%\ref{Le:Pre2}}}

%By Lemma \ref{Le:St}, we get
% \[
%\Var\(X(t)|X(s)\)\leq \E\(X(t)-X(s)\)^2
%\leq c_8\sum_{j=1}^N|s_j-t_j|^{2\bar{H}_j},
%\]
%where $c_8$ is a positive constant. Also
%\[
%\Var\(X(t)|X(s)\)=\frac{\Big(\E\(X(t)-X(s)\)^2-
%(\si_t-\si_s)^2\Big)\Big((\si_t+\si_s)^2-\E\(X(t)-X(s)\)^2\Big)}{4\si^2_s},
%\]
%where $\si_t^2=\E(X(t))^2$, $\si_s^2=\E(X(s))^2$. Hence, by Lemma
%\ref{Le:St}, we get
%\[
%\Var\(X(t)|X(s)\)\geq c\,\E\(X(t)-X(s)\)^2
%\geq c_7\sum_{j=1}^N|s_j-t_j|^{2\bar{H}_j},\]
%where $c$ and $c_7$ are two positive constants.  {\qed}

\vspace{.1in}
\noindent{\large \textbf{Proof of Theorem \ref{Th:diff}}}

For $t\in\N$, it is known that $X_{j,h}=\frac{X(t+he_j)-X(t)}
{h}$ converges in $L^2$, as $h\to 0$, if and only if
\[D_{h,k}\triangleq \frac{1}{h k}\E\Big\{\(X(t+he_j)-X(t)\)
\(X(t+k e_j)-X(t)\)\Big\}\]
converges to a constant as $h,k\to 0$. However,
\begin{equation}\label{Eq:diffs}
\begin{split}
D_{h,k}&=\frac{1}{h k}\Big\{C(t+he_j,t+k e_j)-C(t,t+k e_j)
-C(t+he_j,t)+C(t,t)\Big\}\\
      &=\frac{1}{2h k}\Big\{v(he_j)+v(k e_j)-v\((h-k)e_j\)\Big\}.
     \end{split}
    \end{equation}
So the first part of the theorem is proved.
%$X_j'(t)$ exists if and only if (\ref{Eq:msd-nsc}) holds.
For the second part, it is clear that if $v(t)$ has
second-order partial derivatives at 0 in the $j$-th
direction then (\ref{Eq:msd-nsc}) holds [thanks to Taylor's
theorem]. On the other hand, if (\ref{Eq:msd-nsc}) holds,
then by taking $h=k \to 0$ in (\ref{Eq:diffs}) we see that
$\partial v/\partial t_j(0) = 0$. This fact, together with
(\ref{Eq:msd-nsc}), implies that
\begin{equation*}
\begin{split}
\frac{\partial^2 v} {\partial t_j^2}(0)
&= \lim_{k\to 0} \frac 1 k \lim_{h \to 0} \frac{v((k+h)e_j) - v(ke_j)} h\\
&= \lim_{k\to 0} \lim_{h \to 0} \frac{v((k+h)e_j) - v(ke_j) + v(he_j)} {hk}
\end{split}
\end{equation*}
exists. This finishes the proof of Theorem \ref{Th:diff}. {\qed}

\vspace{.1in} \noindent{\large \textbf{Proof of Corollary
\ref{Th:MSdiff}}}

By Theorem \ref{Th:diff} it amounts to show that
$\lim\limits_{h, k \to 0}D_{h,k}$ exists if and
only if (\ref{Eq:mqdiff}) holds. [i.e., $\be_j\(\ga-
\sum_{i=1}^N\frac{1}{\be_i}\)>2$.]
It follows from (\ref{Eq:diffs}) and (\ref{Eq:sigmaa}) that
\begin{equation}\label{Eq:diffs2}
D_{h,k}=\int_{\N}\frac{1-\cos\l he_j,\la\r -
\cos\l k e_j,\la\r + \cos\l(h-k)e_j,\la\r} {h k}\, f(\la)\,d\la.
\end{equation}
To prove the sufficiency of (\ref{Eq:mqdiff}), we note that for
each fixed $\la\in\N$,
\begin{equation}\label{Eq:diffs2b}
\lim_{h,k\to 0}\frac{1-\cos(h\la_j) -
\cos (k \la_j) + \cos((h-k)\la_j)}
{h k}=\la_j^2
\end{equation}
and by the mean value theorem,
\[
\bigg|\frac{1-\cos(h\la_j) -
\cos (k \la_j) + \cos((h-k)\la_j)}
{h k}\bigg| \le \lambda_j^2.
\]

Now we assume (\ref{Eq:mqdiff})
holds. Then, as in the
proof of Proposition \ref{Le:int}, we have
\[
\int_{\la \in \R^N: |\la_j|>1}\frac{\la_j^2\,d\la}
{\(\sum_{i=1}^N|\la_i|^{\be_i}\)^{\ga}}
\le c \int_1^\infty \frac{\la_j^2\, d\la_j}
 {\la_j^{\beta_j (\ga - \sum_{i\ne j}\frac{1}{\be_i})}}<\infty.
\]
This implies $ \int_{\N}\la_j^2f(\la)d\la < \infty.$
By (\ref{Eq:diffs2}), (\ref{Eq:diffs2b}) and the
dominated convergence theorem, we obtain
\[
\lim_{h,k\to 0}D_{h,k}=\int_{\N}\la_j^2f(\la)d\la.
\]

To prove the necessity of (\ref{Eq:mqdiff}), we assume
$
\be_j\big(\ga-\sum_{i=1}^N\frac{1}{\be_i}\big)\le2.
$
Then, as in the proof of Proposition \ref{Le:int}, we have
\begin{equation}\label{Eq:div}
\int_{\la \in \R^N: |\la_j|>1}\frac{\la_j^2\,d\la}
{\(\sum_{i=1}^N|\la_i|^{\be_i}\)^{\ga}}
=\infty.
\end{equation}
We let $h=k \downarrow 0$ and use Fatou's lemma to (\ref{Eq:diffs2})
[note the integrand is non-negative] to derive
\[
\liminf_{h=k\downarrow 0}D_{h,k} \ge \int_{\R^N} \la_j^2f(\la)d\la
= \infty,
\]
where the last equality follows from (\ref{Eq:div}). So
$\lim\limits_{h, k \to 0}D_{h,k}$ does not exist and the proof
is finished.  {\qed}

\vspace{.1in}
\noindent{\large
\textbf{Proof of Theorem \ref{Th:msdifferentiability}}}

If $v(t)$ has continuous second-order partial derivatives,
then Theorem \ref{Th:diff} implies that $X$ has mean square
partial derivatives in all $N$ directions. Let $\nabla_X(t)
=\(X'_1(t),\cdots,X'_N(t)\)^T$ and we show that it satisfies
(\ref{Eq:msd}).

For any unit vector $u$ in $\N$, we can write it as
$u=\sum_{j=1}^N u_j e_j$ and $\sum_{j=1}^N u_j^2=1$. So
$u^T\nabla_X(t)=\sum_{j=1}^N u_j X'_j(t)$. Hence
\begin{equation}\label{Eq:totdev}
\begin{split}
&\E\Bigg(\frac{X(t+hu)-X(t)}{h}-u^T\nabla_X(t)\Bigg)^2\\
&=\E\Bigg(\frac{X(t+h u)-X(t)}{h}-\sum_{j=1}^N u_j X'_j(t)\Bigg)^2\\
&=\frac{1}{h^2}v(h u)+\E\bigg(\sum_{j=1}^N u_j X'_j(t)\bigg)^2-
\frac{2}{h}\sum_{j=1}^N u_j\Big(\E X(t+h u)X'_j(t)-\E X(t)X'_j(t)\Big)\\
&=\frac{1}{h^2}v(h u)+\E\bigg(\sum_{j=1}^N u_j X'_j(t)\bigg)^2
-\frac{1}{h}\sum_{j=1}^N u_j v'_j(h u).\\
\end{split}
\end{equation}
The last equality in (\ref{Eq:totdev}) follows from (\ref{Eq:crosscov}).

Since $v(t)$ is an even function with $v(0)=0$ and has
continuous second-order partial and mixed partial derivatives,
then Taylor's theorem implies
\begin{equation}\label{Eq:1stterm}
\begin{split}
\lim_{h\to 0}\frac{1}{h^2}v(h u)
=\lim_{h\to 0}\frac{v(h u)+v(-h u)-2 v(0)}{2h^2}
=\frac{1}{2} u^T\Omega(0)u,
\end{split}
\end{equation}
where $\Omega(0)$ is an $N\times N$ matrix, with
$\(\Omega(0)\)_{i j}=v_{ij}^{''}(0)$ for $i\neq j$,
and $\(\Omega(0)\)_{i i}=v_{i}^{''}(0)$. 	

For the second term in the last line of (\ref{Eq:totdev}),
note that for any $i$, $j=1,\cdots,N$ and $i\neq j$,
and any $l>0$, $m>0$,
\begin{equation}\label{Eq:crosscovij}
\begin{split}
&\E\Bigg(\frac{X(t+l e_i)-X(t)}{l}\frac{X(t+m e_j)-X(t)}{m}\Bigg)\\
=&\frac{1}{l m}\E\Big(X(t+l e_i)X(t+m e_j)-X(t)X(t+m e_j)
-X(t+l e_i)X(t)+X^2(t)\Big)\\
=&\frac{1}{2l m}\Big(v(l e_i)+v(-m e_j)-v(l e_i-m e_j)\Big).\\
\end{split}
\end{equation}
Let $l\to 0$, $m\to 0$, then the last term in (\ref{Eq:crosscovij}) goes
to $\frac{1}{2}v_{ij}^{''}(0)$, where $v_{ij}^{''}(0)$ is
the second-order mixed partial derivative of $v$ at 0 in
the $i$-th and $j$-th directions. By Theorem \ref{Th:deriv},
we have $\E\(X'_j(t)\)^2=\frac{1}{2}v_j^{''}(0)$, for
$j=1,\cdots,N$. Hence 				
\[
\E\bigg(\sum_{j=1}^N u_j X'_j(t)\bigg)^2=\frac{1}{2}u^T\Omega(0)u.
\]

Finally for the last term in (\ref{Eq:totdev}), we use
Taylor's theorem again to derive
\[
\lim_{h\to 0}\frac{1}{h}\sum_{j=1}^N u_j v'_j(h u)
= u^T\Omega(0)u.
\]
Combining this with (\ref{Eq:1stterm}) and (\ref{Eq:crosscovij})
shows that (\ref{Eq:totdev}) goes to 0, as $h\to 0$.
This finishes the proof. {\qed}
	
\vspace{.1in}
\noindent{\large \textbf{Proof of Theorem
\ref{Th:spdiff}}}

Under (\ref{Eq:mqdiff}), Corollary \ref{Th:MSdiff} ensures that
the mean square partial derivative
$X'_j(t)$ exists. In order to show that $X'_j(t)$ has a continuous
version, by Kolmorogov's continuity theorem or general
Gaussian theory [cf. Adler (1981), Adler and Taylor (2007)],
it is enough to show there exist
constants $c_{20}>0$ and $\eta>0$ such that
\begin{equation}\label{Eq:moment}
\E\big[X'_j(s)-X'_j(t)\big]^2\leq c_{20}\,|s-t|^{\eta}.
\end{equation}
Recall that
\begin{equation*}
\begin{split}
C(s,t)&=\int_{\N}\(e^{i\l s,\la\r}-1\)\(e^{-i\l t,\la\r}-1\)f(\la)d\la\\
&=\int_{\N}\Big[\cos\l s-t,\la\r-\cos\l t,\la\r-\cos\l s,\la\r+1\Big]f(\la)d\la.
\end{split}
\end{equation*}
Thanks to (\ref{Eq:mqdiff}), we derive
\[
\frac{\partial C(s,t)}{\partial s_j}
=\int_{\N}\Big[-\la_j\sin\l s-t,\la\r
+\la_j\sin\l s,\la\r\Big]f(\la)\,d\la
\]
and
\[
 \frac{\partial C(s,t)}{\partial s_j\partial t_j}
 =\int_{\N}\la_j^2\cos\l s-t,\la\r f(\la)\, d\la.
\]
So
\begin{equation*}
\begin{split}
\E\big(X'_j(s)-X'_j(t)\big)^2
&=\E\(X'_j(s)\)^2+\E\(X'_j(t)\)^2-2\E\(X'_j(s)X'_j(t)\)\\
&=2\int_{\N}\la_j^2\(1-\cos\l s-t,\la\r\)f(\la)d\la.
\end{split}
\end{equation*}

The rest of the proof is similar to that of Lemma \ref{Le:St}.
Denote $\hat{s}_0=t$, $\hat{s}_1=(s_1, t_2,\cdots,t_N)$,
$\hat{s}_2 =(s_1,s_2,t_3,\cdots,t_N)$, $\cdots$,
$\hat{s}_{N-1}=(s_1,\cdots,s_{N-1},t_N)$ and $\hat{s}_N=s$.
Then
\begin{equation}\label{Eq:de-inc1}
\begin{split}
\E\(X'_j(s)-X'_j(t)\)^2
&\leq N \sum_{k=1}^N\E\(X'_j(\hat{s}_k)-X'_j(\hat{s}_{k-1})\)^2\\
&=2N\sum_{k=1}^N\Bigg\{\int_{|\la|\leq 1}\la_j^2
\(1-\cos( s_k-t_k)\la_k\)f(\la)d\la\\
&\quad\quad\qquad\qquad\qquad +\int_{|\la|>1}\la_j^2
\(1-\cos(s_k-t_k)\la_k\)f(\la)d\la\Bigg\}\\
&\leq c_{21} \, |s-t|^2+ c_{22}\, \sum_{k=1}^N
\int_{|\la|>1}\frac{\(1-\cos( s_k-t_k)\la_k\)\la_j^2}
{\(\sum_{i=1}^N|\la_i|^{H_i}\)^{Q+2}}\, d\la.
\end{split}
\end{equation}
Now we estimate the last $N$ integrals in (\ref{Eq:de-inc1}).
For simplicity of notation, we only consider the case
when $k=j$ [the cases of $k \ne j$ are similar]. Denote
$h_k=s_k-t_k$ and, similar to (\ref{Eq:I2b}),
(\ref{Eq:I2c}) and (\ref{Eq:I4b}), we derive
\begin{equation*}
\begin{split}
&\int_{|\la|>1}\frac{\(1-\cos(s_k-t_k)\la_k\)\, \la_k^2}
{\(\sum_{i=1}^N|\la_i|^{H_i}\)^{Q+2}}\, d\la\\
&\leq 2 \int_{\frac1 {\sqrt{N}}}^\infty
\(1-\cos( h_k\la_k)\) \lambda_k^2\,d\la_k
\int_{\R^{N-1}}\frac{d\la_1\cdots d\la_{k-1}
d\la_{k+1}\cdots d\la_{N}}
{\(\sum_{i=1}^N|\la_i|^{H_i}\)^{Q+2}} \\
&\qquad + 2 \int_0^1\(1-\cos( h_k\la_k)\) \lambda_k^2\,d\la_k
\int_{\frac1 {\sqrt{N}}}^\infty d\la_{j_0}\int_{\R^{N-2}}
\frac{d\la^{\vee}_{k, j_0}}
{\(\sum_{i=1}^N|\la_i|^{H_i}\)^{Q+2}} \\
&\le c\, \int_{\frac1 {\sqrt{N}}}^\infty
\frac{ \(1-\cos( h_k\la_k)\)\la_k^2} {\lambda_k^{2H_k + 1}}\,d\la_k
+ c\, \int_0^1 \la_k^2 \(1-\cos( h_k\la_k)\) d\la_k \\
&\le c_{23}\Big(|h_k|^{2(H_k- 1)} \log \frac 1 {|h_k|} + |h_k|^2\Big),
\end{split}
\end{equation*}
thanks to $H_k > 1$. Combining this with (\ref{Eq:de-inc1})
proves (\ref{Eq:moment}).
\medskip

It follows from (\ref{Eq:moment})) that the Gaussian field
$X'_j = \{X'_j(t), t \in \R^N\}$ has
a continuous version [which will still be denoted by $X'_j$].
Now we define a new Gaussian random field $\widetilde{X} = \{
\widetilde{X}(t), t \in \R^N\}$ by
\begin{equation}\label{def:X-version1}
\widetilde{X}(t) = X(t_1,\cdots, t_{j-1}, 0,t_{j+1},\cdots, t_N)
+\int_0^{t_j} X'_j(t_1,\cdots, t_{j-1}, s_j,t_{j+1},\cdots, t_N)\,
ds_j.
\end{equation}
Then we can verify that $\widetilde{X}$ is a continuous version
of $X$ and, for every $t \in \R^N$, $\widetilde{X}'_j(t) = X'_j(t)$
almost surely. This amounts to verify that for every $t \in \R^N$,
\[
\E\big(\widetilde{X}(t)^2\big) = v(t)\quad \hbox{ and }\quad
\E\Big[\big(\widetilde{X}(t)- X(t)\big)^2\Big] = 0,
\]
which can be proved  by using (\ref{def:X-version1}), Theorem
\ref{Th:deriv} and (\ref{Eq:crosscov}). Since the verification
is elementary, we omit the details. This proves Part (i) of
Theorem \ref{Th:spdiff}.

It remains to prove Part (ii) of Theorem \ref{Th:spdiff}. By applying
Part (i) to $j=1$, we derive that there is a continuous version
$\widetilde{X^{(1)}}$ of $X$ such that $\frac{\partial
\widetilde{X^{(1)}}} {\partial t_1}(t)$
is continuous. Then we apply Part (i) to $\widetilde{X^{(1)}}$ with $j=2$
and obtain a version $\widetilde{X^{(2)}}$ of $\widetilde{X^{(1)}}$
defined by
\begin{equation}\label{def:X-version2}
\widetilde{X^{(2)}}(t) = \widetilde{X^{(1)}}(t_1,0, t_3,\cdots, t_N)
+\int_0^{t_j} \widetilde{X^{(1)}}'_2(t_1,s_2, t_3,\cdots, t_N)\,
ds_2.
\end{equation}
Then $\frac{\partial \widetilde{X^{(2)}}} {\partial t_1}(t)$
and $\frac{\partial \widetilde{X^{(2)}}} {\partial t_2}(t)$
are almost surely continuous. Repeat this ``updating''
procedure for $j = 3, \cdots, N$, we obtain a continuous
version  $\widetilde{X^{(N)}}$ of $X$ such that all first-order
partial derivatives of $\widetilde{X^{(N)}}$ are continuous
almost surely. Hence the sample function of $\widetilde
{X^{(N)}}$ is almost surely differentiable in the sense
of (\ref{def:sample-diff}). The proof of Theorem
\ref{Th:spdiff} is finished.

%we follow the same idea in Adler and Taylor (2007)
% and define the Gaussian random field
%$Y = \{Y(t, h, u): t \in \R^N, h \in \R, u \in {\cal S}_N\}$ by
%\[
%Y(t, h, u) = \left\{\begin{array}{ll}
%\frac{X(t+hu) - X(t)}{h}\qquad &\hbox{ if } h \ne 0\\
%u^T\nabla_X (t) &\hbox{ if } h =0.
%\end{array}
%\right.
%\]

\newpage
\noindent{\large \textbf{Proof of Proposition
\ref{Le:Cre1}}}

By stationarity, we'll have
\begin{equation*}
\begin{split}
\E\(X(x,t)-X(y,s)\)^2&=\E\( X(x,t)\)^2+
\E \(X(y,s)\)^2-2\E\( X(x,t)X(y,s)\)\\
&=2C(0,0)-2C(x-y,t-s).\\
\end{split}
\end{equation*}
And
\begin{equation}\label{Eq:2C0}
\begin{split}
2C(0,0)-2C(x,t)&=2\si^2-\displaystyle\frac{2\si^2}{(1+a|t|^{2\a})^{\be
N/2}}\exp\Big(-\frac{c|x|^{2\ga}}{(1+a |t|^{2\a})^{\be\ga}}\Big)\\
&=2\si^2\displaystyle\frac{(1+a|t|^{2\a})^{\be N/2}-
\exp\Big(-\displaystyle\frac{c|x|^{2\ga}}{(1+a
|t|^{2\a})^{\be\ga}}\Big)}{(1+a|t|^{2\a})^{\be N/2}}.\\
\end{split}
\end{equation}
By using Taylor expansion, we can write (\ref{Eq:2C0}) as
\begin{equation*}
\begin{split}
&2\si^2\displaystyle\frac{1+\frac{\be
N}{2}a|t|^{2\a}+o(|t|^{2\a})-
1+\displaystyle\frac{c|x|^{2\ga}}{(1+a|t|^{2\a})^{\be\ga}}-
o\bigg(\displaystyle\frac{c|x|^{2\ga}}{(1+a|t|^{2\a})^{\be\ga}}\bigg)}
{(1+a|t|^{2\a})^{\be N/2}}\\
&=2\displaystyle\si^2\displaystyle\frac{\frac{\be
N}{2}a|t|^{2\a}+\displaystyle\frac{c|x|^{2\ga}}
{(1+a|t|^{2\a})^{\be\ga}}+o(|t|^{2\a})-
o\bigg(\displaystyle\frac{c|x|^{2\ga}}
{(1+a|t|^{2\a})^{\be\ga}}\bigg)}{(1+a|t|^{2\a})^{\be
N/2}}.
\end{split}
\end{equation*}
Hence we can find positive constants $c_{24} \le c_{25}$
such that
\begin{equation}\label{Eq:C5}
c_{24}\(|x|^{2\ga}+|t|^{2\a}\)\leq 2C(0,0)-2C(x,t)
\leq c_{25}\(|x|^{2\ga}+|t|^{2\a}\)
\end{equation}
for all $(x, t) \in \R^{d+1}$ with $|x|$ and $|t|$ small.
Replace $x$ and $t$ in (\ref{Eq:C5}) with $x-y$ and $t-s$
respectively, then (\ref{Eq:si2}) follows.

To prove (\ref{Eq:si3}), we make use of the fact that for any
Gaussian random vector $(U, V)$ with mean 0,
\[
{\rm Var}(U|V) = \frac{\big(\rho_{U, V}^2 - (\sigma_U^{\,} -
\sigma_V^{\,})^2\big) \big( (\sigma_U + \sigma_V)^2 -
\rho_{U, V}^2 \big)} {4 \sigma_V^2},
\]
where  $\rho_{U, V}^2= \E\big[(U-V)^2\big]$,
$\sigma_U^2 = \E(U^2)$ and $\sigma_V^2 = \E(V^2)$. Let $U= X(x,t)$
and $V = X(y,s)$, we derive
\[
\begin{split}
\Var\(X(x,t)|X(y,s)\)&=\frac{\big[C(0, 0) - C(x-y, t-s)\big]
\big[ C(0, 0) + C(x-y, t-s)\big]}{C(0, 0)}\\
&\ge c_{24}\(|x-y|^{2\ga}+|t-s|^{2\a}\).
\end{split}
\]
This proves (\ref{Eq:si3}). {\qed}

\vspace{.1in}
\noindent{\large \textbf{Proof of Proposition
\ref{Pr:Credim}}}

Eq. (\ref{Eq:Credimr}) follows from Proposition \ref{Le:Cre1} and
Theorem \ref{Th:dim}. Then let's prove  (\ref{Eq:CreGr1}),
where $0<\a\leq\ga \le 1$. By Proposition \ref{Le:Cre1} and
Theorem \ref{Th:dim}, we get
\begin{equation*}
 \dim{\rm Gr} \X([0,1]^{d+1})=\min_{1\leq k\leq
 d+1}\bigg\{\sum_{j=1}^k\frac{\overline{H}_k}{\overline{H}_j}
 +d+1-k+(1-\overline{H}_k)p;\,\,
 \sum_{j=1}^{d+1}\frac{1}{\overline{H}_j}\bigg\},
\end{equation*}
where $\overline{H}_1=\a,\,\overline{H}_2=\cdots
=\overline{H}_{d+1}=\ga$. Denote
\[
S(k)=\sum_{j=1}^k\frac{\overline{H}_k}{\overline{H}_j}
 +d+1-k+(1-\overline{H}_k)p.
\]
We have $S(1)=d+1+(1-\a)p$, $S(k)=d+\frac{\ga}{\a}+(1-\ga)p\triangleq
S$, for $2\leq k\leq d+1$. Also
$\sum_{j=1}^{d+1}\frac{1}{\overline{H}_j}=\frac{1}{\a}
+\frac{d}{\ga}$. Since
\begin{equation}\label{1}
\begin{split}
p<\frac{1}{\a}&\quad\Longleftrightarrow\quad
(\ga-\a)p<\frac{\ga-\a}{\a}\quad\Longleftrightarrow\quad S(1)<S\\
&\quad\Longleftrightarrow\quad
d+\frac{\ga}{\a}+(1-\ga)p<d+\frac{1}{\a}\quad\Longrightarrow\quad
S<\sum_{j=1}^{d+1}\frac{1}{\overline{H}_j},
\end{split}
\end{equation}
and
\begin{equation}\label{2}
p<\frac{1}{\a}+\frac{d}{\ga}\quad\Longleftrightarrow\quad
(1-\ga)p<\frac{1-\ga}{\a}+\frac{d(1-\ga)}{\ga}
\quad\Longleftrightarrow\quad
S<\sum_{j=1}^{d+1}\frac{1}{\overline{H}_j},
\end{equation}
one can see that (\ref{Eq:CreGr1}) follows from (\ref{1}) and (\ref{2}).

If $0<\ga\leq\a\le 1$, then $\overline{H}_1=\cdots
=\overline{H}_d=\ga$ and
$\overline{H}_{d+1}=\a$. So $S(d+1)=\frac{d\a}{\ga}+1+(1-\a)p$,
$S(k)=d+1+(1-\ga)p\triangleq \tilde{S}$, for $1\leq k\leq d$
and
$\sum_{j=1}^{d+1}\frac{1}{\overline{H}_j}
=\frac{1}{\a}+\frac{d}{\ga}$.
Similarly, by comparing these three terms, we will get
(\ref{Eq:CreGr2}). {\qed}

\vspace{.1in} \noindent{\large \textbf{Proof of Proposition
\ref{Pr:Credim2}}}

By Proposition \ref{Le:Cre1} and Theorem \ref{Th:dim2}
we get that when $\frac{1}{\a}+\frac{d}{\ga}<p$, for
every $x\in\p$, $\X^{-1}(x)=\emptyset$ a.s. And also, when
$\frac{1}{\a}+\frac{d}{\ga}>p$, then for any $x\in\p$,
with positive probability
\[
\dim\(\X^{-1}(x)\)=\min_{1\leq k\leq d+1}\bigg\{
\sum_{j=1}^k\frac{\overline{H}_k}{\overline{H}_j}+d+1-
k-\overline{H}_k\, p\bigg\}.
\]
If $0<\a\leq \ga <1$, we have
$\overline{H}_1=\a,\,\overline{H}_2= \cdots
=\overline{H}_{d+1}=\ga$. Denote
\[
T(k)=\sum_{j=1}^k\frac{\overline{H}_k}{\overline{H}_j}
+d+1-k-\overline{H}_k\, p,
\]
then $T(1)=d+1-\a p$, $T(k)=d+\frac{\ga}{\a}-\ga p\triangleq T$,
for $2\leq k\leq d+1$. Since $T(1)<T \, \Leftrightarrow\,
p<\frac{1}{\a},$ (\ref{Eq:Crel}) follows.

If $0<\ga\leq\a<1$, then $\overline{H}_1=\cdots=
\overline{H}_d=\ga$ and $\overline{H}_{d+1}=\a$.
It follows that $T(d+1)=\frac{d\a}{\ga}+1-\a p$
and $T(k)=d+1-\ga
p\triangleq \tilde{T}$, for $1\leq k\leq d$. Since
$$\tilde{T}<T(d+1) \ \Longleftrightarrow\
p<\frac{d}{\ga},$$
we derive (\ref{Eq:Crel2}). The proof is finished.
{\qed}

\vspace{.1in}
\bibliographystyle{plain}
\begin{small}

\end{small}

\end{document}